\sloppy \theoremstyle{plain}
\newtheorem{theorem}{Theorem}[section]
\newtheorem{proposition}[theorem]{Proposition}
\newtheorem{corollary}[theorem]{Corollary}
\newtheorem{lemma}[theorem]{Lemma}
\theoremstyle{definition}
\newtheorem{definition}[theorem]{Definition}
\theoremstyle{remark}
\newtheorem{remark}[theorem]{Remark}
\def\Z{\Bbb Z}
\def\N{\Bbb N}
\def\e{\varepsilon}
\def\nom{\triangleleft}
\def\la{\langle}
\def\ra{\rangle}
\DeclareMathOperator\GL{GL} 
 \DeclareMathOperator\SL{SL}
 \DeclareMathOperator\Mat{M}
\DeclareMathOperator\diag{diag}
\DeclareMathOperator{\iden}{Id}
\DeclareMathOperator{\tr}{tr}
\DeclareMathOperator{\val}{val}
\newcommand{\Rmnum}[1]{\expandafter\@slowromancap\romannumeral #1@}
\title{Words have bounded width in $\SL(n,\Z)$}
\author{Nir Avni}
\address{Nir Avni,
Department of Mathematics, Northwestern University, Evanston IL, USA.}
\email{avni.nir@gmail.com}
\author{Chen Meiri}
\address{Chen Meiri,
Department of Mathematics, Technion, Haifa, Israel.}
\email{chenm@technion.ac.il}
\subjclass{20H05, 11E57}
\keywords{Arithmetic groups, Word width}
\begin{document}

\maketitle

\begin{abstract} We prove two results about width of words in $\SL_n(\mathbb{Z})$. The first is that, for every $n \ge 3$, there is a constant $C(n)$ such that the width of any word in $\SL_n(\mathbb{Z})$ is less than $C(n)$. The second result is that, for any word $w$, if $n$ is big enough, the width of $w$ in $\SL_n(\mathbb{Z})$ is at most 87.

\end{abstract}

\section{Introduction}

A word is an element in a free group. Given a word $w=w(x_1,\ldots,x_d)\in F_d$ and a group $\Gamma$, we have the word map $w:\Gamma ^d \rightarrow \Gamma$ defined by substitution. The set of $w$-values is the set
\[
w(\Gamma) := \left\{ w(g_1,\ldots,g_d), w(g_1,\ldots,g_d) ^{-1} \mid g_i\in \Gamma \right\}.
\]
Sets of word values in many families of groups were extensively studied. See the book \cite{Seg} and the references therein for results on free and hyperbolic groups, nilpotent groups, $p$-adic analytic groups, and general finite groups (the last part is the main ingredient in the proof by Nikolov and Segal of Serre's conjecture that any finite-index subgroup in a finitely generated pro-finite group is open). We briefly describe some of the results that are relevant to this work.

Sets of word values in algebraic groups are large: Borel proved in \cite{Bo} that if $w$ is a non-trivial word and $G$ is a connected simple algebraic group defined over an algebraically closed field $k$, then $w(G(k))$ contains a Zariski-open dense set. For Lie groups, the situation is more complicated. For example, Thom \cite[Corollary 1.2]{Th} and Lindenstrauss (unpublished) proved that  sets of word values in the unitary group $U_n$ can have arbitrarily small radii. Nevertheless, Borel's theorem implies that, for any semisimple Lie group $\mathbf{G}$ and any non-trivial word $w$, the set of word values $w(\mathbf{G})$ contains an open ball. It follows that, if $\mathbf{G}$ is compact, there is a constant $C$ (depending on $\mathbf{G}$ and $w$) such that any element of $\mathbf{G}$ is a product of at most $C$ word values. For arithmetic groups, sets of word values are very mysterious, even for simple words. For example, for every 
$n \ge 3$, the question whether every element of $\SL_n(\mathbb{Z})$ is a commutator is widely open. We do, however, know that the set of commutators in $\SL_n(\mathbb{Z})$ is quite large: Dennis and Vasserstein proved in \cite{DV} that every element in $\SL_n(\mathbb{Z})$ is a product of at most six commutators if $n$ is large enough. 

A remarkable theorem of Larsen and Shalev \cite{LS} says that a stronger statement holds for finite simple groups: for every non-trivial word $w$, if $\Gamma$ is a large enough finite simple group, then every element of $\Gamma$ is a product of two word values. 

Our first result generalizes the theorem of Dennis and Vasserstein's in a form similar to the theorem of Larsen and Shalev:

\begin{theorem} \label{thm:intro.fix.w} There is a constant $C$ with the following property: for any word $w$, there is $n_w$ such that, for all $n>n_w$, every element of $\SL_n(\mathbb{Z})$ is a product of at most $C$ elements of $w(\SL_n(\mathbb{Z}))$. In fact, $C$ can be taken to be equal to 87. 
\end{theorem}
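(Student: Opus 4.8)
The plan is to reduce the statement to two inputs: first, a structural/ergodic statement saying that for any nontrivial word $w$ and any large enough $n$, the set $w(\SL_n(\Z))$ contains a ``rich'' collection of elementary-type elements — concretely, that products of boundedly many conjugates of a single well-chosen $w$-value already generate all elementary matrices $E_{ij}(1)$ (or at least all $E_{ij}(t)$ for a finite set of $t$'s that bounded-boundedly generate), and second, a bounded-generation statement for $\SL_n(\Z)$ in terms of elementary matrices with an \emph{absolute} bound on the number of factors once $n$ is large. For the second input I would invoke the Carter--Keller type bounded elementary generation of $\SL_n(\Z)$ together with the Dennis--Vasserstein refinement quoted in the introduction, arranged so that the number of elementary factors needed is absolute (independent of $n$) for $n$ large; the precise count here is what must be tracked to land on the explicit constant $87$.

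The first input is the heart of the matter, and this is where I expect the main obstacle to be. The idea is: given $w$, choose $d$-tuples $g_1,\dots,g_d\in\SL_n(\Z)$ so that $w(g_1,\dots,g_d)$ is conjugate to (or differs by a bounded product from) an element that, together with a bounded number of its conjugates, produces a single nontrivial elementary matrix $E_{12}(m)$ for some nonzero integer $m$ whose size we can control. To do this for \emph{all} words simultaneously with a \emph{uniform} bound, I would exploit that $\SL_n(\Z)$ for large $n$ contains many commuting copies of $\SL_k(\Z)$ for a fixed small $k$, and use a ``stability'' or ``plethysm'' trick: evaluate $w$ on block-diagonal tuples so that the word value is itself block-diagonal with many independent blocks, then use commutator/conjugation identities across blocks to manufacture a transvection. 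The length-in-$w$-values of an elementary matrix obtained this way is bounded by a universal constant; then replacing each elementary factor in the bounded-generation expression by this bounded-length expression in $w$-values, and collecting constants, yields the bound $C=87$.

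Concretely the steps are: (i) fix the target bounded-elementary-generation expression for $\SL_n(\Z)$, $n$ large, say $g = \prod_{s=1}^{N} E_{i_s j_s}(t_s)$ with $N$ absolute; (ii) show that each single elementary matrix $E_{ij}(t)$ (for the relevant finite set of $t$'s — reduce to $t=1$ by the identity $E_{ij}(t)=\prod E_{ij}(1)$ conjugated, or by $E_{ij}(t)=[E_{ik}(t),E_{kj}(1)]$) is a product of at most $\kappa$ elements of $w(\SL_n(\Z))$ for a universal $\kappa$ and all large $n$; (iii) multiply out: $g$ is a product of at most $N\kappa$ elements of $w(\SL_n(\Z))$, and then optimize the identities in (i) and (ii) to bring $N\kappa$ (or a smarter combined count) down to $87$. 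Step (ii) is the crux: one must produce, uniformly in $w$, a $w$-value supported (after bounded manipulation) near a single root subgroup. I would handle this by the commuting-blocks construction above combined with the observation that a nontrivial word, evaluated on a suitable pair of elements inside an $\SL_2$ or $\SL_3$ block, is nontrivial and unipotent-adjacent after passing to a power or a commutator, using that $\SL_n(\Z)$ surjects onto $\SL_n(\Z/p)$ to guarantee nontriviality of the relevant coordinate. The anticipated difficulty is making step (ii) genuinely uniform in $w$ with a small explicit $\kappa$ — handling the possibility that $w$ is a proper power or lies deep in the lower central series, where one cannot directly read off a transvection and must instead argue via the structure of $w(\SL_n(\Z))$ as a large normal-ish subset and appeal to a Larsen--Shalev-style surjectivity on finite quotients to bootstrap. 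Once uniformity is in hand, the passage to the explicit $87$ is bookkeeping with standard commutator identities for elementary matrices.
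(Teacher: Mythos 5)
Your sketch correctly guesses one ingredient of the paper (evaluating $w$ on block-diagonal copies of $\SL_3(\Z)$ to manufacture unipotent elements cheaply), but the crux, your step (ii), has a genuine gap and, as stated, fails. For a general word $w$ the elements you can extract with a bounded number of $w$-values are elementary-type matrices whose off-diagonal entries are divisible by an integer $q$ depending on $w$ (in the paper: $\langle e_{1,3}(q)\rangle\subseteq w(\SL_3(\Z))^2$, leading to $U_n(\Z;q),L_n(\Z;q)\subseteq w(\SL_n(\Z))^{16}$). Your proposed reduction from $e_{i,j}(t)$ with $q\mid t$ to $e_{i,j}(1)$ via conjugation/commutator identities cannot work: every conjugate of an element of $U_n(\Z;q)\cup L_n(\Z;q)$ lies in the congruence subgroup $\SL_n(\Z;q)$, so no product whatsoever of such elements equals $e_{i,j}(1)$ when $q>1$. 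Hence you cannot plug these expressions into a bounded elementary generation statement for $\SL_n(\Z)$ that uses $e_{i,j}(t)$ with arbitrary $t\in\Z$; at best you can cover a congruence subgroup $\SL_n(\Z;d)$, and producing $e_{i,j}(1)$ as a bounded product of $w$-values is essentially the theorem itself, not a lemma available for free.

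What is missing are exactly the paper's two main inputs. First, to cover a congruence subgroup you need bounded generation of $\langle U_n(\Z;q)L_n(\Z;q)\rangle$ with a bound \emph{uniform in $q$} (and, for the explicit constant, in $n$); this is Theorem \ref{thm:intro.bounded.generation}, proved via an ultraproduct/compactness argument applied to the first-order Carter--Keller--Paige statement plus a Dennis--Vaserstein induction, and sharpened in Section 5 to $E(n,\Z;q)\subseteq L_n(\Z;q)\bigl(\tilde{U}_n(\Z;q)\bigr)^3U_n(\Z;q)$, which gives a congruence subgroup inside $w(\SL_n(\Z))^{80}$. Nothing in your outline addresses this uniformity in $q$. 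Second, to pass from the congruence subgroup to all of $\SL_n(\Z)$ you must handle the finite quotient $\SL_n(\Z/d)$ where $d$ is not squarefree; Larsen--Shalev only covers $\SL_n(\mathbb{F}_p)$, and the paper lifts it to $\SL_n(\mathbb{Z}_p)$ by a $p$-adic open mapping/Newton iteration argument applied to the map $(x,y)\mapsto g^xh^y$ (Lemmas \ref{lem:4.1}--\ref{lem:diff.cc}), yielding $w(\SL_n(\widehat{\mathbb{Z}}))^7=\SL_n(\widehat{\mathbb{Z}})$ and hence the decomposition $87=16\cdot 5+7$. Your passing mention of ``Larsen--Shalev-style surjectivity on finite quotients'' gestures at this but supplies no mechanism for prime-power moduli, so the uniform constant (let alone $87$) is not reachable from the steps you describe.
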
 

In general, we cannot expect the subgroup generated by $w(\SL_n(\mathbb{Z}))$ to be equal to $\SL_n(\mathbb{Z})$. We define the width of $w$ in $\SL_n(\mathbb{Z})$ to be the minimum of the numbers $C$ such that any element of $\langle w(\SL_n(\mathbb{Z})) \rangle$ is a product of at most $C$ elements of $w(\SL_n(\mathbb{Z}))$. If no such number exists, we say that the width of $w$ is infinite.

Our next theorem provides uniform bounds for width, for a fixed $n$:

\begin{theorem}\label{thm:intro.fix.n} For any $n \geq 3$ there is an integer $C=C(n)$ such that, for any word $w$, the width of $w$ in $\SL_n(\mathbb{Z})$ is less than $C$.
\end{theorem}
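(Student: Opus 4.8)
\emph{Reduction.} Write $N=\langle w(\SL_n(\mathbb Z))\rangle$. If every value of $w$ is central, then $N\subseteq\{\pm I\}$ and the width is at most $1$; otherwise some value $v_0=w(g_1,\dots,g_d)$ is non-central, and we must show that then every element of $N$ is a product of at most $C(n)$ word values, with $C(n)$ depending only on $n$.

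\emph{Extracting a transvection, and a relative elementary subgroup.} Starting from the non-central $v_0$, we drive it to a non-trivial elementary matrix by a bounded chain of commutators: for $x_1,\dots,x_k\in\SL_n(\mathbb Z)$ the iterated commutator $[v_0,x_1,\dots,x_k]$ expands as a product of $2^k$ conjugates of $v_0^{\pm1}$, and each such conjugate $v_0^{x}=w(g_1^{x},\dots,g_d^{x})$ is again a word value; a commutator/row-reduction argument valid for $n\ge3$ shows that, for a suitable choice with $k\le k_0(n)$, the result is a non-trivial elementary matrix, which after conjugating by a signed permutation matrix (still a word value) we may take to be $e_{12}(c)$ for some $c\ne0$, where $e_{ij}(a)=I+aE_{ij}$ ($E_{ij}$ the matrix unit). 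Hence, for every $i\ne j$, $e_{ij}(\pm c)$ is a product of at most $C_1(n)$ word values, and so is each conjugate $g\,e_{ij}(c)\,g^{-1}$; these generate the relative elementary subgroup $E:=E_n(\mathbb Z,c\mathbb Z)$, the normal closure in $\SL_n(\mathbb Z)$ of the $e_{ij}(c)$, which therefore lies in $N$, and by the uniform bounded generation of relative elementary subgroups of $\SL_n(\mathbb Z)$ for $n\ge3$ — every element of $E_n(\mathbb Z,c\mathbb Z)$ is a product of at most $C_2(n)$ of these conjugates, $C_2(n)$ independent of $c$ — every element of $E$ is a product of at most $C_1(n)C_2(n)$ word values.

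\emph{The finite congruence quotient.} By the congruence subgroup property for $\SL_n(\mathbb Z)$, $n\ge3$ (Bass--Milnor--Serre, Mennicke), $E_n(\mathbb Z,c\mathbb Z)=\Gamma(c)$, so it is normal of finite index with $\SL_n(\mathbb Z)/E\cong\SL_n(\mathbb Z/c)$, and $N/E$ is the verbal subgroup $\langle w(\SL_n(\mathbb Z/c))\rangle$. Writing $c=\prod_p p^{e_p}$, this verbal subgroup is $\prod_p\langle w(\SL_n(\mathbb Z/p^{e_p}))\rangle$, and the width of $w$ in it is the maximum of its widths in the factors $\SL_n(\mathbb Z/p^{e_p})$. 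It therefore suffices to bound the width of $w$ in $\SL_n(\mathbb Z/p^e)$ by a constant $C_3(n)$ independent of $p,e$: granting this, any $x\in N$ maps to a product of at most $C_3(n)$ word values of $\SL_n(\mathbb Z/c)$, each of which lifts to a word value of $\SL_n(\mathbb Z)$, so $x$ is a product of at most $C_3(n)$ word values times an element of $E$, the latter a product of at most $C_1(n)C_2(n)$ word values; thus $C(n):=C_1(n)C_2(n)+C_3(n)$ works. For each $\SL_n(\mathbb Z/p^e)$ one argues as above: either the relevant verbal subgroup is central, of order at most $n$, so its width is at most $1$, or it contains a non-central word value, from which a bounded chain of commutators yields a non-trivial elementary matrix and then the bounded generation of $\SL_n(\mathbb Z/p^e)$ by elementary matrices (uniform in $p,e$) concludes; for $e\ge2$ one must, in addition, bound the width of $w$ in the $p$-group $\Gamma(p)/\Gamma(p^e)$ uniformly in $p$ and $e$, via the fact that it is a finite quotient of the $p$-adic analytic group $\Gamma(p)\le\SL_n(\mathbb Z_p)$, in which words have finite width.

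\emph{The main obstacle.} The crux is the \emph{uniformity} of the extraction and generation steps: that a non-trivial elementary matrix can be produced from an arbitrary non-central word value — including a torsion one — in a number of commutator steps bounded in terms of $n$ alone, uniformly over all words; that relative elementary subgroups $E_n(\mathbb Z,c\mathbb Z)$ are boundedly generated with a bound independent of $c$; and that the width of $w$ in the finite groups $\SL_n(\mathbb Z/p^e)$, especially over the non-semisimple part when $e\ge2$, is bounded independently of $p$ and $e$. These uniform bounds are the technical heart of the theorem.
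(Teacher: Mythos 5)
Your overall reduction is sound and in fact runs parallel to the paper's strategy: produce a nontrivial elementary matrix out of boundedly many word values, deduce that a principal congruence subgroup is covered by boundedly many word values, and then bound the width of $w$ in the finite quotients $\SL_n(\mathbb{Z}/p^e)$ uniformly. The problem is that the two steps you invoke as known facts are exactly the technical content of the paper, and one of your justifications cannot work even in principle. The ``uniform bounded generation of relative elementary subgroups of $\SL_n(\mathbb{Z})$ with a bound independent of $c$'' is not an off-the-shelf result: it is Theorem \ref{thm:intro.bounded.generation} (or, for fixed $n$, Proposition \ref{claim:bdd.gen.3}), which the paper proves by combining Tits' proposition, the Carter--Keller--Paige first-order property of $\mathbb{Z}$ in Witte Morris's formulation, an ultrapower compactness argument to obtain uniformity in the ideal, and a Dennis--Vaserstein induction. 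The statement with a bound depending on $c$ is easy but useless here, so quoting the uniform version without proof leaves the heart of the argument missing. (A smaller issue of the same kind: your ``bounded chain of commutators'' extracting $e_{12}(c)$ from an arbitrary non-central value $v_0$ needs an argument; the paper sidesteps it by evaluating $w$ on a free subgroup of $\SL_2(\mathbb{Z})\subset\SL_3(\mathbb{Z})$ and commuting the resulting $g\neq\pm I_2$ with $\langle e_{1,3}(1),e_{2,3}(1)\rangle$, which lands directly in a rank-one unipotent conjugate to $e_{1,3}(q)$.)

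The more serious gap is the finite-quotient step. Theorem \ref{thm:intro.fix.n} requires $C(n)$ independent of the word, so your $C_3(n)$ must be uniform in $p$, $e$ \emph{and} $w$ simultaneously. The theorem that words have finite width in compact $p$-adic analytic groups gives a bound depending both on $w$ and on $p$, so it cannot produce such a $C_3(n)$; and ``bounded generation of $\SL_n(\mathbb{Z}/p^e)$ by elementary matrices'' does not apply when the elementary matrix you extract has parameter divisible by $p$, since its conjugates then generate only a congruence-type subgroup, not the verbal subgroup you must fill. This is precisely where the paper does real work: Lemma \ref{lem:width.adelic.fix.n} shows that $X^{C(n)}=\langle X\rangle$ for \emph{every} symmetric, conjugation-invariant subset $X\subseteq\SL_n(\mathbb{Z}_p)$, uniformly in $p$ (uniformity in $w$ is obtained by taking $X=w(\SL_n(\mathbb{Z}_p))$), and its proof needs the Liebeck--Shalev diameter bound for $\SL_n(\mathbb{F}_p)$, Lemma \ref{lem:ad.irr} (every element of $\mathfrak{sl}_n(\mathbb{F}_p)$ is a sum of boundedly many conjugates of any non-central element, uniformly in $p$), and the quantitative $p$-adic open mapping Lemma \ref{lem:open.mapping}. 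None of this machinery is replaced by your citation, so as written the proposal establishes only the easy reduction and leaves the theorem's two uniform ingredients unproved.
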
 

\begin{remark} Theorems \ref{thm:intro.fix.w} and \ref{thm:intro.fix.n} are optimal in the following sense: for every $C$ there are infinitely many pairs $(n,w)$ such that the width of $w$ in $\SL_n(\mathbb{Z})$ is greater than $C$. This easily follows from \cite[Theorem 1]{Lub}.
\end{remark} 

We do, however, have the following result which is uniform in $n$ and $w$:

\begin{theorem} \label{thm:intro.congruence.subgroup} There is a constant $C$ such that, for every any non-trivial word $w$, there is $d=d(w)\in \mathbb{Z}$ such that for every $n \ge 3$, every element of the $d$-congruence subgroup $\SL_n(\mathbb{Z};d)$ is a product of at most $C$ elements of $w(\SL_n(\mathbb{Z}))$. If $n$ is large enough, $C$ can be taken to be equal to 80.
\end{theorem}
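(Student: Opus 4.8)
The argument has three ingredients: a reduction to a single elementary matrix, a commutator trick that handles arbitrary exponents uniformly, and a ``seed lemma'' --- the heart --- realizing one elementary matrix as a short product of $w$-values. Throughout write $e_{ij}(t)=\iden+t\,e_{ij}$ and use that $w(\SL_n(\mathbb Z))$ is closed under conjugation by $\SL_n(\mathbb Z)$ and under inversion.

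\textbf{Reduction to one elementary matrix.} For $n\ge 3$, the normal subgroup theorem together with the congruence subgroup property already show that $\langle w(\SL_n(\mathbb Z))\rangle$ contains some congruence subgroup; the point of the theorem is uniformity of the level and of the width. I would fix $q=q(w)$ once and for all (produced by the seed lemma below) and reduce the statement to: every relative elementary matrix $e_{ji}(b)\,e_{ij}(q)\,e_{ji}(-b)$ and all of its powers is a product of a bounded number of word values. For this reduction one uses a bounded-generation statement for $\SL_n(\mathbb Z;q)=E_n(\mathbb Z,q\mathbb Z)$ that is \emph{uniform in $n$} --- every element a product of at most $L$ conjugates of $\langle e_{12}(q)\rangle$, with $L$ absolute. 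This is where $n$ is taken large; obtaining such uniformity, in place of the $n$-dependent Carter--Keller-type bounds, is itself a technical core. For the qualitative version --- some finite $C$ --- the classical $n$-dependent bounded generation already suffices.

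\textbf{Commutator trick.} Since $n\ge 3$ one has $e_{13}(qm)=[e_{12}(q),e_{23}(m)]$, and crucially $e_{23}(m)$ need not be a word value. Writing $e_{12}(q)=v_1\cdots v_K$ with $v_i\in w(\SL_n(\mathbb Z))$ and expanding $[v_1\cdots v_K,z]$ as a product of $K$ conjugates of commutators of the form $[v_i,z]=v_i\cdot{}^{z}(v_i^{-1})$ --- each of which is a product of two word values --- shows that $e_{13}(qm)$, hence after further conjugation every $e_{ji}(b)\,e_{ij}(qm)\,e_{ji}(-b)$, is a product of at most $2K$ word values, uniformly in $m,b,i,j$ and $n$. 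Combined with the reduction above, every element of $\SL_n(\mathbb Z;q)$ is then a product of at most $2KL$ word values.

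\textbf{The seed lemma, and the main obstacle.} It remains to produce, for each non-trivial $w$, an integer $q=q(w)$, a fixed $n_0$, and an absolute $K$ such that $e_{12}(q)$ lies in a product of at most $K$ elements of $w(\SL_{n_0}(\mathbb Z))$, realized using only matrices supported on a fixed block (so the identity persists in every $\SL_n$, $n\ge n_0$). I would fix $b_1,\dots,b_k\in\SL_2(\mathbb Z)$ generating a free group of rank $k$ and set $h:=w(b_1,\dots,b_k)$; as $w\ne1$ and free groups are torsion-free, $h\ne\pm\iden$, so either $|\tr h|=2$ and some $e_{12}(q)$ lies in $w(\SL_{n_0}(\mathbb Z))^{\cdot2}$ (because $h$ or $h^2$ is $\SL_2(\mathbb Z)$-conjugate to an $e_{12}(q)$), and we are done, or $h$ is hyperbolic with an eigenvalue that is a unit $\lambda$ in a real quadratic order $\mathcal O$. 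In the hyperbolic case the idea is to trade the absence of non-trivial integral diagonal matrices for the infinite unit group of $\mathcal O$: through the restriction-of-scalars embedding $\SL_2(\mathcal O)\hookrightarrow\SL_4(\mathbb Z)$, suitable $\SL_n(\mathbb Z)$-conjugates of $h$ and their commutators with elementary matrices produce $e_{12}(a)$ for $a$ ranging over the fixed ideal of $\mathcal O$ generated by $\{\lambda^{2m}-1\}$ --- principal, and controlled by $(\tr h)^2-4$, since $\lambda$ is a unit --- after which a bounded B\'ezout/Mennicke-symbol manipulation isolates $e_{12}(q)$ with $q\in\mathbb Z$ dividing $(\tr h)^2-4$. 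The crux is that $K$ and the arithmetic quality of $q$ must be made independent of $\mathcal O=\mathcal O(w)$; I expect this, together with the uniform-in-$n$ bounded generation of the first step, to be the main difficulty, resolved either by a bounded-elementary-generation theorem for $\SL_2$ of $S$-integer rings that is uniform over such rings, or by arranging (via a cleverly chosen embedding) that $h$ always lands in one fixed auxiliary ring. The explicit bound $80$ then follows by optimizing the three multiplicities $K$, $2$ and $L$ with the sharper inputs available for large $n$.
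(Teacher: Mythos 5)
There is a fatal gap in your reduction step. You want a bounded-generation statement uniform in $n$ of the form: every element of $\SL_n(\Z;q)$ is a product of at most $L$ conjugates of elements of $\langle e_{1,2}(q)\rangle$, with $L$ absolute. No such $L$ exists: each factor $x\,e_{1,2}(qm)\,x^{-1}$ lies in the (closure of the) variety of transvection powers, which has dimension $2n-1$, so a product of $L$ of them lies in a constructible set of dimension at most $L(2n-1)$; since $\SL_n(\Z;q)$ is Zariski dense in $\SL_n$, covering it forces $L\geq (n^2-1)/(2n-1)\sim n/2$. So the quantity you flag as ``itself a technical core'' is not merely unproved but false for large $n$, and your further remark that the classical $n$-dependent bounded generation suffices for the qualitative version contradicts the statement being proved, which demands a single $C$ valid for all $n\ge 3$ and all $w$. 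The paper circumvents exactly this obstruction by using much larger building blocks: it proves (Theorem \ref{thm:intro.bounded.generation}, via Carter--Keller--Paige plus an ultrapower compactness argument for $n=3$ and a Dennis--Vaserstein induction on $n$) that $(U_n(\Z;q)L_n(\Z;q))^C\supseteq \SL_n(\Z;q^2)$ with $C$ absolute, and correspondingly it must and does show that the \emph{entire groups} $U_n(\Z;q)$ and $L_n(\Z;q)$ --- not just single elementary matrices --- lie in $w(\SL_n(\Z))^{16}$, using block-diagonal embeddings of $\SL_3(\Z)^{\lfloor n/3\rfloor}$ and the conjugacy of all matrices in $U_n(\Z;q)$ with constant superdiagonal $q$ (Lemma \ref{lem:q.above.diagonal}). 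Your proposal has no analogue of this step, and without it no absolute $C$ can come out.

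Your seed lemma is also both harder and weaker than needed. The parabolic case is fine, but the hyperbolic case (restriction of scalars from a $w$-dependent real quadratic order, ideals generated by $\lambda^{2m}-1$, Mennicke-symbol manipulations, and uniformity over $\mathcal{O}(w)$) is left unresolved, and you correctly identify it as the crux --- but it is unnecessary. The paper's seed avoids any case analysis: embed $\SL_2(\Z)$ in the upper-left corner of $\SL_3(\Z)$, pick a $w$-value $g\neq \pm I_2$ (possible since $\SL_2(\Z)$ contains a nonabelian free group and nontrivial elements of a free group are non-torsion), and commute it with the abelian column group $\langle e_{1,3}(1),e_{2,3}(1)\rangle$, which $g$ normalizes; then $[g,h]\neq 1$ is unipotent, conjugate to $e_{1,3}(q)$, and $[g,h^m]=[g,h]^m\in w(\SL_3(\Z))^2$, so the whole cyclic group $\langle e_{1,3}(q)\rangle$ sits in $w(\SL_3(\Z))^2$ regardless of whether $g$ is parabolic or hyperbolic. (Your expansion of $[v_1\cdots v_K,z]$ into $2K$ word values is correct but is subsumed by this.) Finally, the explicit constant $80$ is not obtained by ``optimizing multiplicities'': it comes from the Dennis--Vaserstein-type decomposition $E(n,\Z;q)\subseteq L_n(\Z;q)\,\tilde U_n(\Z;q)^3\,U_n(\Z;q)$ for large $n$ (Corollary \ref{cor:explicit}), five factors each contained in $w(\SL_n(\Z))^{16}$.
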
 

\begin{remark} Let $O$ be the ring of integers in a number field, let $S$ be a finite set of primes of $O$, and let $O_S$ denote the localization of $O$ by $S$. The proofs below also show similar bounds for $\SL_n(O_S)$, but the bounds obtained by these proofs depend on $O_S$. While we do not know whether widths of words in $\SL_n(O_S)$ are bounded uniformly in $O_S$, Corollary 4.6 of \cite{MRS} gives some indication that this is indeed the case. 
In another direction, we do not even know whether words in other higher-rank non-uniform lattices (especially non-split) have finite width. We exclude lattices of rank one from the discussion since these include free groups and hyperbolic groups for which the the width of every non-trivial word is infinite, see \cite{MN}.
\end{remark} 

\begin{remark} Let $\Gamma$ be an irreducible arithmetic lattice in a higher-rank semisimple group $G$, and assume that there is a compact semisimple Lie group $K$ and a dense embedding $\pi : \Gamma \hookrightarrow K$ (this implies that $\Gamma$ is cocompcat in $G$). By the result of Thomas and Lindenstrauss mentioned above, there are words $w\in F_2$ such that $\pi (w(\Gamma))$ is contained in an arbitrarily small neighborhood of the identity. It follows that the width of $w$ can be arbitrarily large. This means that the analog of Theorem \ref{thm:intro.fix.n} fails for $\Gamma$. Noting that the image under $\pi$ of any finite-index subgroup of $\Gamma$ is dense, we get that Theorem \ref{thm:intro.congruence.subgroup} also fails. We do not know whether every word has finite width in higher-rank cocompact lattices, nor whether the analog of Theorems \ref{thm:intro.fix.w} holds for the class of cocompact lattices.
\end{remark} 

We briefly sketch the proofs of the main theorems. For $n \geq 2$ and $q\in \mathbb{Z}$, denote by $U_n(\mathbb{Z};q)$ the subgroup of all unipotent upper triangular matrices in $\SL_n(\mathbb{Z})$ whose off-diagonal entries are divisible by $q$. Denote similarly $L_n(\mathbb{Z};q)$, replacing upper triangular by lower triangular. Finally, for a group $G$, a subset $X \subset G$, and a natural number $n$, we denote $X^n=\left\{ x_1 \cdots x_n \mid x_i\in X\cup \left\{ 1 \right\} \right\}$.

The main step is to prove the following theorem:

\begin{theorem} \label{thm:intro.bounded.generation} There is a constant $C$ such that, for any $n \geq 3$ and any $q\in \mathbb{Z}$, $(U_n(\mathbb{Z};q)L_n(\mathbb{Z};q))^C$ is a finite-index subgroup of $\SL_n(\mathbb{Z})$.
\end{theorem}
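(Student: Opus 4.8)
The plan is to produce a constant $C$, independent of both $n$ and $q$, for which
\[
\bigl(U_n(\mathbb{Z};q)\,L_n(\mathbb{Z};q)\bigr)^{C}=\bigl\langle U_n(\mathbb{Z};q),\,L_n(\mathbb{Z};q)\bigr\rangle ,
\]
and to observe separately that the group on the right has finite index in $\SL_n(\mathbb{Z})$. Since any subset of $\SL_n(\mathbb{Z})$ that contains $U_n(\mathbb{Z};q)$ and $L_n(\mathbb{Z};q)$ and equals the subgroup they generate is automatically a subgroup, this yields the theorem. The finite-index assertion is standard: $\langle U_n(\mathbb{Z};q),L_n(\mathbb{Z};q)\rangle$ is the subgroup generated by all $e_{ij}(qa)$ with $i\neq j$ and $a\in\mathbb{Z}$, and for $n\ge 3$ a commutator calculation with the Steinberg relations shows that it contains a principal congruence subgroup $\SL_n(\mathbb{Z};m)$ with $m=m(q)$. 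So the real content is the \emph{bounded} generation of $\langle U_n(\mathbb{Z};q),L_n(\mathbb{Z};q)\rangle$ by the two subgroups $U_n(\mathbb{Z};q)$ and $L_n(\mathbb{Z};q)$, with a bound uniform in $n$ and $q$.

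I would prove this in two stages, first for $q=1$. The crucial input --- which I would establish following the Carter--Keller method --- is a Gauss (unitriangular) factorization of $\SL_n(\mathbb{Z})$ whose number of factors does not grow with $n$: an absolute $\nu$ with $\SL_n(\mathbb{Z})=\bigl(U_n(\mathbb{Z})L_n(\mathbb{Z})\bigr)^{\nu}$ for all $n\ge 3$. The mechanism is that a single left multiplication by an element of $L_n(\mathbb{Z})$ replaces a column of a matrix by an arbitrary $\mathbb{Z}$-combination of its rows --- in particular it can make the last entry of a column equal to the $\gcd$ of that column --- and a subsequent left multiplication by an element of $U_n(\mathbb{Z})$ clears the rest of that column; running a bounded number of such ``rounds'' of row and column operations (each round being one full unitriangular matrix, exactly as in the Carter--Keller treatment of bounded generation) reduces any element of $\SL_n(\mathbb{Z})$ to a signed permutation matrix. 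Finally any signed permutation matrix lies in $\bigl(U_n(\mathbb{Z})L_n(\mathbb{Z})\bigr)^{O(1)}$: every permutation is a product of two involutions, and an involution matrix (after folding in the signs needed for determinant one) is a product of $2\times 2$ blocks with pairwise disjoint, hence commuting, supports, so it lies in $U_n(\mathbb{Z})L_n(\mathbb{Z})U_n(\mathbb{Z})$ because it does so one block at a time.

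The second stage runs the same argument ``modulo the ideal $q\mathbb{Z}$''. Commutator identities such as $[e_{ij}(qa),e_{jk}(qb)]=e_{ik}(q^{2}ab)$, together with the normal-generation facts above, show that a bounded number of products of elements of $U_n(\mathbb{Z};q)\cup L_n(\mathbb{Z};q)$ already produce a principal congruence subgroup; then the rounds of row and column operations from the first stage are repeated, starting from a matrix congruent to the identity modulo a high enough power of $q$, with every unitriangular factor kept congruent to the identity modulo $q$, which is possible because the corrections introduced at each step are themselves divisible by the relevant power of $q$. Combining the two stages gives $\bigl(U_n(\mathbb{Z};q)L_n(\mathbb{Z};q)\bigr)^{C}=\langle U_n(\mathbb{Z};q),L_n(\mathbb{Z};q)\rangle$ with $C$ absolute.

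The principal difficulty is keeping every constant independent of $n$ and of $q$ simultaneously. Independence of $n$ forces the use of the maximal unipotent subgroups $U_n,L_n$ rather than individual root subgroups --- a bounded product of root subgroups cannot even touch all $n$ coordinates --- and hence the need for a version of bounded elementary generation of $\SL_n(\mathbb{Z})$ organized into a bounded number of unitriangular rounds. Independence of $q$ is subtler: the relative bounded-generation statement must not deteriorate as the number of prime factors of $q$ grows, so the $\gcd$ and column-clearing steps have to avoid Mennicke-symbol-type obstructions; equivalently, one must bound --- independently of $n$ and $q$ --- the number of factors needed to generate the finite nilpotent quotient $\langle U_n(\mathbb{Z};q),L_n(\mathbb{Z};q)\rangle/\SL_n(\mathbb{Z};m)$, assembled from unipotent radicals of $\SL_n$ over $\mathbb{Z}/m$, by the images of $U_n(\mathbb{Z};q)$ and $L_n(\mathbb{Z};q)$. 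Everything beyond that is numerical bookkeeping to keep $C$ finite and, for the applications in the introduction, small.
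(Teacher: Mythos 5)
Your reduction of the theorem to the statement $\bigl(U_n(\mathbb{Z};q)L_n(\mathbb{Z};q)\bigr)^{C}=\langle U_n(\mathbb{Z};q),L_n(\mathbb{Z};q)\rangle$ with $C$ absolute, plus the standard observation that this group contains a principal congruence subgroup (Tits' relative commutator formula together with Mennicke's theorem), is the right framing and matches the paper. The gap is that the core statement itself --- bounded width, \emph{uniformly in $q$} and with a constant independent of $n$ --- is not actually proved by your two stages. Stage 2 is where the theorem lives, and there you only restate the difficulty: ``repeat the rounds of row and column operations \ldots with every unitriangular factor kept congruent to the identity modulo $q$'' is precisely a relative bounded-generation claim, and no mechanism is given for why the number of rounds does not grow with $q$ (or with the number of prime divisors of $q$). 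Already for $q=1$, reducing an arbitrary element of $\SL_n(\mathbb{Z})$ to a signed permutation matrix in a number of unitriangular rounds bounded independently of the matrix is not a consequence of Gaussian elimination over $\mathbb{Z}$ (the $2\times 2$ Euclidean algorithm has unbounded length); it requires the Carter--Keller machinery of Mennicke symbols and Dirichlet's theorem, which you invoke only by name. In the relative setting the known Carter--Keller--Paige arguments give bounds whose uniformity in the ideal is exactly the delicate point; keeping the multipliers in $q\mathbb{Z}$ during the gcd and column-clearing steps runs into the Mennicke-symbol obstructions you yourself mention, and saying the problem is ``equivalent to bounding the number of factors needed to generate the quotient by a congruence subgroup'' is a reformulation, not an argument.

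For comparison, the paper does not attempt any explicit elimination uniform in $q$. For fixed $n$ it uses the Carter--Keller--Paige theorem in the form of a single first-order sentence $\varphi$ true in $\mathbb{Z}$ (so true in every ultrapower), which bounds $[\SL_n(A;I):E^{\triangleleft}(n,A;I)]$ by $2\cdot 8!$; an ultraproduct compactness argument (Corollary \ref{claim:bdd.gen.2}) then converts this into a covering of $\SL_n(\mathbb{Z};q^2)$ by boundedly many translates of $\bigl(U_n(\mathbb{Z};q)L_n(\mathbb{Z};q)\bigr)^{C(n)}$ with $C(n)$ independent of $q$, and Lemmas \ref{lem:double}--\ref{lem:generation} upgrade the covering to the set being a group (Proposition \ref{claim:bdd.gen.3}). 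This step is deliberately non-constructive, which is how the $q$-uniformity is obtained. The independence of the constant from $n$ is then not extracted from an $n$-uniform elimination scheme at all, but from a Dennis--Vaserstein-style stability lemma (Lemma \ref{lem:up}, using the generalized Tits statement of Lemma \ref{lemma: Gen.Tits} and Mennicke's theorem): once the identity $\bigl(U_3 L_3\bigr)^{D}=\langle U_3L_3\rangle$ holds for $n=3$, the same $D$ works for every larger $n$. If you want to salvage your approach, you would need either a genuinely uniform-in-$q$ relative version of Carter--Keller (which is the content you are assuming), or some substitute for the compactness step; as written, the proposal assumes the theorem's hard core rather than proving it.
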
 

Theorem \ref{thm:intro.bounded.generation} is proved by induction on $n$ in \S\ref{sec:ntro.bounded.generation}. The case $n=3$  is essentially due to Carter, Keller, and Paige (see \cite{WM} for an exposition of the proof). The
argument for  induction step follows Dennis and Vaserstein \cite{DV}.

Given Theorem \ref{thm:intro.bounded.generation}, we deduce Theorem \ref{thm:intro.congruence.subgroup} without the explicit bound on $C$ in \S\ref{sec:congruence}. A short arjument implies that $w(\SL_3(\mathbb{Z}))^2$ contains an elementary matrix. Using various embeddings of $\SL_3(\mathbb{Z})$ into $\SL_n(\mathbb{Z})$, we show that $w(\SL_n(\mathbb{Z})^{C'}$ contains $U_n(\mathbb{Z};q)$ and $L_n(\mathbb{Z};q)$ for some $C$ and $q$. Theorems \ref{thm:intro.fix.w} and \ref{thm:intro.fix.n} follow from Theorem \ref{thm:intro.congruence.subgroup}, a $p$-adic open mapping theorem, and the Larsen--Shalev theorem \cite{LS}.

\subsection{Acknowledgements} The authors thank Andrei Rapinchuk and Ariel Karelin for helpful conversations.
They are also thankful to the anonymous referee for improving the bounds of Theorems \ref{thm:intro.fix.w} and \ref{thm:intro.congruence.subgroup} and for simplifying the proof of Lemma \ref{lem:ad.irr}. 
N.A. was partially supported by NSF grant DMS-1303205 and BSF grant 2012247. C.M. was partially supported by BSF grant 2014099 and ISF grant 662/15.

\section{Proof of Theorem \ref{thm:intro.bounded.generation}}\label{sec:ntro.bounded.generation}

In this section, we prove Theorem \ref{thm:intro.bounded.generation}. We start by setting up the notations and recalling some facts. 

\begin{definition} Let $A$ be a commutative ring with unit, let $I$ be an ideal in $A$, and let $n \geq 2$ be an integer. \begin{enumerate}
\item $\SL_n(A;I)$ is the subgroup of $\SL_n(A)$ consisting of the matrices which are congruent to the identity matrix  modulo the ideal $I$. The subgroup $\SL_n(A;I)$ is called the $I$-congruence subgroup of $\SL_n(A)$.
\item $U_n(A;I)$ is the subgroup of $\SL_n(A;I)$ consisting of unipotent upper triangular matrices.
\item $L_n(A;I)$ is the subgroup of $\SL_n(A;I)$ consisting of unipotent lower triangular matrices.
\item In the case where $A=\Z$ and $I=q\Z$ we sometimes write
$\SL_n(\Z;q)$, $U_n(\Z;q)$ and $L_n(\Z;q)$ instead of $\SL_n(\Z;I)$, $U_n(\Z;I)$ and $L_n(\Z;I)$.

\end{enumerate} 
\end{definition} 

\begin{definition} Let $A$ be a commutative ring with a unit, let $I$ be an ideal in $A$, and let $n \geq 2$ be an integer. \begin{enumerate}
\item For $x\in A$ and $1 \leq i \neq j \leq n$, let $e_{i,j}(x)$ denote the $n$-by-$n$ matrix with 1s along the diagonal, $x$ in the $(i,j)$-entry, and zero in all other entries.
\item Denote by $E(n,A;I)$ the subgroup generated by the elementary matrices $e_{i,j}(x)$, for $x\in I$. We will write $E(n,A)$ instead of $E(n,A;A)$.
\item Denote by $E^\triangleleft(n,A;I)$ the normal subgroup of $E(n,A)$ generated by $E(n,A;I)$.
\item In the case where $A=\Z$ and $I=q\Z$ we sometimes write
$E(n,\Z;q)$ and $E^\triangleleft(n,\Z;q)$ instead of $E(n,\Z;I)$  and $E^\triangleleft(n,\Z;I)$.
\end{enumerate} 
\end{definition}

The following is \cite[Proposition 2]{Tits}:

\begin{proposition}[Tits] \label{prop:Tits} If $A$ is a commutative ring, $I$ is an ideal of $A$, and $n \geq 3$, then $E^{\triangleleft}(n,A;I^2) \subseteq \langle U_n(A ; I) \cup L_n(A ; I)\rangle$.
\end{proposition}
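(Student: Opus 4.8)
The plan is to reduce the inclusion to a short computation with the Steinberg (Chevalley) commutator relations for elementary matrices, using in an essential way that $n\ge 3$ supplies a third index. Throughout write $H:=\langle U_n(A;I)\cup L_n(A;I)\rangle$, and recall the relations $[e_{i,j}(s),e_{j,k}(t)]=e_{i,k}(st)$ for pairwise distinct $i,j,k$, and $[e_{i,j}(s),e_{k,l}(t)]=1$ whenever $j\neq k$ and $i\neq l$. First I record the trivial inclusion $E(n,A;I)\subseteq H$: an elementary matrix $e_{i,j}(x)$ with $x\in I$ is unipotent upper triangular and congruent to the identity modulo $I$ when $i<j$, and lower triangular when $i>j$, so it lies in $U_n(A;I)$ or in $L_n(A;I)$.

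The key reduction is a standard description of the relative elementary normal subgroup: for $n\ge 3$ and any ideal $J\subseteq A$, the group $E^{\triangleleft}(n,A;J)$ is generated by the elements $e_{i,j}(a)\,e_{j,i}(x)\,e_{i,j}(-a)$ with $i\neq j$, $a\in A$, $x\in J$ (the case $a=0$ already recovers a generating set of $E(n,A;J)$). This classical fact is proved by induction on the length of the conjugating word in $E(n,A)$; the only step not immediate from the two commutator relations above is conjugation by an ``opposite'' elementary $e_{j,i}(c)$, which is handled by passing to a third index $k\notin\{i,j\}$, and hence needs $n\ge 3$. I would invoke this fact with $J=I^2$; alternatively one can fold its proof into the argument below, since the delicate case is disposed of by the very same device.

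Granting this, it remains to show that each generator $\nu=e_{i,j}(a)\,e_{j,i}(x)\,e_{i,j}(-a)$ with $x\in I^2$ lies in $H$. Since $t\mapsto e_{j,i}(t)$ is additive, $I^2$ is spanned by products $bc$ with $b,c\in I$, and conjugation by $e_{i,j}(a)$ distributes over such products, it suffices to treat $x=bc$ with $b,c\in I$. Choose $k\notin\{i,j\}$ (here $n\ge 3$ is used) and write $e_{j,i}(bc)=[e_{j,k}(b),e_{k,i}(c)]$; conjugating by $e_{i,j}(a)$ and using that conjugation is an automorphism,
\[
\nu=\bigl[\,e_{i,j}(a)\,e_{j,k}(b)\,e_{i,j}(-a)\ ,\ e_{i,j}(a)\,e_{k,i}(c)\,e_{i,j}(-a)\,\bigr].
\]
The commutator relations give $e_{i,j}(a)\,e_{j,k}(b)\,e_{i,j}(-a)=e_{i,k}(ab)\,e_{j,k}(b)$ and $e_{i,j}(a)\,e_{k,i}(c)\,e_{i,j}(-a)=e_{k,j}(-ca)\,e_{k,i}(c)$. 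The four elementary matrices occurring have off-diagonal entries $ab,\,b,\,ca,\,c$, all in $I$, so both arguments of the commutator lie in $E(n,A;I)\subseteq H$; hence $\nu\in H$. As $\nu$ was an arbitrary generator of $E^{\triangleleft}(n,A;I^2)$, the proposition follows.

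The main obstacle, to my mind, sits in the reduction step: because $H$ need not be normalized by $E(n,A)$ (which is essentially the reason the statement involves $I^2$ rather than $I$), one cannot merely check generators and appeal to normality, but must use a generating set of $E^{\triangleleft}(n,A;I^2)$ that is genuinely compatible with conjugation by $E(n,A)$. Once the ``opposite elementary'' phenomenon is isolated—whether inside that reduction or in the final commutator computation—it is resolved by the same mechanism: split an element of $I^2$ as a product from $I$ and absorb the troublesome conjugation into a third coordinate, which is precisely where the hypotheses $n\ge 3$ and the passage from $I$ to $I^2$ are used.
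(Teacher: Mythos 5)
Your argument is correct, and your terminal computation is verified: for $x=bc$ with $b,c\in I$ one indeed has $e_{i,j}(a)e_{j,k}(b)e_{i,j}(-a)=e_{i,k}(ab)e_{j,k}(b)$ and $e_{i,j}(a)e_{k,i}(c)e_{i,j}(-a)=e_{k,j}(-ca)e_{k,i}(c)$, so each conjugated generator is a commutator of two elements of $E(n,A;I)\subseteq\langle U_n(A;I)\cup L_n(A;I)\rangle$. The route, however, differs from the paper's. The paper does not reprove the proposition (it cites Tits), and its closest in-house argument, the proof of Lemma \ref{lemma: Gen.Tits}, follows Tits' original scheme: one forms the rank-one pieces $F^{\triangleleft}_{i,j}(I^2)$ (normal closures of $\langle e_{i,j}(I^2),e_{j,i}(I^2)\rangle$ inside $\langle e_{i,j}(A),e_{j,i}(A)\rangle$), checks directly from the relations \eqref{eq:rel} that their span is normalized by every $e_{i,j}(a)$ and hence equals $E^{\triangleleft}(n,A;I^2)$, and then places each piece in the target via the same third-index identity $e_{i,j}(bc)=[e_{i,k}(b),e_{k,j}(c)]$, using that $\langle e_{i,k}(I),e_{j,k}(I)\rangle$ and $\langle e_{k,i}(I),e_{k,j}(I)\rangle$ are normalized by the whole rank-one subgroup. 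You instead outsource the identification of $E^{\triangleleft}(n,A;I^2)$ to the classical generation lemma for relative elementary groups (Vaserstein's lemma: for $n\ge 3$, $E^{\triangleleft}(n,A;J)$ is generated by $e_{i,j}(a)e_{j,i}(x)e_{i,j}(-a)$, $x\in J$, $a\in A$), which lets you handle conjugation by a single opposite elementary only. What each approach buys: yours is shorter and cleanly isolates the two ingredients (relative generators plus a two-line commutator identity), but the cited lemma is of essentially the same depth as the proposition and you only sketch its proof, so in a self-contained write-up you would need to prove it or give a precise reference (Vaserstein, Suslin, or Stepanov--Vavilov); Tits' argument, as reproduced in the paper, is self-contained, derives normality directly from the Chevalley relations, and is the version that the paper can adapt in Lemma \ref{lemma: Gen.Tits} to the constrained situation where the pair $\{1,n+1\}$ is excluded, a setting where the off-the-shelf Vaserstein reduction would not apply verbatim.
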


The following is proved in \cite{WM}:
\begin{theorem}[Carter-Keller-Paige] \label{prop:CKP}\label{prop:CKP} There is a first-order statement $\varphi$ in the language of rings with the following properties: 
\begin{enumerate} 
\item $\varphi$ holds in $\mathbb{Z}$.
\item \label{it:bdd.gen.E} If $A$ is a ring satisfying $\varphi$ and $I$ is an ideal of $A$, then $[\SL_n(A;I):E^{\triangleleft}(n,A;I)] \leq 2 \cdot 8!$.
\end{enumerate} 
\end{theorem}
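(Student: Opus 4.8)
The plan is to reduce the statement to a uniform bound on the \emph{defect groups}
\[
D_n(A,I)\;:=\;\SL_n(A;I)/E^{\triangleleft}(n,A;I),
\]
and to obtain that bound from the theory of Mennicke symbols, arranging the argument so that the only arithmetic input is one that can be captured by a first-order sentence true in $\Z$. First I would invoke the standard $K$-theoretic stability for $\GL_n$ over rings of small stable range (Bass, Vaserstein): for $n\ge 3$ the group $D_n(A,I)$ is abelian (indeed $E^{\triangleleft}(n,A;I)\supseteq[\SL_n(A;I),\SL_n(A;I)]$); the stabilization maps $D_3(A,I)\to D_4(A,I)\to\cdots$ are surjective; and $D_3(A,I)$ is generated by Mennicke symbols. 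All of these hold once $A$ and the pair $(A,I)$ satisfy appropriate stable-range conditions, and --- this is the point --- such conditions are expressible by first-order sentences in the language of rings and hold in $\Z$ (for example, ``every unimodular triple $(a_1,a_2,a_3)$ admits $t_2,t_3$ with $(a_2+t_2a_1,\,a_3+t_3a_1)$ unimodular'' is such a sentence). After putting these sentences into $\varphi$ we obtain $|D_n(A,I)|\le|D_3(A,I)|$ for all $n\ge 3$, and it remains to enlarge $\varphi$, keeping it true in $\Z$, so that $|D_3(A,I)|\le 2\cdot 8!$ for every ideal $I\trianglelefteq A$.

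Next I would use the description of $D_3(A,I)$ by Mennicke symbols: it is generated by the symbols $\bigl[\begin{smallmatrix}a\\ b\end{smallmatrix}\bigr]$ attached to pairs with $b\in I$, $a\equiv 1\pmod I$ and $aA+bA=A$ (the class in $D_3(A,I)$ of an $\SL_2$-completion placed in the top-left block), and these satisfy the Mennicke relations
\[
\Bigl[\tfrac{a}{b}\Bigr]=\Bigl[\tfrac{a+sb}{b}\Bigr]=\Bigl[\tfrac{a}{b+ta}\Bigr],\qquad \Bigl[\tfrac{a}{b}\Bigr]\Bigl[\tfrac{a'}{b}\Bigr]=\Bigl[\tfrac{aa'}{b}\Bigr].
\]
Over $\Z$, Dirichlet's theorem on primes in arithmetic progressions lets one replace $a$ by a prime without changing the symbol, and then a short computation in the field $\Z/a\Z$ shows the symbol is trivial; this is the Bass--Milnor--Serre proof that $D_3(\Z,I)=1$. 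Following Carter, Keller and Paige, I would run essentially this computation while spending only a bounded amount of arithmetic. Since ``$a$ is prime'' is not first-order, one replaces it by a first-order statement to the effect that, after one admissible elementary modification of $a$, the residue ring $A/aA$ decomposes as a product of at most a fixed number of residue fields, supplemented by enough instances of the Chinese Remainder Theorem to carry out the unimodular-column reductions used in the first paragraph. The Mennicke relations then force $D_3(A,I)$ to be a quotient of a fixed finite group, and keeping track of the number of elementary moves and of the residual ambiguity --- a quotient of a group of units, and of permutations of the bounded set of residue-field factors --- is what yields the explicit constant $2\cdot 8!$.

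The main obstacle is exactly this last point: isolating a single first-order sentence $\varphi$ that is at once (a) a theorem of elementary number theory about $\Z$, provable from Dirichlet's theorem and the Chinese Remainder Theorem, yet (b) strong enough to push through both the column reductions and the collapse of the Mennicke group, and then (c) carrying out the bookkeeping so that the bound comes out as the stated uniform value $2\cdot 8!$. Items (a) and (b) are the genuine content of the Carter-Keller-Paige argument, carried out in detail in \cite{WM}; the stabilization arguments of the first paragraph are by now standard $K$-theory of $\GL_n$ over rings of small stable range.
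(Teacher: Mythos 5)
Your proposal is correct in outline and takes essentially the same route as the paper: the paper does not reprove Carter--Keller--Paige either, but simply takes $\varphi$ to be the conjunction of the first-order conditions $SR_{1\frac12}$, $Gen(2\cdot 8!,1)$ and $Exp(2\cdot 8!,2)$ of \cite{WM}, citing \cite{WM} (Lemma 2.13, Corollary 3.5, Theorem 3.9) for the fact that $\mathbb{Z}$ satisfies them and \cite{WM} (Theorem 3.12) for the index bound. Your stable-range stabilization, Mennicke-symbol reduction, and first-order surrogate for Dirichlet's theorem are exactly the content packaged in those conditions, so apart from the precise bookkeeping behind the constant $2\cdot 8!$ (which both you and the paper defer to \cite{WM}) the two arguments coincide.
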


\begin{remark} Theorem \ref{prop:CKP} is proved in \cite{WM}. More precisely, if we take $\varphi$ to be the conjunction of the conditions $SR_{1\frac12}$, $Gen(2 \cdot 8!,1)$, $Exp(2 \cdot 8!,2)$ (see \cite[Definitions 2.10, 3.2, 3.6]{WM}), then \cite[Lemma 2.13, Corollary 3.5, Theorem 3.9]{WM} imply that $\mathbb{Z}$ satisfies $\varphi$ and \eqref{it:bdd.gen.E} is \cite[Theorem 3.12]{WM}.
\end{remark}

\begin{corollary}\label{claim:bdd.gen.2} There is a constant $C=C(n)$ such that the following holds: For any $q \in \N^+$, there are $g_1,\ldots,g_{2 \cdot 8!}\in \SL_n(\Z;q^2)$ such that $\SL_n(A;q^2)$ is contained in the union of the translations by $g_1,\ldots,g_{2 \cdot 8!}$ of the set $\left( U_n(\Z;q) L_n(\Z;q) \right)^{C}$.
\end{corollary}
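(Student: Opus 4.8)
The plan is to deduce Corollary~\ref{claim:bdd.gen.2} from the Carter--Keller--Paige Theorem~\ref{prop:CKP} together with Tits's Proposition~\ref{prop:Tits}, by a short chain of inclusions. The key observation is that all the relevant objects are stable along the chain
\[
\bigl( U_n(\Z;q) L_n(\Z;q) \bigr)^C \ \subseteq\ \langle U_n(\Z;q)\cup L_n(\Z;q)\rangle \ \supseteq\ E^{\triangleleft}(n,\Z;q^2),
\]
and that $\SL_n(\Z;q^2)$ is covered by boundedly many cosets of $E^{\triangleleft}(n,\Z;q^2)$.

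First I would apply Theorem~\ref{prop:CKP} with $A=\Z$ (which satisfies the first-order statement $\varphi$) and the ideal $I=q^2\Z$. Part~\eqref{it:bdd.gen.E} gives $[\SL_n(\Z;q^2):E^{\triangleleft}(n,\Z;q^2)]\le 2\cdot 8!$, so there exist coset representatives $g_1,\dots,g_{2\cdot 8!}\in\SL_n(\Z;q^2)$ — padding with repeats if the index is smaller — such that
\[
\SL_n(\Z;q^2)\ =\ \bigcup_{i=1}^{2\cdot 8!} g_i\, E^{\triangleleft}(n,\Z;q^2).
\]
(Here I use that $E^{\triangleleft}(n,\Z;q^2)\subseteq\SL_n(\Z;q^2)$, which holds since elementary matrices with entries in $q^2\Z$ lie in $\SL_n(\Z;q^2)$, a normal subgroup of $\SL_n(\Z)\supseteq E(n,\Z)$.) Next, Proposition~\ref{prop:Tits}, applied with $I=q\Z$ so that $I^2=q^2\Z$, yields $E^{\triangleleft}(n,\Z;q^2)\subseteq\langle U_n(\Z;q)\cup L_n(\Z;q)\rangle$. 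Combining,
\[
\SL_n(\Z;q^2)\ \subseteq\ \bigcup_{i=1}^{2\cdot 8!} g_i\,\langle U_n(\Z;q)\cup L_n(\Z;q)\rangle.
\]

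It remains to replace the full subgroup $\langle U_n(\Z;q)\cup L_n(\Z;q)\rangle$ by a bounded-length product set $\bigl(U_n(\Z;q)L_n(\Z;q)\bigr)^C$ with $C=C(n)$ independent of $q$. This is the one genuinely nontrivial point, and it is exactly the content of Theorem~\ref{thm:intro.bounded.generation}: there is a constant $C$ such that $\bigl(U_n(\Z;q)L_n(\Z;q)\bigr)^C$ is a \emph{subgroup} of $\SL_n(\Z)$ (of finite index) for every $n\ge 3$ and every $q$. Granting that, the set $\bigl(U_n(\Z;q)L_n(\Z;q)\bigr)^C$ is a subgroup containing both generating sets $U_n(\Z;q)$ and $L_n(\Z;q)$, hence contains $\langle U_n(\Z;q)\cup L_n(\Z;q)\rangle$, and in fact equals it. Substituting into the display above gives
\[
\SL_n(\Z;q^2)\ \subseteq\ \bigcup_{i=1}^{2\cdot 8!} g_i\,\bigl(U_n(\Z;q)L_n(\Z;q)\bigr)^{C},
\]
which is the assertion (noting $\SL_n(A;q^2)$ in the statement should read $\SL_n(\Z;q^2)$).

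The \textbf{main obstacle} is purely a matter of logical dependency rather than of technique: the proof requires Theorem~\ref{thm:intro.bounded.generation}, whose inductive proof occupies the rest of this section and whose base case $n=3$ rests on Carter--Keller--Paige. Once that theorem is available, the only care needed is in the bookkeeping — checking that $C=C(n)$ from Theorem~\ref{thm:intro.bounded.generation} does not secretly depend on $q$ (it does not, since $U_n(\Z;q)$ and $L_n(\Z;q)$ are, up to the obvious conjugation, independent of $q$ in the relevant structural sense used there), and being slightly careful that when the index $[\SL_n(\Z;q^2):E^{\triangleleft}(n,\Z;q^2)]$ is strictly less than $2\cdot 8!$ one pads the list of coset representatives with arbitrary elements of $\SL_n(\Z;q^2)$ so that exactly $2\cdot 8!$ translates are used.
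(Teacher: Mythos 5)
Your argument is circular. The step you defer to Theorem~\ref{thm:intro.bounded.generation} --- that $\bigl(U_n(\Z;q)L_n(\Z;q)\bigr)^{C}$ is a subgroup with $C=C(n)$ independent of $q$ --- is proved in the paper only later, and its proof runs through exactly the statement you are asked to prove: Corollary~\ref{claim:bdd.gen.2} is the input to Proposition~\ref{claim:bdd.gen.3} (via Lemmas~\ref{lem:double} and~\ref{lem:generation}), and Proposition~\ref{claim:bdd.gen.3} for $n=3$ together with Lemma~\ref{lem:up} is what yields Theorem~\ref{thm:intro.bounded.generation}. So you may not invoke that theorem here. The first half of your argument (Carter--Keller--Paige plus Tits give, for each fixed $q$, a covering of $\SL_n(\Z;q^2)$ by at most $2\cdot 8!$ cosets of the subgroup $\langle U_n(\Z;q)\cup L_n(\Z;q)\rangle$) is correct, but it is the easy part: the whole content of the corollary is the passage from the subgroup to a product set of length $C(n)$ \emph{uniformly in $q$}. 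Your parenthetical justification of that uniformity (that $U_n(\Z;q)$ and $L_n(\Z;q)$ are ``up to the obvious conjugation'' independent of $q$) does not hold up: the diagonal dilation conjugating $U_n(\Z)$ into $U_n(\Z;q)$ is not in $\SL_n(\Z)$, does not preserve integrality on the lower-triangular side, and does not carry $\SL_n(\Z)$ or $\SL_n(\Z;q^2)$ to themselves, so no structural statement transfers and the $q$-uniformity remains exactly the unproved point.

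The paper obtains the uniform $C(n)$ by a compactness argument, which is the reason Theorem~\ref{prop:CKP} is formulated as a first-order sentence $\varphi$. If the corollary failed, then for every $k$ there would be $q_k\in\N^+$ and $g_{k,1},\dots,g_{k,2\cdot 8!+1}\in\SL_n(\Z;q_k^2)$ with $g_{k,i}^{-1}g_{k,j}\notin\bigl(U_n(\Z;q_k)L_n(\Z;q_k)\bigr)^{k}$ for $i\neq j$. Passing to an ultrapower $A$ of $\Z$ (elementarily equivalent to $\Z$, hence satisfying $\varphi$), with $I$ the ideal represented by $(q_k\Z)_k$, the elements represented by the sequences $(g_{k,i})_k$ lie in $2\cdot 8!+1$ distinct cosets of $\langle U_n(A;I)L_n(A;I)\rangle$ inside $\SL_n(A;I^2)$, contradicting the index bound $\le 2\cdot 8!$ that Proposition~\ref{prop:Tits} and Theorem~\ref{prop:CKP} give over $A$. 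Some genuinely $q$-uniform input of this kind is indispensable; deferring to a later theorem that itself rests on this corollary does not close the gap.
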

\begin{proof} 
Let $A$ is a ring which is elementarily equivalent to $\mathbb{Z}$ (i.e., satisfies the same first order sentences as $\Z$) and let $I$ be an ideal of $A$. Proposition \ref{prop:Tits} and Theorem \ref{prop:CKP} imply that 
\begin{equation}\label{eq:ele}
\left[ \SL_n(A;I^2) : \SL_n(A;I^2) \cap \langle U_n(A;I) L_n(A;I) \rangle \right] \leq 2 \cdot 8!.
\end{equation}
Assume the Corollary is false. Then, for every $k \in \N$, there are $q_k \in \N^+$ and matrices $g_{k,1},\ldots,g_{k,2 \cdot 8!+1} \in \SL_n(\Z ; q_k^2)$ such that $(g_{k,i}) ^{-1} g_{k,j} \notin \left( U_n(\Z ; q_k) L_n(\Z ; q_k) \right) ^k$ if $i\neq j$. 

Choose a nonprincipal ultrafilter $\mathcal{U}$ on $\mathbb{N}$, and let $A$ be the ultrapower of $\Z$ over $\mathcal{U}$. Then $A$ is elementarily equivalent to $\mathbb{Z}$ and  $\SL_n(A)$ is isomorphic to the ultrapower of $\SL_n(\Z)$ over $\mathcal{U}$. Let $I$ be the ideal of $A$ represented by $\prod_k q_k\Z $, and for every $1 \le i \le k$, let $g_i\in \SL_n(A;I^2)$ be the element represented by $(g_{k,i})_k$. Then $g_1,\ldots,g_{2 \cdot 8!+1}$ belong to different cosets of $\langle U_n(A;I) L_n(A;I) \rangle$, contradicting Equation \eqref{eq:ele}. 
\end{proof}

The following two technical lemmas will be needed in the proof of Proposition \ref{claim:bdd.gen.3} below. 

\begin{lemma} \label{lem:double} Let $G$ be a group, let $X \subset G$ be a symmetric set such that there are $d$ translates of $X$ that cover $G$. Then $X^{4d+2}$ is a group.
\end{lemma}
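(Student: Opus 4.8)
The plan is to show that the chain $X\subseteq X^{2}\subseteq X^{3}\subseteq\cdots$ stabilizes after only $O(d)$ steps, after which the corresponding term is already a subgroup. First recall that, in the paper's convention, $1\in X^{m}$ for every $m$, each $X^{m}$ is symmetric (since $X=X^{-1}$), and $X^{a}X^{b}\subseteq X^{a+b}$. Hence if $X^{k+1}=X^{k}$ for some $k$, then $X^{k}X\subseteq X^{k+1}=X^{k}$, so $X^{k}$ is closed under multiplication, contains $1$, and is symmetric, i.e.\ it is a subgroup; moreover $X^{m}=X^{k}$ for all $m\ge k$. Therefore it suffices to find some $k$ with $1\le k\le 2d$ for which $X^{2k}=X^{2k+2}$: then $X^{4d+2}=X^{2k}$, since $2k\le 4d<4d+2$, is a group.

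To produce such a $k$ I would argue by contradiction, supposing that $X^{2}\subsetneq X^{4}\subsetneq\cdots\subsetneq X^{4d+2}$ is a chain of $2d$ strict inclusions. Choose $b_{1}=1\in X^{2}$ and, for $k=2,\dots,2d+1$, an element $b_{k}\in X^{2k}\setminus X^{2k-2}$; this gives $2d+1$ elements with $b_{k}\in X^{2k}$. Writing $G=\bigcup_{i=1}^{d}g_{i}X$, assign to each $k$ an index $i(k)$ with $b_{k}\in g_{i(k)}X$. The crux is a pigeonhole packing estimate: if $b_{p},b_{q}$ with $p<q$ lie in the same translate $g_{i}X$, write $b_{p}=g_{i}s$, $b_{q}=g_{i}t$ with $s,t\in X$; then $b_{p}^{-1}b_{q}=s^{-1}t\in X\cdot X\subseteq X^{2}$ (using $X=X^{-1}$), so $b_{q}=b_{p}(b_{p}^{-1}b_{q})\in X^{2p}X^{2}\subseteq X^{2p+2}$. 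Since $b_{q}\notin X^{2q-2}$, this forces $2p+2>2q-2$, i.e.\ $p=q-1$. Hence any two of the $b_{k}$ lying in a common translate have consecutive indices, so each of the $d$ translates contains at most two of them, contradicting $2d+1>2d$.

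I expect the only genuine content to be this pigeonhole packaging — the observation that the tower of powers of $X$ cannot keep growing for more than $2d$ doublings; the remaining steps are formal manipulations with $X^{a}X^{b}\subseteq X^{a+b}$ and the definition of a subgroup, with only a trivial check at the degenerate index $k=1$ (where $b_{1}=1$, which ends up alone in its translate). The same argument in fact yields the marginally sharper statement that $X^{4d}$ is a subgroup.
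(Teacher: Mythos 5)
Your proof is correct and follows essentially the same route as the paper's: both exploit the fact that two elements of a common translate $g_iX$ differ by an element of $X^2$, so that the chain of even powers of $X$ must stabilize within roughly $2d$ doublings, forcing $X^{4d+2}$ to be a group. The only difference is bookkeeping --- the paper tracks which coset representatives $g_i$ get absorbed into successive powers of $Y=X^2$, while you count witnesses of strict growth per translate --- which incidentally yields the marginally sharper exponent $4d$.
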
 

\begin{proof} Denote $Y=X^2$. Then $1\in Y$ and there are $d$ translates of $Y$ that cover $G$. 
Since $1 \in Y$, $Y^k \subseteq Y^{k+1}$ for every $k$. 
It is enough to show that $Y^{k}=Y^{k+1}$ for some $k \leq 2d+1$. Suppose that $G=\cup_{i=1}^d g_iY$
for some $g_1,\ldots,g_d \in G$. We can assume that $g_1=1$. For every $k$, if $Y^{k} \neq Y^{k+1}$, choose $h\in Y^{k+1} \smallsetminus Y^k$. By assumption, there is $i$ such that $h\in g_iY$. Then $g_i\in Y^{k+2}$ but $g_i \notin Y^k$. By induction we see that if $Y^{2k-1} \neq Y^{2k}$ for some $1 \le k$, then 
$Y^{2k+1}$ contains at least $k$ distinct $g_i$s. This implies that $Y^{2d+1}=Y^{2d+2}$.  
\end{proof}

\begin{lemma}\label{lem:generation} Let $K \subseteq H \subseteq G$ be groups
such that $[H:K] < \infty$. Let
$X \subseteq G$ be a symmetric subset. Assume that $HX=G$ and that $K \subseteq X$. Then, $X^{4[H:K]}$ is a subgroup.
\end{lemma}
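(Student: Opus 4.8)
The plan is to reduce this to the previous lemma (Lemma \ref{lem:double}) by working inside the group $H$ rather than $G$. The key observation is that, since $HX = G$ and $X$ is symmetric, $H$ itself is covered by few translates of $X \cap H$ — in fact I expect exactly $[H:K]$ translates suffice, using the cosets of $K$ in $H$ together with the hypothesis $K \subseteq X$. Indeed, pick coset representatives $h_1, \dots, h_m$ for $K$ in $H$, where $m = [H:K]$; then $H = \bigcup_{i=1}^m h_i K \subseteq \bigcup_{i=1}^m h_i X$, so $m$ translates of $X$ cover $H$.

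First I would restrict attention to $Y := X \cap H$. Since $K \subseteq X$ and $K \subseteq H$, we have $K \subseteq Y$, and $Y$ is symmetric because $X$ is. The covering $H = \bigcup_{i=1}^m h_i K$ with $h_i \in H$ shows that $m$ translates of $K$ (hence of $Y$) cover $H$; so $H$ is covered by $m = [H:K]$ translates of the symmetric set $Y \subseteq H$.

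Next I would apply Lemma \ref{lem:double} with the ambient group taken to be $H$, the symmetric subset taken to be $Y$, and $d = [H:K]$. That lemma yields that $Y^{4d+2}$ is a subgroup of $H$. Actually, to match the stated exponent $4[H:K]$ rather than $4[H:K]+2$, I would instead rerun the argument of Lemma \ref{lem:double} slightly more carefully: with $K \subseteq Y$ one already has $1 \in Y$, so there is no need to pass to $Y^2$, and the chain $Y^k \subseteq Y^{k+1}$ stabilizes by step $2d$ or so; tracking constants, $Y^{4d}$ should already be closed under multiplication. (If the cleaner route is preferred, one can just cite Lemma \ref{lem:double} and absorb the discrepancy, but here $1 \in Y$ lets us save the factor.) Either way we conclude $Y^{4[H:K]}$ is a subgroup of $H$, and since $Y \subseteq X$ we get $Y^{4[H:K]} \subseteq X^{4[H:K]}$.

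Finally I would show the reverse containment $X^{4[H:K]} \subseteq Y^{4[H:K]}$, which will complete the proof since then $X^{4[H:K]} = Y^{4[H:K]}$ is a subgroup. This is where the hypothesis $HX = G$ and the subgroup property of $Y^{4[H:K]}$ come together: given any product $x_1 \cdots x_{4m}$ of elements of $X \cup \{1\}$, I want to rewrite each $x_j$ as an element of $Y$. The cleanest way is to note $G = HX$ combined with $K \subseteq X \cap H = Y$ forces $G = H \cdot Y$ (replace $X$ by $Y$ in the covering: $HX = H(X \cap H) \cdot (\text{something})$ — this needs a short check), and then argue that any element of $X$ that also lies in the subgroup $\langle Y \rangle = Y^{4m}$ is actually a short product of $Y$-elements. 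The main obstacle — and the step deserving the most care — is precisely this last reverse inclusion: one must verify that every generator $x \in X$ already lies in $Y^{4[H:K]}$, i.e. that $X$ does not reach "outside" the subgroup generated by $X \cap H$. This follows because $\langle X \rangle = \langle Y \rangle$: indeed $G = HX = \langle Y\rangle X$ with $\langle Y \rangle$ of finite index, and a standard argument (or direct use of $HX=G$, $K \subseteq Y$) shows $\langle Y \rangle = G$ when $\langle X\rangle = G$, or more generally that $X \subseteq \langle Y \rangle$; combined with the bound $[\langle Y\rangle : K] \le [H:K]$ this pins every element of $X$ into $Y^{4[H:K]}$. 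I expect the bookkeeping of these index bounds to be the only delicate point; the structural skeleton is just "cover $H$ by $[H:K]$ translates of $X\cap H$, then quote Lemma \ref{lem:double}."
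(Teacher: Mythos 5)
Your first step (covering $H$ by $[H:K]$ translates of $Y=X\cap H$ and invoking Lemma \ref{lem:double} inside $H$) is fine, but the proof collapses at the final step: the reverse containment $X^{4[H:K]}\subseteq Y^{4[H:K]}$, i.e.\ the claim $X\subseteq\langle X\cap H\rangle$, is false in general. Since $Y\subseteq H$, anything you build from $Y$ stays inside $H$, whereas the lemma must produce a subgroup of $G$ that typically is \emph{not} contained in $H$. A minimal counterexample: $G=\Z/2\Z$, $H=K=\{0\}$, $X=G$. All hypotheses hold with $[H:K]=1$, $X^{4}=G$ is a subgroup, but $Y=\{0\}$ and $\langle Y\rangle\neq G$, so $X^{4}\not\subseteq Y^{4}$. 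The same failure occurs in the paper's actual application (Proposition \ref{claim:bdd.gen.3}): there $H=\SL_n(\Z;q^2)$ and $X$ contains $U_n(\Z;q)L_n(\Z;q)$, which does not lie in $H$, so $X\not\subseteq\langle X\cap H\rangle$. The auxiliary claim you lean on, that $HX=G$ and $K\subseteq Y$ ``force $G=H\cdot Y$,'' is also impossible, since $H\cdot Y\subseteq H\subsetneq G$ in these examples; and $\langle X\rangle=\langle Y\rangle$ fails for the same reason.

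The missing idea is how to use $HX=G$ to control powers of $X$ itself rather than powers of $X\cap H$. The paper does this by tracking the non-decreasing sets $X^nK\cap H$, which are unions of left cosets of $K$ in $H$; finiteness of $[H:K]$ forces this chain to be constant on a window of length five by step $4[H:K]-3$. Then $HX=G$ and symmetry of $X$ give $X^{n+3}\subseteq\left(X^{n+4}\cap H\right)X$ (write $g=hx$ and note $h=gx^{-1}\in X^{n+4}\cap H$), and the stabilized chain together with $K\subseteq X$ yields $X^{n+3}\subseteq\left(X^{n}K\cap H\right)X\subseteq X^{n+2}$, so the $X$-powers themselves stop growing and $X^{n+2}$ (hence $X^{4[H:K]}$) is a subgroup. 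In short: intersecting with $H$ must happen \emph{after} multiplying by $K$ and only to exploit the coset count, with one factor of $X$ peeled off on the right via $HX=G$; restricting the generating set to $X\cap H$ at the outset loses exactly the elements the lemma is about.
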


\begin{proof} Since $1 \in K \subseteq X$, the sets $\left( X^n K \cap H \right) \subseteq H$ are non-decreasing. Hence, there is $n \leq 4[H:K]-3$ such that 
$$X^{n} K \cap H=X^{n+1} K \cap H=X^{n+2} K \cap H=X^{n+3} K \cap H=X^{n+4} K \cap H .$$ Since $HX=G$, we have $X^{n+3} \subseteq \left( X^{n+4} \cap H \right) X $. Thus,
\[
X^{n+3} \subseteq \left( X^{n+4} \cap H \right) X \subseteq \left( X^{n+4} K \cap H \right) X \subseteq \left( X^{n} K\cap H \right) X \subseteq X^{n+2},
\]
so $X^{n+2}$ is a group.
\end{proof}

\begin{proposition}\label{claim:bdd.gen.3} There is a constant $D=D(n)$ such that, for any $q \in \N^+$, the set $\left( U_n(\Z;q) L_n(\Z;q) \right)^{D}$ is a group, and, therefore, equals to $\langle U_n(\Z;q) L_n(\Z;q) \rangle$.
\end{proposition}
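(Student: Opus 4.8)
The plan is to combine Corollary~\ref{claim:bdd.gen.2} with the two abstract group-theoretic lemmas (Lemma~\ref{lem:double} and Lemma~\ref{lem:generation}) in order to upgrade the ``bounded generation up to finite index'' statement into an actual ``bounded generation'' statement, uniformly in $q$. First I would fix $n$ and set $X = U_n(\Z;q)L_n(\Z;q)$, together with its symmetrization $X^{\pm} = X \cup X^{-1}$; note that $U_n(\Z;q)$ and $L_n(\Z;q)$ are themselves groups, so $X^{-1} = L_n(\Z;q)U_n(\Z;q) \subseteq X^2$ and hence powers of $X$ and of $X^{\pm}$ are comparable up to a factor of $2$. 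The key point supplied by Corollary~\ref{claim:bdd.gen.2} is that the congruence subgroup $H := \SL_n(\Z;q^2)$ is covered by $2\cdot 8!$ translates of $X^{C}$ for the constant $C=C(n)$ from that corollary.

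The main obstacle is that Corollary~\ref{claim:bdd.gen.2} only controls the congruence subgroup $\SL_n(\Z;q^2)$, not all of $\SL_n(\Z)$, and the index $[\SL_n(\Z):\SL_n(\Z;q^2)]$ is unbounded in $q$; so one cannot just apply Lemma~\ref{lem:double} or Lemma~\ref{lem:generation} directly to $G=\SL_n(\Z)$ with a $q$-independent bound. The way around this is to work \emph{inside} $H = \SL_n(\Z;q^2)$. Within $H$, the set $X^C \cap H$ (or rather a symmetric version of it contained in $H$) is covered by $2\cdot 8!$ translates of itself by elements of $H$, so Lemma~\ref{lem:double} applied to the group $H$ and the symmetric set $X^{C} \cap H$ shows that some bounded power $(X^{C}\cap H)^{4\cdot(2\cdot 8!)+2}$ is already a subgroup of $H$ — call it $K$. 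Crucially $K$ contains all of $E^{\triangleleft}(n,\Z;q^2)$ (by Proposition~\ref{prop:Tits}, since $X \supseteq U_n(\Z;q)\cup L_n(\Z;q)$ generates a group containing $E^{\triangleleft}(n,\Z;q^2)$, and this group lies inside $H$), and by the Carter--Keller--Paige bound $[\SL_n(\Z;q^2):E^{\triangleleft}(n,\Z;q^2)] \le 2\cdot 8!$, so $[\SL_n(\Z):K]$ is bounded by $[\SL_n(\Z):\SL_n(\Z;q^2)]\cdot 2\cdot 8!$ — still unbounded in $q$, but that is fine because of the next step.

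Now I would invoke Lemma~\ref{lem:generation} with the triple $K \subseteq \SL_n(\Z;q^2) = H \subseteq \SL_n(\Z) = G$ and the symmetric set $X^{\pm}$ raised to a suitable bounded power. Two facts are needed: that $[\SL_n(\Z;q^2):K]$ is bounded by a \emph{$q$-independent} constant, and that $H \cdot X = G$, i.e.\ $\SL_n(\Z) = \SL_n(\Z;q^2)\cdot U_n(\Z;q)L_n(\Z;q)$. The first follows from the previous paragraph: $[\SL_n(\Z;q^2):K] \le [\SL_n(\Z;q^2):E^{\triangleleft}(n,\Z;q^2)] \le 2\cdot 8!$ since $K \supseteq E^{\triangleleft}(n,\Z;q^2)$. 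The second is essentially the statement that $U_n(\Z;q)L_n(\Z;q)$ already surjects onto $\SL_n(\Z/q^2\Z)$ under reduction mod $q^2$ — which is exactly the content of Corollary~\ref{claim:bdd.gen.2} (the $2\cdot 8!$ translates there can be taken to lie in a finite set and, after possibly enlarging $C$, the reduction of $X^C$ mod $q^2$ together with $\SL_n(\Z;q^2)$ is all of $\SL_n(\Z)$; more precisely one shows the image of $X$ meets every coset of $H$). Granting these, Lemma~\ref{lem:generation} yields that $(X^{\pm})^{4[H:K]} \subseteq (X^{\pm})^{8\cdot 8!}$ is a subgroup of $G$, and since $(X^{\pm})^m \subseteq X^{2m}$, the set $X^{D}$ is a group for $D = D(n)$ a fixed constant (absorbing the factors $C$, $2\cdot 8!$, and the numerical constants from the two lemmas). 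That group contains $X$, hence equals $\langle U_n(\Z;q)L_n(\Z;q)\rangle$, as claimed.
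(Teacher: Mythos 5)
Your overall strategy is the paper's (use Corollary~\ref{claim:bdd.gen.2} together with Lemma~\ref{lem:double} inside $\SL_n(\Z;q^2)$ to produce a subgroup $K$ of uniformly bounded index contained in a bounded power of $X$, then finish with Lemma~\ref{lem:generation}), but the final application of Lemma~\ref{lem:generation} breaks down as written. You take $G=\SL_n(\Z)$, so you need $HX=G$, i.e.\ $\SL_n(\Z)=\SL_n(\Z;q^2)\,U_n(\Z;q)L_n(\Z;q)$. This is false for $q\ge 2$: every element of $U_n(\Z;q)L_n(\Z;q)$ is congruent to the identity mod $q$, so $\SL_n(\Z;q^2)U_n(\Z;q)L_n(\Z;q)\subseteq\SL_n(\Z;q)$, a proper subset of $\SL_n(\Z)$. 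In particular $U_n(\Z;q)L_n(\Z;q)$ does not surject onto $\SL_n(\Z/q^2\Z)$, and this is not the content of Corollary~\ref{claim:bdd.gen.2}: the translating elements $g_i$ there lie in $\SL_n(\Z;q^2)$, so the corollary says nothing about cosets of $\SL_n(\Z;q^2)$ in $\SL_n(\Z)$. The missing idea is to shrink the ambient group rather than enlarge the covering: since $\SL_n(\Z;q)/\SL_n(\Z;q^2)$ is abelian, the set $G_0:=\SL_n(\Z;q^2)L_n(\Z;q)U_n(\Z;q)$ is already a subgroup of $\SL_n(\Z)$, and for this $G_0$ the hypothesis $HX=G_0$ of Lemma~\ref{lem:generation} holds essentially by construction. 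Applying the lemma to $K\subseteq H=\SL_n(\Z;q^2)\subseteq G_0$ gives that a bounded power of $X$ is a group, which is all the proposition claims; the finite-index statement in $\SL_n(\Z)$ is only needed later (Theorem~\ref{thm:intro.bounded.generation}) and comes from Proposition~\ref{prop:Tits}, not from this step. This choice of $G_0$ is exactly what the paper does.

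A secondary flaw: your claim that $K\supseteq E^{\triangleleft}(n,\Z;q^2)$ is not justified. The group $\langle U_n(\Z;q)\cup L_n(\Z;q)\rangle$ is \emph{not} contained in $H=\SL_n(\Z;q^2)$ (it contains $e_{1,2}(q)$), and Proposition~\ref{prop:Tits} only places $E^{\triangleleft}(n,\Z;q^2)$ inside the full subgroup generated by $X$, not inside the fixed bounded power $K$; two finite-index subgroups of $H$ need not be comparable. Fortunately you do not need this claim: since $H$ is covered by $2\cdot 8!$ translates, by elements of $H$, of the symmetric set $X^{C}\cap H\subseteq K$, you get $[H:K]\le 2\cdot 8!$ directly from the covering, which is the only purpose the $E^{\triangleleft}$ containment served in your argument (and is how the paper obtains the uniform index bound).
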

\begin{proof}
For any $k$, the set $(L_n(\Z;q) U_n(\Z;q))^{k+1}$ contains the symmetric subset $(L_n(\Z;q) U_n(\Z;q))^k \cup (U_n(\Z;q) L_n(\Z;q))^k$. Corollary \ref{claim:bdd.gen.2} and Lemma \ref{lem:double} imply that there is a constant $D'$ such that $(L_n(\Z;q) U_n(\Z;q))^{D'}$ contains a subgroup $S(I)$ of $\SL_n(\Z;q^2)$ of index at most $2 \cdot 8!$.

Note that $\SL_n(\Z,q)/\SL_n(\Z,q^2)$ is abelian so $\SL_n(\Z,q^2)L_n(\Z;q) U_n(\Z;q)$ is a subgroup of $\SL_n(\Z)$. The desired result follows by applying Lemma \ref{lem:generation} to $K=S(I)$, $H=\SL_n(\Z;q^2)$, $G=\SL_n(\Z,q^2)L_n(\Z;q) U_n(\Z;q)$, and $X=(L_n(\Z;q) U_n(\Z;q))^{D'} \cup (U_n(\Z;q) L_n(\Z:q))^{D'} \subseteq (L_n(\Z;q) U_n(\Z;q))^{D'+1}$:
\end{proof}

In order to prove Theorem \ref{thm:intro.bounded.generation} we have to show that the constant 
$D(n)$ in Proposition \ref{claim:bdd.gen.3} can be made independent of $n$. The following technical generalization of Proposition \ref{prop:Tits} is needed. 

\begin{lemma}\label{lemma: Gen.Tits} Let $n \ge 3$ and let $I$ be an ideal in a commutative ring $A$. Then $E^\nom(n+1; A ,I^2)$ is contained in the subgroup $$K(I):=  \la e_{i,j}(a) \mid 1 \le i \neq j \le n+1 , \{i,j\}\neq \{1,n+1\} , a \in I \ra.$$
\end{lemma}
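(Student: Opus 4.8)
The statement to prove is Lemma \ref{lemma: Gen.Tits}: for $n\ge 3$ and $I$ an ideal of a commutative ring $A$, the normal closure $E^\nom(n+1;A,I^2)$ lies inside the subgroup $K(I)$ generated by all elementary matrices $e_{i,j}(a)$ with $a\in I$ and $\{i,j\}\ne\{1,n+1\}$. Note that $K(I)$ is, up to the ``forbidden corner'' $(1,n+1)$ and its transpose $(n+1,1)$, almost all of $E(n+1,A;I)$; so the content is that one can generate the normal closure of the $I^2$-congruence elementary subgroup while avoiding the two extreme corner positions. The natural strategy is to mimic the proof of Tits' Proposition \ref{prop:Tits} but keep careful track of which index pairs appear, exploiting the extra room afforded by having $n+1\ge 4$ rows and columns.

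**Key steps.** First I would recall the structure of a generating set for $E^\nom(n+1;A,I^2)$: it is generated by conjugates $g\,e_{i,j}(c)\,g^{-1}$ with $c\in I^2$ and $g\in E(n+1,A)$. By standard commutator identities (Steinberg relations) it suffices to show two things: (a) every $e_{i,j}(c)$ with $c\in I^2$ and \emph{every} pair $\{i,j\}$ — including the forbidden corner — lies in $K(I)$; and (b) $K(I)$ is normalized by $E(n+1,A)$, or at least that conjugates of the generators in (a) stay in $K(I)$. For (a), the only nontrivial case is $e_{1,n+1}(c)$ with $c=ab$, $a,b\in I$: since $n+1\ge 4$ there is an intermediate index, say $2$, with $\{1,2\}\ne\{1,n+1\}$ and $\{2,n+1\}\ne\{1,n+1\}$, so the Steinberg relation
\[
e_{1,n+1}(ab)=[e_{1,2}(a),e_{2,n+1}(b)]
\]
exhibits $e_{1,n+1}(c)\in K(I)$ provided $c$ factors through $I^2=I\cdot I$; a general element of $I^2$ is a sum of such products, and $e_{1,n+1}$ is additive in its entry, so this handles all of $E(n+1,A;I^2)$ living in the corner. (Off-corner generators $e_{i,j}(c)$ with $c\in I^2\subseteq I$ are in $K(I)$ by definition.) For (b), I would conjugate an off-corner $e_{i,j}(a)$ (with $a\in I$) by an elementary $e_{k,l}(t)$, $t\in A$, and use the commutator formula $e_{k,l}(t)e_{i,j}(a)e_{k,l}(-t)=e_{i,j}(a)\cdot[\text{Steinberg terms}]$; the terms that appear are $e_{k,j}(\pm ta)$ or $e_{i,l}(\pm at)$, which still have an entry in $I$, and when such a term lands in the corner $(1,n+1)$ one rewrites it via the factorization trick above — here one uses that the entry $ta$ or $at$ lies in $I\cdot A$... which is only $I$, not $I^2$. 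This is the subtlety.

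**The main obstacle.** The crux is precisely the interaction between ``normal closure'' (which forces us to conjugate by arbitrary elementary matrices over $A$) and the corner constraint: a conjugate of an $I$-elementary generator can produce an $I$-valued — not $I^2$-valued — elementary matrix in the forbidden corner $(1,n+1)$, which we are not allowed to use directly and which need not factor as a product in $I^2$. The way around this, and the heart of the argument, is to never let a single conjugation step ``reach'' the corner: because $n+1\ge 4$, any conjugating elementary $e_{k,l}(t)$ and any generator $e_{i,j}(a)$ have enough free indices that one can route the computation through a third index, expressing the offending corner term as a commutator $[e_{1,m}(\ast),e_{m,n+1}(\ast)]$ with $2\le m\le n$ and with \emph{both} factors off-corner and $I$-valued. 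Concretely, I expect the proof to proceed by: (i) showing $K(I)\supseteq E(n+1,A;I^2)$ including the corner, via the factorization; (ii) showing $K(I)$ is stable under conjugation by each $e_{k,l}(t)$, case-splitting on whether $\{k,l\}$ overlaps $\{1,n+1\}$ and using the third-index rerouting whenever a corner term threatens to appear; (iii) concluding that $K(I)$ contains the $E(n+1,A)$-normal closure of $E(n+1,A;I^2)$, which is the claim. The bookkeeping in step (ii) — enumerating which Steinberg terms arise and checking each can be rewritten inside $K(I)$ — is the only laborious part, but it is entirely mechanical given the four-index room; the conceptual content is the simple observation that $n+1\ge 4$ provides a ``bypass'' index for the corner.
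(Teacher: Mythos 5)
There is a genuine gap at your step (ii), and it is exactly at the point you yourself flagged. The lemma cannot be proved by showing that $K(I)$ is stable under conjugation by every $e_{k,l}(t)$, $t\in A$: conjugating the legitimate generator $e_{1,m}(a)$, $a\in I$, $2\le m\le n$, by $e_{m,n+1}(t)$ produces the corner factor $e_{1,n+1}(at)$, whose entry lies only in $I\cdot A=I$, and your proposed repair --- rerouting it as a commutator $[e_{1,m'}(\ast),e_{m',n+1}(\ast)]$ with both factors off-corner and $I$-valued --- cannot work, since any such commutator equals $e_{1,n+1}(xy)$ with $xy\in I^2$, and products of such give corner entries in $I^2$ only; an element of $I$ need not lie in $I^2$. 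Moreover, if $K(I)$ were normalized by all of $E(n+1,A)$, then conjugating $e_{2,n+1}(a)$ by the monomial element $e_{1,2}(1)e_{2,1}(-1)e_{1,2}(1)$ would put $e_{1,n+1}(a)$ into $K(I)$ for every $a\in I$, i.e., the subgroup generated by $I$-elementaries would always coincide with its normal closure --- which is false in general and would render the $I^2$ in Proposition \ref{prop:Tits} (and in this lemma) superfluous. Your weaker fallback, checking only that single-elementary conjugates of the $I^2$-level generators stay in $K(I)$, is also insufficient: the normal closure requires iterated conjugation, and after one conjugation you no longer hold an elementary matrix, so continuing needs precisely the (false) stability of $K(I)$. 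The bookkeeping in (ii) therefore does not close up.

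The missing idea is a conjugation-stable intermediate object, which is how the paper (following Tits) argues. For each pair $\{i,j\}$ one takes $F^\nom_{i,j}(I^2)$, the normal closure of the $I^2$-elementaries in positions $(i,j),(j,i)$ inside the rank-one subgroup $F_{i,j}=\la e_{i,j}(A),e_{j,i}(A)\ra$ only; the relations \eqref{eq:rel} show that the subgroup generated by all the $F^\nom_{i,j}(I^2)$ is normalized by every $e_{r,s}(b)$, hence equals $E^\nom(n+1,A;I^2)$, so it suffices to prove $F^\nom_{i,j}(I^2)\subseteq K(I)$ pair by pair. For $\{i,j\}\neq\{1,n+1\}$ this follows by applying Proposition \ref{prop:Tits} to the two block embeddings of $E(n,A;I)$ (top-left and bottom-right), whose $I$-level unipotent upper and lower triangular subgroups lie in $K(I)$. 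For the corner pair one checks directly that $K(I)$ \emph{is} normalized by $e_{1,n+1}(A)$ and $e_{n+1,1}(A)$ (conjugating an off-corner $I$-elementary by a corner elementary only creates off-corner $I$-valued factors) and contains $e_{1,n+1}(I^2)$ and $e_{n+1,1}(I^2)$ via the commutator identity you also use; this gives $F^\nom_{1,n+1}(I^2)\subseteq K(I)$. Your correct ingredients are the factorization of the $I^2$-corner entries and the observation that $n+1\ge 4$ gives a bypass index, but without the reduction to the rank-one normal closures (and the appeal to Proposition \ref{prop:Tits} itself) the argument does not go through.
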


\begin{proof} 
We follow the proof of Proposition \ref{prop:Tits} in \cite{Tits}.
 
Let $1 \le i \ne j \le n+1$, $1 \le r \ne s \le n+1$ and $a,b\in A$. Recall the following relations: \begin{equation}\label{eq:rel} 
\begin{cases}
e_{r,s}(b)e_{i,j}(a)e_{r,s}(b)^{-1}=e_{i,j}(a)e_{i,s}(-ab) & \text{ if } j = r \text{ and } i \ne s \\
e_{r,s}(b)e_{i,j}(a)e_{r,s}(b)^{-1}=e_{i,j}(a)e_{r,j}(ab) & \text{ if } j \ne r \text{ and } i = s \\
e_{r,s}(b)e_{i,j}(a)e_{r,s}(b)^{-1}=e_{i,j}(a) & \text{ if } j \ne r \text{ and } i \ne s
\end{cases}.
\end{equation}

For every $1 \le i \ne j \le n+1$, denote $F_{i,j}(I^2):=\la e_{i,j}(a),e_{j,i}(a) \mid a \in I^2 \ra $. Let  $F_{i,j}^\nom(I^2)$ be the minimal normal subgroup of 
$F_{i,j}:= F_{i,j}(A)$ which contains $F_{i,j}(I^2)$. Define $F^\nom (I^2):=\la F^\nom_{i,j}(I^2) \mid 1 \le i \ne j \le n+1\ra$. Equation \eqref{eq:rel} implies that 
for every $1 \le i \neq j \le n+1$ and every $a \in A$, $e_{i,j}(a)F^\nom(I^2)e_{i,j}(a)^{-1}=F^\nom(I^2)$. 
Thus $F^\nom(I^2)$ is a normal subgroup of $E(n+1,A)$ containing all $e_{i,j}(a)$, $a\in I^2$, so it must be equal to $E^\nom(n+1,A,I^2)$.  Thus, in order to finish the proof it is enough to show that for every $1 \le i < j \le n+1$,  $F^\nom_{i,j}(I^2) \subseteq  K(I)$.

Let $E^+(n,A;I)$ and $E^-(n,A;I)$ be the images of $E(n,A,I)$ in $\SL_{n+1}(A)$ under the embeddings 
$M \mapsto \left(\begin{matrix} M &0 \\ 0 & 1 \end{matrix}\right)$ and $M \mapsto \left(\begin{matrix} 1 &0 \\ 0 & M \end{matrix}\right)$. 
By applying Proposition \ref{prop:Tits} with respect to $E^+(n,A;I)$ and $E^-(n,A;I)$, we see that $K(I)$ contains $F^\nom_{i,j}(I^2)$ for every $1 \le i  <  j \le n+1$ such that $(i,j)\ne (1,n+1)$. 
 
 Equation \eqref{eq:rel} implies that $K(I)$ in normalized by $e_{1,n+1}(a)$ and $e_{n+1,1}(a)$ for every $a \in R$. 
For every $a,b \in I$, $e_{1,n+1}(ab)=[e_{1,2}(a),e_{2,n+1}(-b)]\in K(I)$ and $e_{n+1,1}(ab)=[e_{n+1,2}(a),e_{2,1}(-b)]\in K(I)$.
Thus, $F^\nom_{1,n+1}(I^2) \le K(I)$.
\end{proof}

The next Lemma is the key ingredient in the proof of Theorem \ref{thm:intro.bounded.generation}.

\begin{lemma}\label{lem:up} 
Let $n \ge 3$ and $q \in \N^+$ and assume that 
$\left( U_n(\mathbb{Z};q) ) L_n(\mathbb{Z};q) ) \right)^{D}= \la U_n(\mathbb{Z};q) L_n(\mathbb{Z};q)  \ra $. Then for every $m \ge n$, 
$\left( U_m(\mathbb{Z};q) ) L_m(\mathbb{Z};q) ) \right)^{D}= \la U_m(\mathbb{Z};q) L_m(\mathbb{Z};q)  \ra $. 
\end{lemma}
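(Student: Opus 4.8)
The plan is to reduce the statement about $\SL_m$ to the given hypothesis about $\SL_n$ by an induction on $m$, so it suffices to treat the step $m-1 \rightsquigarrow m$; renaming, we must show: if $\bigl(U_n(\Z;q)L_n(\Z;q)\bigr)^D = \langle U_n(\Z;q)L_n(\Z;q)\rangle$ for some $n\ge 3$, then the same holds with $n$ replaced by $n+1$. Write $U = U_{n+1}(\Z;q)$, $L = L_{n+1}(\Z;q)$, $H = \langle U L\rangle$, and let $H' = \langle U_n(\Z;q)L_n(\Z;q)\rangle$, viewed inside $\SL_{n+1}(\Z)$ via the upper-left block embedding $E^+$; by hypothesis $H' = \bigl(U_n(\Z;q)L_n(\Z;q)\bigr)^D$. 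First I would record that $U$ and $L$ each decompose: an upper-triangular unipotent matrix congruent to $1$ mod $q$ splits as a product of its $E^+$-part, the entries in the last column, and (a bounded number of) commutators, and similarly $\langle U L\rangle$ contains the conjugate $(E^-)$-copy $H''$ obtained from the lower-right block embedding. The point of Lemma~\ref{lemma: Gen.Tits} is exactly that the subgroup $K(q\Z)$ generated by all elementary matrices $e_{i,j}(a)$ with $\{i,j\}\neq\{1,n+1\}$ and $a\in q\Z$ is contained in $\langle U_n(\Z;q)\cup L_n(\Z;q)\rangle$-type subgroups coming from the two block embeddings — more precisely, $E^{\triangleleft}(n+1,\Z;q^2) \subseteq K(q\Z)$, and $K(q\Z)$ is generated by the $E^+$ and $E^-$ copies of the elementary subgroups.

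The key steps, in order, are: (1) using the two embeddings and the relations \eqref{eq:rel}, show that $H = \langle U L\rangle$ is generated by the two conjugate copies $H'$ and $H''$ together with $e_{1,n+1}(q\Z)$ and $e_{n+1,1}(q\Z)$ — equivalently, that every element of $U$ (resp. $L$) lies in $H'\cdot H''\cdot e_{1,n+1}(q\Z)$ (resp. with $e_{n+1,1}$) up to a bounded number of factors; (2) invoke Lemma~\ref{lemma: Gen.Tits}: since $H \supseteq K(q\Z) \supseteq E^{\triangleleft}(n+1,\Z;q^2)$, and since the generators $e_{1,n+1}(ab)=[e_{1,2}(a),e_{2,n+1}(-b)]$ and $e_{n+1,1}(ab)=[e_{n+1,2}(a),e_{2,1}(-b)]$ of the "missing corner" are products of at most two commutators of elements living in the block copies, deduce that $H$ is in fact generated (with bounded length) by $H'$ and $H''$ alone, hence $H = \bigl(U_n(\Z;q)L_n(\Z;q)\bigr)^{D}$-words in the two block positions; (3) carefully bookkeep the lengths: each of $U$, $L$ costs a fixed number of $U_{n}/L_{n}$-blocks plus a fixed number of corner elementary matrices, and each corner elementary matrix costs at most a bounded number of $H'$- and $H''$-factors by the commutator identity, while $H' = \bigl(U_n(\Z;q)L_n(\Z;q)\bigr)^D$ and $H'' = \bigl(U_n(\Z;q)L_n(\Z;q)\bigr)^D$ by hypothesis; (4) conclude that $\bigl(U_{n+1}(\Z;q)L_{n+1}(\Z;q)\bigr)^{D}$ is already all of $H$, after checking that the total length does not exceed $D$ — this last "does not exceed $D$" is presumably arranged by choosing $D$ large enough at the base case and the statement being a closed induction where $D$ is a fixed absolute constant independent of $m$.

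I expect the main obstacle to be step (3)/(4): the \emph{bookkeeping of word lengths} so that the same constant $D$ works for all $m$. The naive decomposition of $U_{n+1}$ into "$E^+$-block $\times$ last column $\times$ corrections" and the resolution of the corner via commutators each introduce a constant factor, and one must show these constants compound into the \emph{same} $D$ rather than growing with $m$. The trick — following Dennis–Vaserstein — is that adding one row/column only introduces a bounded, $m$-independent amount of extra multiplication because the relevant unipotent groups $U_{n+1}$, $L_{n+1}$ are generated in a bounded way from $U_n$, $L_n$ and a single new root subgroup, and the commutator identities $e_{1,n+1}(ab)=[e_{1,2}(a),e_{2,n+1}(-b)]$ only involve indices $1,2,n+1$, which sit in the block copies for $n\ge 3$. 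So the real content is a length-accounting argument showing the recursion $D(m) \le D(m-1)$ (with equality achievable) rather than $D(m) \le c\cdot D(m-1)$; verifying that the conjugations used to move between the $E^+$ and $E^-$ copies, and the commutator reductions of the corner entries, can be absorbed without lengthening beyond $D$ is the crux, and is where the precise form of the hypothesis "$= \langle \cdots\rangle$" (an equality, giving us normality and the full subgroup, not just a large subset) is used.
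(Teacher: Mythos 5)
There is a genuine gap, in fact two. First, your step (2) is false as stated: the corner root subgroups are \emph{not} recoverable from the two block copies. The commutator identities $e_{1,m+1}(ab)=[e_{1,2}(a),e_{2,m+1}(-b)]$ with $a,b\in q\Z$ only produce corner entries in $q^2\Z$ (this is exactly why Lemma \ref{lemma: Gen.Tits} concludes containment of $E^{\nom}(m+1,\Z;q^2)$, not level $q$). Indeed, reducing modulo $q^2$, the quotient $\SL_{m+1}(\Z;q)/\SL_{m+1}(\Z;q^2)$ is abelian and the images of both block copies lie in the span of the non-corner off-diagonal positions, while $e_{1,m+1}(q)$ has a nonzero corner image; so $\la H'\cup H''\ra$ is a proper subgroup of $\la U_{m+1}(\Z;q)L_{m+1}(\Z;q)\ra$ and your reduction collapses. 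Second, and more fundamentally, the crux you yourself flag in steps (3)--(4) is never resolved: the lemma demands the conclusion with the \emph{same} exponent $D$ that appears in the hypothesis, so you cannot ``choose $D$ large enough at the base case,'' and any per-step additive or multiplicative overhead from your decomposition would make the constant grow with $m$, which is precisely what the lemma is designed to rule out. As written, your argument would at best yield $(U_m L_m)^{c(m)D}=\la U_m L_m\ra$ with $c(m)$ growing, which is not the statement.

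The paper sidesteps all length bookkeeping with a stabilizer trick you may want to compare with: set $T:=\left(U_{m+1}(\Z;q)L_{m+1}(\Z;q)\right)^D$ and $H:=\{g\in\SL_{m+1}(\Z)\mid gT=T\}$. Since $H$ is a group, it suffices to show $H$ contains $U_{m+1}(\Z;q)$ (trivial) and $L_{m+1}(\Z;q)$; then $\la U_{m+1}L_{m+1}\ra\subseteq H$ and, as $1\in T$, one gets $T=\la U_{m+1}L_{m+1}\ra$ with no counting at all. One shows $L_m(\Z;q)\subseteq H$ using the semidirect decompositions $U_{m+1}=U_m\ltimes C_m$, $L_{m+1}=L_m\ltimes R_m$ together with the induction hypothesis $L_m(U_mL_m)^D=(U_mL_m)^D$; the second block embedding gives all $e_{j,i}(q\Z)$ with $(i,j)\neq(1,m+1)$. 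For the missing corner $e_{m+1,1}(a)$, $a\in q\Z$ (which, per the first point, genuinely cannot be reduced to block data), the paper combines Lemma \ref{lemma: Gen.Tits} with Mennicke's theorem $E^{\nom}(m+1,\Z;q^2)=\SL_{m+1}(\Z;q^2)$ to get $\SL_{m+1}(\Z;q^2)\subseteq H$, and then uses that $\SL_{m+1}(\Z;q)/\SL_{m+1}(\Z;q^2)$ is abelian, so $e_{m+1,1}(a)U_{m+1}e_{m+1,1}(a)^{-1}\subseteq\SL_{m+1}(\Z;q^2)\cdot U_{m+1}$, whence $e_{m+1,1}(a)T\subseteq T$. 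Your proposal correctly identifies the relevant ingredients (the two block embeddings, Lemma \ref{lemma: Gen.Tits}, the Dennis--Vaserstein induction), but without the stabilizer device and the congruence-subgroup handling of the corner, the two gaps above remain.
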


\begin{proof}
The proof follows the proof of Lemma 7 of \cite{DV} and is by induction on $m$. The base case $m=n$ is clear. It remains to show that if the claim is true for some $m \ge 3$ then it is also true for $m+1$. 

Let $T:=\left( U_{m+1}(\Z ;q) L_{m+1}(\Z ; q) \right) ^D$ and $H=\left\{ g\in \SL_{m+1}(\Z) \mid gT=T \right\}$. Since $H$ is a group, it is enough to prove that $H$ contains both $U_{m+1}(\Z ; q)$ (which is clear) and $L_{m+1}(\Z ; q)$.

We embed $L_m(\Z;q)$ and $U_m(\Z ;q)$ in $\SL_{m+1}(\Z; q)$ by the embedding $M \mapsto \left(\begin{matrix} M &0 \\ 0 & 1 \end{matrix}\right)$. 
We denote the abelian group  $\la e_{i,m+1}(a) \mid 1 \le i \le m,\ a \in q\Z \ra$ by $C_{m+1}(\Z; q)$ and the abelian group $\la e_{m+1,i}(q) \mid 1\le i \le m,\ a \in q\Z \ra$ by $R_{m+1}(\Z; q)$. We have that $U_{m+1}(\Z; q)=U_m(\Z; q) \ltimes C_m(\Z; q)$, $L_{m+1}(\Z; q)=L_m(\Z; q) \ltimes R_m(\Z; q)$, and that $U_m(\Z; q)$ and $L_m(\Z; q)$ each normalizes both $C_m(\Z; q)$ and $R_m(\Z; q)$.  The induction hypothesis implies that:
\[
L_m(\Z: q) \left( U_{m+1}(\Z: q)L_{m+1}(\Z; q) \right) ^D =
\]
\[
= L_m(\Z: q) \left( U_m(\Z: q)  C_m(\Z: q)  L_m(\Z: q)  R_m(\Z: q) \right) ^D =
\]
\[
= L_m(\Z: q) \left( U_m(\Z: q)  L_m(\Z: q) \right) ^D \cdot  \left(C_m(\Z: q)R_m(\Z: q)\right)^D = 
\]
\[
=\left( U_m(\Z: q)  L_m(\Z: q) \right) ^D \cdot \left(C_m(\Z: q) R_m(\Z: q)\right)^D= 
\]
\[
=\left( U_m(\Z: q)  C_m(\Z: q)  L_m(\Z: q)  R_m(\Z: q) \right) ^D =
\]
\[
=\left( U_{m+1}(\Z: q)L_{m+1}(\Z: q) \right) ^D.
\]
Hence, $L_m(\Z: q) \subseteq H$, i.e., for every $1 \leq i < j \leq m$ and $a \in I$, we have $e_{j,i}(a) \in H$. Arguing similarly using the embedding $M \mapsto \left(\begin{matrix} 1 &0 \\ 0 & M \end{matrix}\right)$, we get that $e_{j,i}(a)\in H$, for every $2 \leq i < j \leq m+1$ and $a \in I$. 
It remains to show that for every $a \in I$, $e_{m+1,1}(a)\in H$.

The main theorem of \cite{Men} says that $E^\nom(n,\Z,k)=\SL_n(\Z,k)$ for every  $k \in \N^+$. 
Thus, Lemma \ref{lemma: Gen.Tits}  implies that $\SL_{m+1}(\Z;q^2)=E^\nom(m+1,\Z; q^2) \subseteq H$. Since $\SL_{m+1}(\Z;q)/\SL_{m+1}(\Z;q^2)$ is abelian, $e_{m+1,1}(a)U_{m+1}(\Z: q) e_{m+1,1}(a) ^{-1} \subseteq \SL_{m+1}(\Z; q^2) \cdot U_{m+1}(\Z: q)$, for every $a \in I$. It follows that, for every $a \in I$, 
\[
e_{m+1,1}(a) \left( U_{m+1}(\Z: q)L_{m+1}(\Z: q) \right) ^D =
\]
\[
= e_{m+1,1}(a) U_{m+1}(\Z: q) e_{m+1,1}(a) ^{-1} e_{m+1,1}(a) L_{m+1}(\Z: q) \left( U_{m+1}(\Z: q)L_{m+1}(\Z: q) \right) ^{D-1} \subseteq 
\]
\[
\subseteq \SL_{m+1}(\Z; q^2) \cdot U_{m+1}(\Z: q) L_{m+1}(\Z: q) \left( U_{m+1}(\Z: q)L_{m+1}(\Z: q) \right) ^{D-1} =
\]
\[
=\left( U_{m+1}(\Z: q)L_{m+1}(\Z: q) \right) ^D.
\]
In particular, for every $a \in I$, $e_{m+1,1}(a)\in H$. Hence, $L_{m+1}(\Z: q) \subseteq H$ as desired. \end{proof}

\begin{proof}[Proof of Theorem \ref{thm:intro.bounded.generation}]

Proposition \ref{claim:bdd.gen.3}
implies that there is a constant $C=D(3)$ such that $\left( U_3(\mathbb{Z};q) ) L_3(\mathbb{Z};q) ) \right)^{C}= \la U_3(\mathbb{Z};q) L_3(\mathbb{Z};q)  \ra $. Lemma \ref{lem:up} implies that 
$\left( U_n(\mathbb{Z};q) ) L_n(\mathbb{Z};q) ) \right)^{C}= \la U_n(\mathbb{Z};q) L_n(\mathbb{Z};q)  \ra $ for every $n \ge 3$. Proposition \ref{prop:Tits} implies that 
$\left( U_n(\mathbb{Z};q) ) L_n(\mathbb{Z};q) ) \right)^{C}$ contains the congruence subgroup $\SL_n(\Z;q^2)$ and this subgroup has a finite index in $\SL(n,\Z)$. 
\end{proof}

\section{Proof of Theorem \ref{thm:intro.congruence.subgroup} without an explicit bound} \label{sec:congruence} 

We will need the following lemma, which we state without a proof:
\begin{lemma} \label{lem:q.above.diagonal} All upper-triangular matrices $g\in U_n(\mathbb{Z} ; q)$ such that $g_{i,i+1}=q$, for all $i$, are conjugate.
\end{lemma}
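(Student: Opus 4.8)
The plan is to show that any two matrices $g, g' \in U_n(\mathbb{Z};q)$ with all superdiagonal entries equal to $q$ are conjugate by an element of $U_n(\mathbb{Z};q)$ itself (in fact by an upper-triangular unipotent integer matrix, though staying inside $U_n(\mathbb{Z};q)$ is cleaner). The natural approach is to induct on the number of off-diagonal entries, clearing one ``superdiagonal band'' at a time: first I would fix the entries just above the main diagonal (which are already $q$ by hypothesis), then use conjugation to normalize the second superdiagonal, then the third, and so on, each time checking that adjusting a higher band does not disturb the lower bands already put in standard form.

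Concretely, write $g = I + N$ where $N$ is strictly upper-triangular with $N_{i,i+1} = q$. Suppose inductively that $g$ and $g'$ agree on all entries $(i,j)$ with $j - i \le k$ for some $k \ge 1$; I want to find a conjugator bringing $g$'s $(k{+}1)$-band to match $g'$. The key computation is that conjugating $g$ by $e_{r,r+k}(t)$ (for $t \in q\mathbb{Z}$, so that this lies in $U_n(\mathbb{Z};q)$) changes the entries in bands $\le k$ not at all, changes the $(r-1, r+k)$ and $(r, r+k+1)$ entries of $g$ by $\pm qt$ (using that the superdiagonal entries $g_{r-1,r} = g_{r+k,r+k+1} = q$), and changes higher bands in some way we don't care about yet. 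Since $q \cdot (q\mathbb{Z}) = q^2\mathbb{Z}$, this only lets us adjust the $(k{+}1)$-band modulo $q^2$; but the difference $g_{i,j} - g'_{i,j}$ for $j - i = k+1$ is already divisible by... wait — here one must be slightly careful: both $g$ and $g'$ have these entries in $q\mathbb{Z}$, so their difference is in $q\mathbb{Z}$, not obviously $q^2\mathbb{Z}$. The fix is to instead conjugate by $e_{r,r+k-1}(t)$ or more simply by elements $e_{r,r+1}(t)$ with $t \in q\mathbb{Z}$: conjugation by $e_{r,r+1}(t)$ alters the $(r-1,r+1)$ entry by $q t$ and the $(r, r+2)$ entry by $-qt$, hmm, still a factor of $q$ off. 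The cleaner route: conjugate by a general upper unipotent \emph{integer} matrix $u = I + M$ (not necessarily in the congruence subgroup) and observe that $u g u^{-1}$ has $(i, i+1)$-entries still equal to $q$, and its $(k{+}1)$-band is the original $(k{+}1)$-band of $g$ plus $q(M_{i+1,j} - M_{i, j-1})$ plus terms from lower bands of $M$ acting on lower bands of $N$. Since consecutive differences $M_{i+1,j} - M_{i,j-1}$ of free integer parameters can realize any integer sequence with prescribed ``sum pattern'', and the obstruction to matching is exactly a telescoping condition that is automatically satisfied because $\det$ and the lower bands already agree, one solves for $M$ band by band.

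So the steps in order: (1) reduce to conjugating within all unipotent upper-triangular \emph{integer} matrices $U_n(\mathbb{Z}) = U_n(\mathbb{Z};1)$; (2) induct on $k = 1, \dots, n-1$, assuming $g, g'$ agree on bands $< k$ wait, on bands $\le k$, and producing a conjugator supported on band $k$ that fixes bands $\le k$ and corrects band $k+1$; (3) the per-step linear algebra: conjugation by $I + t\,e_{r,r+k}$ acts on the $(k{+}1)$-band affinely, and the associated linear map from $(t_1, \dots, t_{n-k})$ to the induced change in the $(k{+}1)$-band of length $n-k-1$ is (up to the factor $q$ from the superdiagonal) a surjection onto the space of achievable corrections; (4) verify the correction needed actually lies in the image — this is where one uses that $g$ and $g'$ are both in $U_n(\mathbb{Z};q)$ and agree on lower bands, which forces the band-$(k{+}1)$ difference to be a ``gradient,'' i.e.\ of the form $(c_{i+1} - c_i)$, matching the image of the conjugation map.

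The main obstacle I expect is precisely step (4): making rigorous why the difference on the $(k{+}1)$-band is forced into the image of the conjugation action, rather than being an arbitrary vector in $(q\mathbb{Z})^{n-k-1}$. The honest accounting is that conjugation by band-$k$ elements $e_{r,r+k}(t)$ shifts the band-$(k{+}1)$ vector by $q \cdot (t_{r} - t_{r-1})_{r}$ hmm, by a discrete-derivative of the $t$-vector, so its image is the codimension-one subspace of vectors summing to zero; one must check the band-$(k{+}1)$ difference $g - g'$ has vanishing ``total sum,'' and this should come from comparing the $(1, n)$-type corner entries or from a determinant/trace bookkeeping after the lower bands are fixed — possibly it requires also allowing conjugators with nonzero entries in several bands simultaneously so that lower-band entries of the conjugator feed into the band-$(k{+}1)$ correction and absorb the leftover degree of freedom. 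I would organize the induction to carry along conjugators built from \emph{all} bands $\ge k$ at once (equivalently, just say: the set of matrices conjugate to $g$ that still have superdiagonal $\equiv q$ is a single orbit, by a dimension count of the centralizer versus the affine variety of such matrices), which sidesteps the bookkeeping at the cost of a slightly less elementary argument.
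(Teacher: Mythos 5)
The paper states this lemma without proof, so there is no argument of the authors' to compare yours against; judged on its own terms, your band-by-band strategy is the right one, but the write-up stalls exactly at the step you yourself flag as the main obstacle, and your analysis of that step is mistaken. Write $g=I+qM$ with $M$ integral, strictly upper triangular, $M_{i,i+1}=1$, and conjugate by $u=I+T$ where $T$ is supported on band $k$ with \emph{integer} entries $t_1,\dots,t_{n-k}$ at the positions $(r,r+k)$. All terms involving $T$ at least twice, or $T$ against the bands $\geq 2$ of $M$, land in bands $\geq k+2$, so bands $\leq k$ are untouched, and the band-$(k+1)$ entry at $(i,i+k+1)$ changes by exactly $q(t_{i+1}-t_i)$. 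The point you missed is a count: band $k$ carries $n-k$ free parameters while band $k+1$ has only $n-k-1$ entries, and the difference map $\mathbb{Z}^{n-k}\to\mathbb{Z}^{n-k-1}$, $(t_r)\mapsto(t_{i+1}-t_i)$, is surjective (set $t_1=0$ and solve forward); a ``sum-zero'' obstruction would arise only in a cyclic setting or if the endpoint parameters $t_1,t_{n-k}$ were unavailable. So your step (4) needs no argument at all: the band-$(k+1)$ discrepancy, being entrywise in $q\mathbb{Z}$, can be removed completely, and the induction closes. Consequently your two proposed rescues are, respectively, unnecessary and invalid: mixing several bands into the conjugator is not needed, and the parenthetical ``dimension count of the centralizer versus the affine variety'' is not a proof over $\mathbb{Z}$ --- such orbit arguments work over an algebraically closed field, whereas over $\mathbb{Z}$ a single such orbit can a priori split into many integral conjugacy classes, which is precisely what the lemma must exclude. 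You were right, however, to drop the requirement that the conjugator lie in $U_n(\mathbb{Z};q)$: taking $t\in q\mathbb{Z}$ only reaches corrections in $q^2\mathbb{Z}$, and conjugacy in $\SL_n(\mathbb{Z})$ by an integral unipotent matrix is all the lemma needs.

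For the record, there is a shorter argument avoiding the induction. With $g=I+qM$ as above, set $f_j=M^{n-j}e_n$ for $1\leq j\leq n$. Since $M_{i,i+1}=1$ and $M$ is strictly upper triangular, one checks by downward induction that $f_j=e_j+(\text{an integral combination of } e_1,\dots,e_{j-1})$, so the matrix $P$ with columns $f_1,\dots,f_n$ is an integral unipotent upper-triangular matrix; moreover $Mf_j=f_{j-1}$ for $j\geq 2$ and $Mf_1=M^ne_n=0$, i.e.\ $MP=PJ$ with $J$ the regular nilpotent Jordan block. Hence $P^{-1}gP=I+qJ$ for every $g$ as in the lemma, so all such $g$ are conjugate, even by unipotent elements of $\SL_n(\mathbb{Z})$ --- which is also what your corrected induction produces.
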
 

\begin{proof}[Proof of Theorem \ref{thm:intro.congruence.subgroup} without an explicit bound] 
Identify $\SL_2(\Z)$ with its image in $\SL_3(\Z)$ under the embedding $M \mapsto \left(\begin{matrix} M &0 \\ 0 & 1 \end{matrix}\right)$. Since $\SL_2(\Z)$ contains a non-abelian free group there exists $\pm I_2 \ne g \in w(\SL_2(\Z))$. 
There exists $h \in \la e_{1,3}(1),e_{2,3}(1)\ra$ such that $[g,h]=g^{-1}h^{-1}gh$ is a non-trivial element and this element is
conjugate to $e_{1,3}(q)$ for some positive $q \in \N$. For the chosen $g$ and $h$, we have $[g,h]^n=[g,h^n] \in w(\SL_3(\Z))^2$. Since $w(\SL_3(\Z))^2$ is a normal subset, $\la e_{1,3}(q)\ra \subseteq w(\SL_3(\Z))^2$. We will show that the statement of Theorem \ref{thm:intro.congruence.subgroup} holds with respect to $d=q^2$.

We claim that  for any integers $a_1,\ldots,a_{n-1}$, there is $g\in w \left( \SL_n(\mathbb{Z}) \right)^{8} \cap U_n(\mathbb{Z} ; q)$ such that 
for every $i$, $g_{i,i+1}=qa_i$.
Using two different embeddings of the group $\SL_3(\mathbb{Z}) \times \cdots \times \SL_3(\mathbb{Z})$ ($\lfloor n/3 \rfloor$ times) into $\SL_n(\mathbb{Z})$ as block-diagonal matrices, we get that there is a matrix $g^1\in w\left( \SL_n(\mathbb{Z}) \right)^{4} \cap U_n(\mathbb{Z} ; q)$ such that $g^1_{i,i+1}=q a_i$ if $i \equiv 1 \text{ (mod 3)}$ and $g^1_{i,i+1}=0$ otherwise. Using one  embedding of the group $\SL_3(\mathbb{Z}) \times \cdots \times \SL_3(\mathbb{Z})$ ($\lfloor n/3 \rfloor$ times) into $\SL_n(\mathbb{Z})$ as block-diagonal matrices, we get that there is a matrix $g^2\in w\left( \SL_n(\mathbb{Z}) \right)^{2} \cap U_n(\mathbb{Z} ; q)$ such that $g^2_{i,i+1}=q a_i$ if $i \equiv 2 \text{ (mod 3)}$ and $g^2_{i,i+1}=0$ otherwise. Similarly, there is $g^3 \in w\left( \SL_n(\mathbb{Z}) \right)^{2} \cap U_n(\mathbb{Z} ; q)$  such that $g^3_{i,i+1}=q a_i$ if $i \equiv 3 \text{ (mod 3)}$ and $g^3_{i,i+1}=0$ otherwise. The matrix $g=g^0g^1g^2\in w\left( \SL_n(\mathbb{Z}) \right)^{8} \cap U_n(\mathbb{Z} ; q)$ satisfies $g_{i,i+1}=q a_i$. The proof of the claim in now complete. 

It follows from Lemma \ref{lem:q.above.diagonal} that $w(\SL_n(\mathbb{Z}))^{8}$ contains all elements $g\in U_n(\mathbb{Z};q)$ such that $g_{i,i+1}=q$ for every $i$.

Next, we claim that $U_n(\mathbb{Z} ; q) \subseteq w\left( \SL_n(\mathbb{Z}) \right)^{16}$. Indeed, let $h\in U_n(\mathbb{Z} ; q)$.  
There is an element $f\in w\left( \SL_n(\mathbb{Z}) \right)^{8} \cap  U_n(\mathbb{Z} ; q)$ such that
for every $i$,  $f_{i,i+1}=q-h_{i,i+1}$. Then $hf\in U_n(\mathbb{Z};q)$ and for every $i$, $(fh)_{i,i+1}=q$, so $fh\in (w \left( \SL_n(\mathbb{Z}) \right) ^{8}$. Since 
$ w\left( \SL_n(\mathbb{Z}) \right)^{8}$ is symmetric,
it follows that $h\in w\left( \SL_n(\mathbb{Z}) \right)^{16}$. Similarly, $L_n(\mathbb{Z} ; q) \subseteq w\left( \SL_n(\mathbb{Z}) \right)^{16}$.

By Theorem \ref{thm:intro.bounded.generation}, there is a constant $C$ (independent of $q$) such that $$\la U_n(\Z;q)U_n(\Z;q)  \ra =(U_n(\Z;q)U_n(\Z;q))^C \subseteq w\left( \SL_n(\mathbb{Z}) \right)^{32C}.$$ Propositon \ref{prop:Tits} implies that $\SL_n(\Z,q^2) \le \la U_n(\Z;q)U_n(\Z;q)  \ra$.
\end{proof} 

\section{Proof of Theorems \ref{thm:intro.fix.n} and \ref{thm:intro.fix.w}}

In order to deduce Theorems \ref{thm:intro.fix.n} and \ref{thm:intro.fix.w} from Theorem \ref{thm:intro.congruence.subgroup}, we need to study word values in $\SL_n(\mathbb{Z} / q\Z)$ uniformly in $q$. Equivalently, we need to study word values in $\SL_n ( \widehat{\mathbb{Z}})$ where $\widehat{\mathbb{Z}}$ is the profinite completion of $\Z$. We will use a version of the open mapping theorem which is well known, but for which we were unable to find a reference.

For $a\in \mathbb{Z}_p^n$, denote $\| a\|=\max \left\{ |a_i|_p \right\}$, where $|a|_p$ is the $p$-adic valuation of $a$. The function $d(a,b)=\|a-b\|$ is a metric on $\mathbb{Z}_p^n$. Let $X \subset \mathbb{A}_{\mathbb{Z}_p} ^n$ be an affine $\mathbb{Z}_p$-scheme, i.e. the zero locus of a collection of polynomials in $\mathbb{Z}_p[x_1,\ldots,x_n]$. We denote the set of solutions of $X$ with coordinates in $\mathbb{Z}_p$ by $X(\mathbb{Z}_p)$. The restriction of $d$ to $X(\mathbb{Z}_p)$ is a metric on $X(\mathbb{Z}_p)$\footnote{This metric is independent of the affine embedding, but we will not use this fact.}. Let $\mathbb{Z}_p[X]$ be the ring of regular functions on $X$ (the restrictions of polynomials with $\mathbb{Z}_p$ coefficients to $X$). For $f\in \mathbb{Z}_p[X]$, we define $\val_p(f)=\max \left\{ k \mid f\in p^k \mathbb{Z}_p[X] \right\}$. More generally, if $f:X \rightarrow Y$ is a map of affine $\mathbb{Z}_p$-schemes, we define $\val_p(f)$ as the minimum of the valuations of its coordinates. Note that if $\val(f) \geq k$, then $d(f(a),f(b)) \leq p^{-k}d(a,b)$, for every $a,b\in X(\mathbb{Z}_p)$.

Recall that $X$ is called smooth at $a\in X(\mathbb{Z}_p)$ if there are $\psi_1,\ldots,\psi_c\in \mathbb{Z}_p[x_1,\ldots,x_n]$ such that $X$ is the common zero locus of $\psi_i$ and the reductions of $\nabla \psi_i(a)$ modulo $p$ are linearly independent. In this case $n-c$ is called the dimension of $X$ at $a$.

\begin{lemma}\label{lem:4.1} Let $X \subseteq \mathbb{A}_{\mathbb{Z}_p}^n$ be a $\mathbb{Z}_p$-scheme and $a\in X(\mathbb{Z}_p)$. Assume that $X$ is smooth in $a$. Then there is a subset $S \subset \left\{ 1,\ldots,n \right\}$ such that the coordinate projection $\pi : \mathbb{Z}_p ^n \rightarrow \mathbb{Z}_p^S$ satisfies: \begin{enumerate}
\item\label{item:smooth.1} The restriction of $\pi$ to $X(\mathbb{Z}_p)\cap B(a,p ^{-1})$ is one-to-one where $B(a,p ^{-1})$ is the  closed ball of radius $p^{-1}$ around $a$. 
\item\label{item:smooth.2} $\pi (T_aX(\mathbb{Z}_p))=\mathbb{Z}_p^{S}$.
\end{enumerate} 
\end{lemma}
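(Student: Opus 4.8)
The plan is to derive the lemma from the Jacobian criterion and the $p$-adic inverse function theorem, using the standard device of completing the defining equations to a square system. Unwinding the smoothness hypothesis, there are $\psi_1,\dots,\psi_c\in\mathbb{Z}_p[x_1,\dots,x_n]$ cutting out $X$ such that the reductions modulo $p$ of $\nabla\psi_1(a),\dots,\nabla\psi_c(a)$ are linearly independent in $\mathbb{F}_p^n$; equivalently, the $c\times n$ matrix $J:=\bigl(\tfrac{\partial\psi_i}{\partial x_j}(a)\bigr)_{i,j}$ has rank $c$ over $\mathbb{F}_p$. Hence some $c$-element subset $T\subseteq\{1,\dots,n\}$ indexes $c$ columns of $J$ forming a square submatrix $J_T$ with $\det J_T\in\mathbb{Z}_p^{\times}$. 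I would take $S:=\{1,\dots,n\}\smallsetminus T$, so that $|S|=n-c$; this is the required subset.

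For the second assertion, note that the tangent scheme of $X$ at $a$ for this presentation is $a+\ker J$, so $T_aX(\mathbb{Z}_p)=\{a+v\mid v\in\mathbb{Z}_p^n,\ Jv=0\}$. Splitting $v=(v_T,v_S)$ along $\mathbb{Z}_p^n=\mathbb{Z}_p^{T}\oplus\mathbb{Z}_p^{S}$, the equation $Jv=0$ reads $J_Tv_T=-J_Sv_S$, and since $J_T$ is invertible over $\mathbb{Z}_p$ it determines $v_T$ uniquely from an arbitrary $v_S\in\mathbb{Z}_p^{S}$. As $\pi$ is the coordinate projection onto the $S$-coordinates, it follows that $\pi$ restricts to a bijection of $T_aX(\mathbb{Z}_p)$ onto $\mathbb{Z}_p^{S}$.

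For the first assertion, I would form the polynomial map $F:\mathbb{Z}_p^n\to\mathbb{Z}_p^n$ whose first $c$ coordinate functions are $\psi_1,\dots,\psi_c$ and whose remaining $n-c$ coordinate functions are the $x_j$ with $j\in S$. Ordering the source coordinates so that those indexed by $T$ precede those indexed by $S$, the Jacobian of $F$ at $a$ is block triangular with diagonal blocks $J_T$ and the identity matrix of size $n-c$, so its determinant is $\pm\det J_T\in\mathbb{Z}_p^{\times}$. The substantive point is a $p$-adic mean-value identity: since every coordinate of $F$ is a polynomial, one can write $F(x)-F(y)=M(x,y)\,(x-y)$ with $M(x,y)$ an $n\times n$ matrix depending polynomially on $x,y$ over $\mathbb{Z}_p$ whose reduction modulo $p$ equals that of the Jacobian of $F$ at $a$ whenever $x\equiv y\equiv a\pmod p$ — this is seen by expanding each monomial difference $x^{\alpha}-y^{\alpha}$ as a $\mathbb{Z}_p$-linear combination of $x_1-y_1,\dots,x_n-y_n$ and reducing. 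Consequently, for $x,y\in B(a,p^{-1})$ the matrix $M(x,y)$ is invertible over $\mathbb{F}_p$, hence over $\mathbb{Z}_p$, so $F(x)=F(y)$ forces $x=y$. Finally, if $b,b'\in X(\mathbb{Z}_p)\cap B(a,p^{-1})$ satisfy $\pi(b)=\pi(b')$, then $\psi_i(b)=\psi_i(b')=0$ for all $i$ and $b,b'$ agree in all $S$-coordinates, so $F(b)=F(b')$ and therefore $b=b'$.

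The one step that needs care is the mean-value identity together with the congruence for $M(x,y)$ on the ball $B(a,p^{-1})$. It is purely algebraic — no $p$-adic integration or division of $p$-adic integers occurs — but it is exactly where the radius $p^{-1}$ enters, and it is the heart of the $p$-adic implicit function theorem underlying the lemma.
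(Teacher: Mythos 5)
Your proof is correct and takes essentially the same approach as the paper: smoothness yields a $c\times c$ minor of the Jacobian that is a unit in $\mathbb{Z}_p$, $S$ is the complementary set of coordinates, part (2) follows by inverting that minor, and part (1) reduces to the fact that on the ball of radius $p^{-1}$ the linear term dominates the quadratic error in the ultrametric. The only difference is packaging: you complete to a square system $F$ and use the factorization $F(x)-F(y)=M(x,y)(x-y)$ with $M(x,y)\equiv DF(a) \pmod p$, whereas the paper argues directly from the estimate $|\psi_i(x)-\psi_i(y)-\langle \nabla\psi_i(x),x-y\rangle|\le\|x-y\|^2$ together with the fact that matrices invertible over $\mathbb{Z}_p$ preserve the norm --- the same $p$-adic inverse-function-theorem mechanism.
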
 

\begin{proof} Let $\psi_i$ be as in the definition of smoothness. After permutation of the indices, we can assume that the $c$-by-$c$ matrix $\left( \frac{\partial \psi_i}{\partial x_j}(a) \right)$ is invertible over $\mathbb{Z}_p$. For any $f\in \mathbb{Z}_p[x_1,\ldots,x_n]$ and any $a,b\in \mathbb{Z}_p^n$ with $0<d(a,b)<1$, we have $|f(a)-f(b)-\langle \nabla f(a),a-b \rangle| \leq \| a-b \|^2 < \| a-b \|$. If $a,b\in X(\mathbb{Z}_p)$ and $d(a,b)<1$, we have $\psi_i(a)=\psi_i(b)=0$, so $| \langle \nabla \psi_i(a),a-b \rangle | < \| a-b \|$. If, in addition, $\pi(a)=\pi(b)$, write $a-b=(v,0)$, where $v\in p \mathbb{Z}_p^c$ and then
\[
\left \| \left( \frac{\partial \psi_i}{\partial x_j}(a) \right)v \right\| = \max \left\{ | \langle \nabla \psi_i(a),a-b \rangle | \right\} < \| v \|.
\] 
Since invertible matrices do not decrease norm, this is a contradiction. This complete the proof of \eqref{item:smooth.1}. Denoting $S:=\{c+1,\ldots,n\}$, \eqref{item:smooth.2} readily follows form the assumption  that $\left( \frac{\partial \psi_i}{\partial x_j}(a) \right)$ is invertible.
\end{proof} 

\begin{lemma} \label{lem:open.mapping} Let $X,Y$ be affine $\mathbb{Z}_p$-schemes. Let $f:X \rightarrow Y$ be a morphism, let $a\in X(\mathbb{Z}_p)$, and let $k \geq 0$ be an integer. Suppose that \begin{enumerate}
\item $\val_p(f) \geq k$.
\item $df(a)(T_aX(\mathbb{Z}_p)) \supseteq p^kT_{f(a)}Y(\mathbb{Z}_p)$.
\item $X$ is smooth at $a$ and $Y$ is smooth at $f(a)$. 
\end{enumerate} 
Then $f(X(\mathbb{Z}_p))$ contains the closed ball of radius $p^{-k-1}$ around $f(a)$. 
\end{lemma}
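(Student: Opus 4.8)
The plan is to prove this by a $p$-adic Newton's method (Hensel-type) iteration, using Lemma~\ref{lem:4.1} to reduce both $X$ near $a$ and $Y$ near $f(a)$ to open subsets of affine space, so that $f$ becomes (after these identifications) an honest map $\mathbb{Z}_p^d \to \mathbb{Z}_p^e$ between balls whose differential at the base point is surjective onto $p^k$ times the target ball. First I would apply Lemma~\ref{lem:4.1} to $X$ at $a$ to get a coordinate subset $S$ and a projection $\pi_X$ that is injective on $X(\mathbb{Z}_p) \cap B(a,p^{-1})$ with $\pi_X(T_aX(\mathbb{Z}_p)) = \mathbb{Z}_p^S$; the implicit function theorem (which follows from the same gradient estimate used in Lemma~\ref{lem:4.1}) gives that $\pi_X$ restricted to $X(\mathbb{Z}_p) \cap B(a,p^{-1})$ is in fact a bijection onto $B(\pi_X(a),p^{-1}) \subseteq \mathbb{Z}_p^S$, with an inverse section $\sigma_X$ satisfying $\val_p(\sigma_X - (\text{affine part})) \ge 2$ on that ball. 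Do the same for $Y$ at $f(a)$, obtaining $\pi_Y$ and a section $\sigma_Y$. Replacing $f$ by $g := \pi_Y \circ f \circ \sigma_X$, it suffices to show: if $g : B(0,p^{-1}) \subseteq \mathbb{Z}_p^d \to \mathbb{Z}_p^e$ has $\val_p(g - g(0)) \ge k$ near $0$ (this follows from $\val_p(f)\ge k$ together with the estimate $\val_p(\pi_Y)\ge 0$, $\val_p(\sigma_X)\ge 0$ — one must check the valuation hypothesis transfers, which it does since composition does not decrease valuation) and $dg(0)(\mathbb{Z}_p^d) \supseteq p^k\mathbb{Z}_p^e$, then the image of $g$ contains $B(g(0), p^{-k-1})$.

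The core of the argument is then the following quantitative inverse-function statement for $g$. Write $g(x) = g(0) + dg(0)\cdot x + r(x)$ where, by smoothness and the quadratic Taylor estimate $\|f(a)-f(b)-\langle\nabla f(a),a-b\rangle\|\le\|a-b\|^2$ already used in Lemma~\ref{lem:4.1}, we have $\|r(x) - r(y)\| \le \|x-y\|^2$ for $x,y \in B(0,p^{-1})$, hence also $\val_p$ of the quadratic part is at least what we need after we also know $\val_p(g)\ge k$ forces the linear and remainder parts to land in $p^k\mathbb{Z}_p^e$. By hypothesis (2) there is a $\mathbb{Z}_p$-linear $L : \mathbb{Z}_p^e \to \mathbb{Z}_p^d$ with $dg(0) \circ L = p^k \cdot \mathrm{id}_{\mathbb{Z}_p^e}$ and $\|L\| \le 1$. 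Given a target $y_0 \in B(g(0),p^{-k-1})$, I would define the iteration $x_0 = 0$, $x_{m+1} = x_m + p^{-k}L\,(y_0 - g(x_m))$, and show by induction that $x_m \in B(0,p^{-1})$ and $\|y_0 - g(x_m)\| \le p^{-k}\|x_m - x_{m-1}\|$ decreases geometrically: indeed $y_0 - g(x_{m+1}) = (y_0 - g(x_m)) - dg(0)(x_{m+1}-x_m) - (r(x_{m+1}) - r(x_m)) = -(r(x_{m+1})-r(x_m))$, whose norm is $\le \|x_{m+1}-x_m\|^2$, while $\|x_{m+1}-x_m\| = \|p^{-k}L(y_0-g(x_m))\| \le p^k\|y_0-g(x_m)\|$. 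Starting from $\|y_0 - g(0)\| \le p^{-k-1}$, one checks $\|x_1 - x_0\| \le p^{-1}$, and the product of the quadratic contraction with the $p^k$-blowup still contracts because $p^k \cdot (p^{-1})^{2} \cdot p^{-k}\cdots$ — more carefully, setting $\delta_m = \|x_{m+1}-x_m\|$ one gets $\delta_{m+1} \le p^k\delta_m^2 \le p^k (p^{-1}\cdot p^{-k}\cdot(\text{stuff}))^2$; tracking the exponents shows $\delta_m \to 0$ and the partial sums stay in $B(0,p^{-1})$, so $x_m$ converges to some $x_\infty \in X'(\mathbb{Z}_p)$ (completeness of $\mathbb{Z}_p$) with $g(x_\infty) = y_0$. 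Pushing back through $\sigma_X$ and noting $\pi_Y$ is injective on the relevant ball gives a genuine point of $X(\mathbb{Z}_p)$ mapping to the prescribed element of $B(f(a),p^{-k-1})$.

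The main obstacle is bookkeeping the valuation/contraction estimates correctly: the differential is only surjective onto $p^k$ times the target, so each Newton step \emph{gains} a factor $p^{-2}$ from the quadratic remainder but \emph{loses} a factor $p^k$ from inverting $dg(0)$ up to $p^k$, and one must verify that with the initial slack $\|y_0 - g(0)\| \le p^{-k-1}$ (the "extra" $p^{-1}$ beyond $p^{-k}$) the net effect is still contracting and that every iterate stays within the ball $B(0,p^{-1})$ where the smooth chart and the quadratic estimate are valid. A secondary, more routine obstacle is the transfer step: confirming that the hypotheses $\val_p(f) \ge k$ and the differential condition survive conjugation by the charts $\sigma_X, \pi_Y$ from Lemma~\ref{lem:4.1} — this is immediate since composition of $\mathbb{Z}_p$-morphisms does not decrease $\val_p$ and $d(\pi_Y \circ f \circ \sigma_X)(0) = d\pi_Y \circ df(a) \circ d\sigma_X(0)$ with $d\pi_Y, d\sigma_X(0)$ identifications onto the relevant $\mathbb{Z}_p$-lattices.
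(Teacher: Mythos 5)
Your overall route (reduce to affine space, then run a Newton/chord iteration with a bounded right inverse $L$ of the differential) is close in spirit to the paper's proof, but the core quantitative step as you state it is wrong, and it fails exactly in the case the lemma is needed for. The estimate $\|r(x)-r(y)\|\le\|x-y\|^2$ for the remainder $r(x)=g(x)-g(0)-dg(0)x$ centered at $0$ is false: already for $r(x)=p^kx^2$ with $\|x\|=\|y\|=p^{-1}$ and $\|x-y\|$ very small one has $\|r(x)-r(y)\|=p^{-k}\|x-y\|\,\|x+y\|\gg\|x-y\|^2$. The correct bound for a series with no constant or linear term and coefficients divisible by $p^k$ is $\|r(x)-r(y)\|\le p^{-k}\|x-y\|\max(\|x\|,\|y\|)$; note in particular that you dropped the crucial factor $p^{-k}$ coming from $\val_p(f)\ge k$. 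With the bounds you actually wrote down ($e_{m+1}\le\delta_m^2$ and $\delta_m\le p^k e_m$, hence $\delta_{m+1}\le p^k\delta_m^2$ with only $\delta_0\le p^{-1}$), the recursion does not contract once $k\ge1$: for $k=1$ it stalls at $\delta_m\le p^{-1}$ and for $k\ge2$ the bounds grow, so "tracking the exponents" does not close, and $k\ge 1$ is precisely the situation in which the lemma is applied later (Case 2 of Lemma \ref{lem:width.adelic.fix.n}). The repair is exactly the point of the paper's argument: since the coefficients of the polynomial map are divisible by $p^k$, the Taylor remainder centered at the \emph{current} iterate $a_\ell\in p\mathbb{Z}_p^n$ satisfies $\|f(a_\ell+\epsilon)-f(a_\ell)-df(a_\ell)(\epsilon)\|\le p^{-k}\|\epsilon\|^2$, and $df(a')(\mathbb{Z}_p^n)\supseteq p^k\mathbb{Z}_p^m$ holds at every $a'\in p\mathbb{Z}_p^n$ (reduce mod $p$), so the loss of $p^k$ from inverting the differential is cancelled by the gain of $p^{-k}$ from the coefficients and the error improves by a factor $p^{-1}$ per step. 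Alternatively you may keep your fixed $L$ inverting $dg(0)$, but then you must use the product-form estimate above, which gives linear contraction $e_{m+1}\le p^{-1}e_m$ inside $B(0,p^{-1})$; either way the bookkeeping must be redone.

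A secondary gap is in your reduction to affine space: you need an actual section $\sigma_X$ of the coordinate projection, i.e.\ surjectivity of $\pi_X$ onto $B(\pi_X(a),p^{-1})$, but Lemma \ref{lem:4.1} gives only injectivity on $X(\mathbb{Z}_p)\cap B(a,p^{-1})$ and surjectivity on tangent spaces; the "implicit function theorem" you invoke is itself a Hensel-type statement that would have to be proved (by the same kind of iteration), and after composing with the resulting analytic, non-polynomial section you must rejustify the integrality and $p^k$-divisibility of the coefficients of $g=\pi_Y\circ f\circ\sigma_X$ before any Taylor estimate applies. The paper avoids both issues: on the $X$-side it replaces $f$ by $F(x)=(f(x),p^k\varphi_1(x),\ldots,p^k\varphi_{n-d}(x))$ defined on the ambient affine space, and on the $Y$-side it uses only the injectivity part of Lemma \ref{lem:4.1}; adopting that reduction and the corrected contraction estimate would make your argument complete.
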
 

\begin{proof} We first reduce the claim to the case where $X$ is an affine space. Suppose that $X \subset \mathbb{A} ^n$ is $d$-dimensional. By smoothness, it is given as the zero locus of $\varphi_1,\ldots,\varphi_{n-d}\in \mathbb{Z}_p[x_1,\ldots,x_n]$ such that the reductions modulo $p$ of $\nabla \varphi_i(a)$ are linearly independent. Consider the map $F:\mathbb{A} ^n \rightarrow Y \times \mathbb{A} ^{n-d}$ given by $x \mapsto (f(x),p^k\varphi_1(x),\ldots,p^k\varphi_{n-d}(x))$. Then $F$ satisfies the conditions of the lemma. If the claim holds for $F$, then it holds for $f$.

Next, we reduce the claim to the case where $X$ and $Y$ are affine spaces. Indeed, let $e$ be the dimension of $Y$ at $f(a)$. Item \eqref{item:smooth.1} of Lemma \ref{lem:4.1} allows us to assume that the coordinate projection $\pi: Y \rightarrow \mathbb{A} ^{e}$ is one-to-one on $B(f(a),p ^{-1})$. 
Item \eqref{item:smooth.2} of Lemma \ref{lem:4.1} implies that the function $\pi \circ f$ satisfies the conditions of the lemma, and the claim for $\pi \circ f$ implies the claim for $f$.

Finally, we prove the claim in the case $X=\mathbb{A} ^n$ and $Y=\mathbb{A} ^m$. We can assume that $a=0$ and $f(a)=0$. Since the coefficients of $f$ are in $\mathbb{Z}_p$, we have that $df(a')(\mathbb{Z}_p^n) \supseteq p^k \mathbb{Z}_p^m$, for any $a'\in p\mathbb{Z}_p^n$. Let $b\in p^{k+1}\mathbb{Z}_p^m$. We will construct a sequence $a_\ell \in p \mathbb{Z}_p^n$ such that $\| f(a_\ell)-b\| < p^{-k-\ell}$. Taking a limit point of the $a_\ell$s, we get that $b\in f(\mathbb{Z}_p^n)$.

The sequence $a_\ell$ is defined by recursion starting from $a_0=0$. Given $a_\ell$, the assumptions imply that there is $\epsilon \in p^{\ell+1} \mathbb{Z}_p^n$ such that $df(a_\ell)(\epsilon)=b-f(a_\ell)$. We have
\[
\| f(a_\ell + \epsilon)-b \| = \| f(a_\ell +\epsilon)-f(a_\ell)-df(a_\ell)(\epsilon)+df(a_\ell)(\epsilon)+f(a_\ell)-b\| = 
\]
\[
\| f(a_\ell +\epsilon)-f(a_\ell)-df(a_\ell)(\epsilon) \| \leq p^{-k}\| \epsilon \|^2 < p^{-k-\ell-1},
\]
since the function $x \mapsto f(a_\ell+x)-f(a_\ell)-df(a_\ell)(x)$ is a polynomial without constant or linear term and its coefficients are divisible by $p^k$.
\end{proof} 

%

\begin{definition} For elements $g,h\in \SL_n$, let $\Phi_{g,h}:\SL_n \times \SL_n \rightarrow \SL_n$ be the map $\Phi^R_{g,h}(x,y)=g^x h^y$.
\end{definition} 

\begin{lemma} \label{lem:diff.cc} Let $n \ge 3$ and assume that $a,b\in \SL_n(\mathbb{F}_q)$ generate $\SL_n(\mathbb{F}_p)$ where $\mathbb{F}_q$ is a finite field of order $q$. Then the differential of $\Phi_{a,b}$ at $(1,1)$ is onto.
\end{lemma}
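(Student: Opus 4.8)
The plan is to compute the differential of $\Phi_{a,b}$ at $(1,1)$ explicitly and then identify its image inside $T_{ab}\SL_n \cong \mathfrak{sl}_n(\F_q)$ by a duality argument based on the trace form. First I would differentiate the two curves $t \mapsto \Phi_{a,b}(\exp(tX),1)$ and $s \mapsto \Phi_{a,b}(1,\exp(sY))$ at the identity; using right translation by $(ab)^{-1}$ to trivialize $T_{ab}\SL_n$, a short computation gives
\[
d\Phi_{a,b}|_{(1,1)}(X,Y) = (\operatorname{Ad}(a)-I)X + \operatorname{Ad}(a)(\operatorname{Ad}(b)-I)Y .
\]
(The exact form depends on the convention for $g^x$, but the conclusion below is insensitive to it, since $\operatorname{Ad}(a^{-1})-I$ and $\operatorname{Ad}(a)-I$ have the same image.) A one-line manipulation — writing $\operatorname{Ad}(a)(\operatorname{Ad}(b)-I)Y = (\operatorname{Ad}(b)-I)Y + (\operatorname{Ad}(a)-I)(\operatorname{Ad}(b)-I)Y$ — shows that the image of this linear map is exactly the symmetric space
\[
W := \image(\operatorname{Ad}(a)-I) + \image(\operatorname{Ad}(b)-I),
\]
and since conjugation preserves trace, $W \subseteq \mathfrak{sl}_n(\F_q)$.

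Next I would work in the ambient algebra $\Mat_n(\F_q)=\mathfrak{gl}_n(\F_q)$ with the non‑degenerate pairing $\langle X,Y\rangle = \tr(XY)$; working here rather than in $\mathfrak{sl}_n$ sidesteps the degeneracy of the trace form on $\mathfrak{sl}_n$ when $p\mid n$, and it costs nothing because $(\operatorname{Ad}(g)-I)$ kills the scalars, so its image on $\mathfrak{gl}_n$ coincides with its image on $\mathfrak{sl}_n$. The pairing is $\operatorname{Ad}(\SL_n)$‑invariant, so each $\operatorname{Ad}(g)$ is an isometry and $(\operatorname{Ad}(g)-I)^{*} = \operatorname{Ad}(g)^{-1}-I = \operatorname{Ad}(g^{-1})-I$. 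Therefore
\[
\image(\operatorname{Ad}(g)-I)^{\perp} = \ker(\operatorname{Ad}(g^{-1})-I) = \{\, x \in \Mat_n(\F_q) : xg = gx \,\},
\]
the centralizer of $g$ in $\Mat_n(\F_q)$. Taking intersections, $W^{\perp}$ is the algebra of matrices in $\Mat_n(\F_q)$ commuting with both $a$ and $b$, i.e. with the whole subgroup $\langle a,b\rangle$.

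To finish, I would invoke absolute irreducibility. Since $a$ and $b$ generate $\SL_n(\F_p)$, and the natural $n$‑dimensional module of $\SL_n(\F_p)$ is absolutely irreducible for $n \ge 2$, its endomorphism ring over $\F_q$ is $\F_q$; hence $W^{\perp} = \F_q\cdot I$, which is one‑dimensional. Thus $W$ has codimension $1$ in $\Mat_n(\F_q)$, and being contained in $\mathfrak{sl}_n(\F_q)$ — also of codimension $1$ — it must equal $\mathfrak{sl}_n(\F_q)$. Hence $d\Phi_{a,b}|_{(1,1)}$ is onto. The only step needing genuine care is the bookkeeping in the differential computation (signs, and the choice of trivialization of $T_{ab}\SL_n$), together with the observation that the image collapses to the symmetric expression $W$ so that the answer does not depend on those choices; after that, the identification of $W^{\perp}$ with a common centralizer and the appeal to absolute irreducibility are routine, so I do not expect a substantive obstacle.
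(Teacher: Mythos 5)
Your computation of the differential, the identification of its image with $W:=(\operatorname{Ad}(a)-I)\,\mathfrak{sl}_n+(\operatorname{Ad}(b)-I)\,\mathfrak{sl}_n$, and the trace-pairing/common-centralizer strategy are all in the spirit of the paper's proof. The gap is the bridging sentence asserting that passing to $\mathfrak{gl}_n$ ``costs nothing because $\operatorname{Ad}(g)-I$ kills the scalars, so its image on $\mathfrak{gl}_n$ coincides with its image on $\mathfrak{sl}_n$.'' That reasoning needs $\mathfrak{gl}_n=\mathfrak{sl}_n\oplus\mathbb{F}_q\iden$, which holds only when $p\nmid n$. When $p\mid n$ the scalars lie \emph{inside} $\mathfrak{sl}_n$, and the image of $\operatorname{Ad}(g)-I$ on the hyperplane $\mathfrak{sl}_n$ can be strictly smaller than on $\mathfrak{gl}_n$: the restriction to a hyperplane $H$ has strictly smaller image exactly when $\ker(\operatorname{Ad}(g)-I)\subseteq H$, and $\ker(\operatorname{Ad}(g)-I)$ is the centralizer of $g$ in $\Mat_n(\mathbb{F}_q)$, which for instance for a regular unipotent $g$ consists of polynomials in $g$ and hence has trace divisible by $n$, i.e.\ lies in $\mathfrak{sl}_n$ when $p\mid n$. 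So what your argument actually establishes is that $W':=(\operatorname{Ad}(a)-I)\,\mathfrak{gl}_n+(\operatorname{Ad}(b)-I)\,\mathfrak{gl}_n$ equals $\mathfrak{sl}_n(\mathbb{F}_q)$; since $W\subseteq W'$ may a priori be proper when $p\mid n$, surjectivity of $d\Phi_{a,b}$ (whose domain is $\mathfrak{sl}_n\times\mathfrak{sl}_n$) is not proved in that case --- and the paper needs the lemma for every prime $p$, including primes dividing $n$.

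The missing step is exactly the extra ingredient in the paper's proof. Computing $W^{\perp}$ honestly, the condition $\tr\bigl(B(\operatorname{Ad}(g)-I)X\bigr)=0$ for all $X\in\mathfrak{sl}_n$ only forces $(\operatorname{Ad}(g^{-1})-I)B\in\mathfrak{sl}_n^{\perp}=\mathbb{F}_q\iden$, i.e.\ $B^{g}-B$ is a \emph{scalar} (not zero) for $g=a^{-1},b^{-1}$. The set of $g$ for which $B^{g}-B$ is scalar is closed under multiplication and contains $a^{-1},b^{-1}$, hence contains $\langle a,b\rangle=\SL_n(\mathbb{F}_p)$; on it, $g\mapsto B^{g}-B$ is a homomorphism into the abelian group $\mathbb{F}_q\iden$, so perfectness of $\SL_n(\mathbb{F}_p)$ forces it to vanish. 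Then $B$ centralizes $\SL_n(\mathbb{F}_p)$ and your absolute-irreducibility argument makes $B$ scalar, giving $W^{\perp}=\mathbb{F}_q\iden$ and $W=\mathfrak{sl}_n$. (When $p\nmid n$ one can instead note that $B^{g}-B$ has trace zero, so a scalar value must vanish --- which is why your shortcut is valid precisely in that case.) With this perfectness step inserted, your proof becomes essentially the paper's argument.
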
 

\begin{proof} After identifying $T_{ab}\SL_n=ab +ab \mathfrak{sl}_n$ and $\mathfrak{sl}_{n}$, the differential of $\Phi_{a,b}$ is $(X,Y) \mapsto (X-X^a)^b +(Y-Y^b)$. Let $\varphi\in \Mat_n(\mathbb{F}_q)^*$ and assume it vanishes on the image of $d \Phi_{a,b}$. Then there is $A\in \Mat_n(\mathbb{F}_q)$ such that $\varphi(X)=\tr(AX)$. For every $Y\in \mathfrak{sl}_n(\mathbb{F}_p)$, $\varphi(Y-Y^b)=0$, so $\tr(Y \cdot (A^{b ^{-1}}-A))=\tr(A(Y-Y^b))=0$.
Thus, $A^{b ^{-1}}-A$ is a scalar. Similarly, using the assumption that $\varphi((X-X^a)^b)=0$, we get that $\left( A^{b ^{-1}} \right)^{a ^{-1}}-A^{b ^{-1}}$ is  a scalar. Using the fact that $A^{b ^{-1} }-A$ is a scalar, we get that $A^{a ^{-1}}-A$ is also a scalar. The set $X=\left\{ g\in \SL_n(\mathbb{F}_q) \mid \text{$A^g-A$ is a scalar} \right\}$ is closed under multiplication. Since $a ^{-1} ,b ^{-1} \in X$, we get $X=\SL_n(\mathbb{F}_q)$. Since $\SL_n(\mathbb{F}_p)$ is perfect and the function $g \mapsto A^g-A$ is a homomorphism between $\SL_n(\mathbb{F}_q)$ and $\mathbb{F}_q\iden$, we get that this homomorphism must be trivial. Hence, $A$ commutes with $\SL_n(\mathbb{F}_q)$, so it must be scalar. It follows that the restriction of $\varphi$ to $\mathfrak{sl}_n(\mathbb{F}_q)$ is zero.
\end{proof} 

\begin{lemma} \label{lem:width.adelic.fix.w} For every non-trivial word $w$, there is $n_0$ such that, for any integer $n \geq n_0$, we have $w(\SL_n(\widehat{\mathbb{Z}}))^7=\SL_n(\widehat{\mathbb{Z}})$. 
\end{lemma}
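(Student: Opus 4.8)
The plan is to exploit the decomposition $\widehat{\mathbb{Z}} = \prod_p \mathbb{Z}_p$, which gives $\SL_n(\widehat{\mathbb{Z}}) = \prod_p \SL_n(\mathbb{Z}_p)$, and to reduce the statement to a local one that is uniform in $p$. Since $w$ is evaluated coordinatewise, the set of plain word values (no inverses) in $\SL_n(\widehat{\mathbb{Z}})$ is \emph{exactly} the product $\prod_p\{\,w(\vec g):\vec g\in\SL_n(\mathbb{Z}_p)^d\,\}$, and it already contains $1$ because $w(\vec 1)=1$. Hence it is enough to prove: \emph{for all sufficiently large $n$ and every prime $p$, $\{\,w(\vec g):\vec g\in\SL_n(\mathbb{Z}_p)^d\,\}^{7}=\SL_n(\mathbb{Z}_p)$}; multiplying these equalities over all $p$ then yields $w(\SL_n(\widehat{\mathbb{Z}}))^{7}\supseteq\prod_p\SL_n(\mathbb{Z}_p)=\SL_n(\widehat{\mathbb{Z}})$. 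In particular one never needs to use inverses, which is what makes the passage from the local statement to the adelic one clean.

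Fix $p$ and work with the congruence filtration $\SL_n(\mathbb{Z}_p)\supseteq\SL_n(\mathbb{Z}_p;p)\supseteq\cdots$. First I would choose word values $g_0=w(\vec\alpha)$ and $h_0=w(\vec\beta)$ in $\SL_n(\mathbb{Z}_p)$ whose reductions modulo $p$ generate $\SL_n(\mathbb{F}_p)$; such a pair exists once $n$ is large (uniformly in $p$, since $|\SL_n(\mathbb{F}_p)|\to\infty$ with $n$ regardless of $p$), using that word values are abundant in $\SL_n(\mathbb{F}_p)$ by the Larsen--Shalev theorem \cite{LS} while non-generating pairs of elements of $\SL_n(\mathbb{F}_p)$ are comparatively rare. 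Because a conjugate of a word value is again a word value, the map $\Phi_{g_0,h_0}(x,y)=g_0^{\,x}h_0^{\,y}$ has image contained in $\{\,w(\vec g)\,\}^{2}$, its value at $(1,1)$ is $g_0h_0$, and by Lemma \ref{lem:diff.cc} its differential at $(1,1)$ is onto modulo $p$. Applying the open mapping theorem (Lemma \ref{lem:open.mapping}) with $k=0$ to $\Phi_{g_0,h_0}$ then shows that $\{\,w(\vec g)\,\}^{2}$ contains the whole coset $g_0h_0\cdot\SL_n(\mathbb{Z}_p;p)$, i.e. a full coset of the first principal congruence subgroup. This step has no exceptional primes: Lemma \ref{lem:diff.cc} is valid over every $\mathbb{F}_p$ and $\val_p(\Phi_{g_0,h_0})\ge 0$ automatically.

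Second, I would use the Larsen--Shalev theorem to cover $\SL_n(\mathbb{F}_p)$, i.e. all cosets of $\SL_n(\mathbb{Z}_p;p)$, by boundedly many products of word values: every element of the simple group $\PSL_n(\mathbb{F}_p)$ is a product of two word values, and word values lift along $\SL_n(\mathbb{Z}_p)\twoheadrightarrow\SL_n(\mathbb{F}_p)\twoheadrightarrow\PSL_n(\mathbb{F}_p)$, so every coset of $\SL_n(\mathbb{Z}_p;p)$ is reached, up to a central correction, by a product of two word values. Combining: one further word value moves the distinguished coset $g_0h_0\cdot\SL_n(\mathbb{Z}_p;p)$ to an arbitrary one, a short separate argument disposes of the central elements $\lambda I$ ($\lambda^n=1$) of $\SL_n(\mathbb{F}_p)$, and a careful count of how many word values each step consumes gives the bound $7$.

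The step I expect to be the real obstacle is the treatment of the center in the second part: the point is not just to cover $\PSL_n(\mathbb{F}_p)$ but $\SL_n(\mathbb{F}_p)$ itself, so one must express every scalar matrix $\lambda I$ with $\lambda^n=1$ as a product of an \emph{absolutely} bounded number of word values, which is delicate because $Z(\SL_n(\mathbb{F}_p))$ can be cyclic of order as large as $n$. One must also make the first step precise, guaranteeing that the generating word-value pair $(g_0,h_0)$ can be chosen with $g_0h_0$ ranging over a large enough subset of $\SL_n(\mathbb{F}_p)$ so that the cosets it produces, together with a bounded number of translations by word values, cover everything. Everything else --- the reduction to the local problem, the application of the open mapping theorem, and the final accounting of constants --- is routine.
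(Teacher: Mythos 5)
Your skeleton is the same as the paper's: reduce to $\SL_n(\mathbb{Z}_p)$ uniformly in $p$, use $\Phi_{g,h}$ together with Lemma \ref{lem:diff.cc} and Lemma \ref{lem:open.mapping} (with $k=0$) to place a full coset of $\SL_n(\mathbb{Z}_p;p)$ inside a bounded power of $w(\SL_n(\mathbb{Z}_p))$, then cover all cosets mod $p$ via Larsen--Shalev and count. The genuine gap is in the second half, and you flag it yourself: you invoke Larsen--Shalev only for the simple quotient $\PSL_n(\mathbb{F}_p)$ and defer the scalar matrices of $\SL_n(\mathbb{F}_p)$ to an unspecified ``short separate argument''. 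No such argument is supplied, and writing $\lambda I$ (with $Z(\SL_n(\mathbb{F}_p))$ possibly cyclic of order close to $n$) as a product of an absolutely bounded number of word values is exactly the non-obvious point. The gap dissolves once you quote \cite{LS} in the form the paper uses: for $n$ large and every prime $p$, $w(\SL_n(\mathbb{F}_p))^2$ contains \emph{every non-scalar matrix of the quasisimple group} $\SL_n(\mathbb{F}_p)$, not merely a covering of $\PSL_n(\mathbb{F}_p)$. This already gives $w(\SL_n(\mathbb{F}_p))^3=\SL_n(\mathbb{F}_p)$, scalars included: if some $x$ satisfied that $xv^{-1}$ is scalar for every word value $v$, then every word value would be a scalar multiple of $x$, and since $1$ is a word value all values would be scalar, contradicting the existence of non-scalar values. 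With that statement no treatment of the center is needed and the count $3+4=7$ comes out.

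A second soft spot is your first step: you want two \emph{single} word values $g_0,h_0$ whose reductions generate $\SL_n(\mathbb{F}_p)$, justified by ``word values are abundant, non-generating pairs are comparatively rare''. The Larsen--Shalev theorem as cited does not give this; making it precise would require quantitative lower bounds on $|w(\SL_n(\mathbb{F}_p))|$ combined with random-generation estimates, none of which you develop. The paper avoids the issue: fix any non-scalar generating pair $a,b$ of $\SL_n(\mathbb{F}_p)$, observe $a,b\in w(\SL_n(\mathbb{F}_p))^2$ by the quasisimple statement above, lift them to $g,h\in w(\SL_n(\mathbb{Z}_p))^2$ (word values lift because $\SL_n(\mathbb{Z}_p)\to\SL_n(\mathbb{F}_p)$ is onto), and run your coset argument with $\Phi_{g,h}$; then $gh\,\SL_n(\mathbb{Z}_p;p)\subseteq w(\SL_n(\mathbb{Z}_p))^4$, the mod-$p$ covering costs $3$, and the total is again $7$. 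Your reduction to the local statement and your use of the open mapping machinery are fine as written.
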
 

\begin{proof} By \cite{LS}, there is $n_0$ such that, if $n \geq n_0$ and $p$ is any prime, then $w(\SL_n(\mathbb{F}_p))^2$ contains all non-scalar matrices. In particular, $w(\SL_n(\mathbb{F}_p))^3=\SL_n(\mathbb{F}_p)$. Fix a prime $p$. Choosing generators $a,b\in \SL_n(\mathbb{F}_p)$ (which are not scalars), there are $g,h\in w(\SL_n(\mathbb{Z}_p))^2$ such that the reductions of $g,h$ modulo $p$ are $a,b$ respectively. We get that $w(\SL_n(\mathbb{Z}_p))^4 \supset \Phi_{g,h}(\SL_n(\mathbb{Z}_p) \times \SL_n(\mathbb{Z}_p))$. It is well known that $\SL_n$ and thus also $\SL_n\times \SL_n$ are smooth at every point. Lemma \ref{lem:diff.cc} and Lemma \ref{lem:open.mapping} imply that  $w(\SL_n(\mathbb{Z}_p))^4$ contains the coset $gh\SL_n(\mathbb{Z}_p;p)$. Hence, $w(\SL_n(\mathbb{Z}_p))^7=\SL_n(\mathbb{Z}_p)$.

Since $w(\SL_n(\widehat{\mathbb{Z}}))=\prod_p w(\SL_n(\mathbb{Z}_p))$, the claim follows.
\end{proof} 

\begin{proof}[Proof of Theorem \ref{thm:intro.fix.w} (without an explicit bound)] By Theorem \ref{thm:intro.congruence.subgroup} and Lemma \ref{lem:width.adelic.fix.w}
\end{proof}

We move on to the proof of Theorem \ref{thm:intro.fix.n}. 

\begin{lemma} \label{lem:ad.irr} For every $n\geq 2$ there is a constant $C$ such that the following holds. If $p$ is a prime and $A\in \mathfrak{sl}_n(\mathbb{F}_p)$ is non-central, then
every element of $\mathfrak{sl}_n(\mathbb{F}_p)$ is equal to the sum of at most $C$ elements of 
 $\left\{ x ^{-1} A x \mid x\in \SL_n(\mathbb{F}_p) \right\}$.
\end{lemma}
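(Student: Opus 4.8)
The plan is to exploit the irreducibility of the adjoint action of $\SL_n(\mathbb{F}_p)$ on $\mathfrak{sl}_n(\mathbb{F}_p)$ together with a counting/bounded-generation argument that is uniform in $p$. First I would record the key algebraic input: for every prime $p$ and every non-central $A \in \mathfrak{sl}_n(\mathbb{F}_p)$, the $\SL_n(\mathbb{F}_p)$-submodule of $\mathfrak{sl}_n(\mathbb{F}_p)$ generated by $A$ is all of $\mathfrak{sl}_n(\mathbb{F}_p)$. This is a standard fact: $\mathfrak{sl}_n$ is an absolutely irreducible module for $\SL_n$ when $p \nmid n$, and when $p \mid n$ one works instead with $\mathfrak{pgl}_n = \mathfrak{gl}_n / (\text{scalars})$, which is irreducible, and notes that a non-central element of $\mathfrak{sl}_n$ has nonzero image there. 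Either way, $A$ is not fixed by all of $\SL_n(\mathbb{F}_p)$ (since $\SL_n$ is perfect and $g \mapsto g^{-1}Ag - A$ would otherwise be a trivial homomorphism forcing $A$ central, exactly as in the proof of Lemma \ref{lem:diff.cc}), and the span of its orbit is a nonzero submodule, hence everything.

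Next I would turn the qualitative statement "$\Span\{x^{-1}Ax\} = \mathfrak{sl}_n(\mathbb{F}_p)$" into a bounded-length statement with a bound $C$ independent of $p$ and of $A$. The cleanest route is a compactness/ultraproduct argument in the spirit of Corollary \ref{claim:bdd.gen.2}: suppose no such $C$ works; then for each $k$ there are a prime $p_k$, a non-central $A_k \in \mathfrak{sl}_n(\mathbb{F}_{p_k})$, and a vector $v_k \in \mathfrak{sl}_n(\mathbb{F}_{p_k})$ not expressible as a sum of $k$ elements of the orbit of $A_k$. Pass to an ultraproduct $F = \prod_k \mathbb{F}_{p_k} / \mathcal{U}$; then $A = (A_k)$ is a non-central element of $\mathfrak{sl}_n(F)$, the orbit of $A$ under $\SL_n(F)$ still spans $\mathfrak{sl}_n(F)$ over $F$ (the irreducibility statement above is first-order, being about an $n^2$-dimensional representation), and $v = (v_k)$ lies in that span, hence is a finite sum $v = \sum_{i=1}^{m} x_i^{-1} A x_i$ with coefficients in $F$ — but after clearing the coefficients into the group elements (using that $\SL_n$ acts, and that one may write $c \cdot x^{-1}Ax$ for a scalar $c$ as a bounded sum of conjugates, or simply absorb scalars by noting $\mathbb{F}_p^\times$-multiples of a conjugate are again conjugates up to rescaling $A$, which is fixed) one gets $v$ as a bounded sum of genuine conjugates of $A$, contradicting the choice of $v_k$ for large $k$.

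Alternatively, and perhaps more transparently, one can argue directly and uniformly: fix a finite set of "test" non-central elements that witnesses all the possible module-theoretic configurations (there are boundedly many, e.g. a root vector $e_{1,2}$, a diagonal element $\mathrm{diag}(1,-1,0,\dots,0)$, etc.), show for each that a specific bounded collection of conjugates already spans — this is an explicit linear-algebra computation over $\mathbb{Z}$ valid modulo every $p$ except finitely many, handled separately — and then observe that any non-central $A$ is $\GL_n(\mathbb{F}_p)$-conjugate (or at least $\SL_n$-conjugate after scaling) to a bounded perturbation of one of the test elements. Once a spanning set of $N = n^2 - 1$ conjugates $\{x_i^{-1}Ax_i\}$ is produced with $N$ bounded, writing an arbitrary target $v$ in that basis gives coefficients in $\mathbb{F}_p$, and $v = \sum c_i x_i^{-1}Ax_i$; finally, to convert the scalar multiples into a bounded number of conjugates without scalars, note $c_i x_i^{-1} A x_i = x_i^{-1}(c_i A) x_i$ and $c_i A$ is conjugate to $A$ only if we are allowed to rescale, so instead use that $\mathfrak{sl}_n(\mathbb{F}_p)$ is spanned by the orbit and that sums of at most $p-1$ copies of a single conjugate realize any scalar multiple — but $p$ is unbounded, so this naive bound fails and one must instead use that the $\SL_n$-orbit of $A$ already contains a basis together with enough elements that the integer relations expressing any $v$ have bounded coefficients; the ultraproduct argument sidesteps this bookkeeping entirely, which is why I would present that one.

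The main obstacle is precisely this last point: controlling the number of summands uniformly in $p$ when the coefficients in a spanning expression live in $\mathbb{F}_p$ with $p \to \infty$. The resolution is to work in the free/generic setting (ultraproduct or, equivalently, the generic fiber), where "span of the orbit is everything" holds with a bounded number of generators and bounded-length expressions by sheer finite-dimensionality, and then specialize; the finitely many small primes where genericity degenerates are dispatched individually. Handling the split $p \mid n$ versus $p \nmid n$ cases for irreducibility is a minor secondary nuisance, resolved by passing to $\mathfrak{pgl}_n$ throughout.
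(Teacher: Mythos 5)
There is a genuine gap at the core of your argument, and it is exactly the point you try to wave away. The ultraproduct does not sidestep the difficulty; it reproduces it. In $F=\prod_k\mathbb{F}_{p_k}/\mathcal{U}$ (of characteristic $0$ in the relevant case $p_k\to\infty$), irreducibility of the adjoint module only gives $v=\sum_{i=1}^m c_i\,x_i^{-1}Ax_i$ with coefficients $c_i\in F$. The set of finite sums of genuine conjugates of $A$ is additively closed and conjugation-invariant, but over an infinite field of characteristic $0$ it is not obviously an $F$-subspace (one cannot even say a priori that it contains $-A$), so spanning does not let you conclude that $v$ is a finite sum of conjugates --- and that is precisely what the contradiction requires, since by construction $v$ fails to be a sum of at most $m$ conjugates for every finite $m$. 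Your two suggested fixes do not work: writing $c\cdot x^{-1}Ax$ as a bounded sum of conjugates of $A$ is essentially the lemma itself (applied to the element $c\,x^{-1}Ax$), and ``scalar multiples of a conjugate are conjugates up to rescaling $A$'' is unavailable because $A$ is fixed; for a general non-central $A$ (say regular semisimple), $cA$ is simply not $\SL_n$-conjugate to $A$. Over a finite field the set of finite sums of conjugates is a subspace only because one may add up to $p$ copies of an element, i.e.\ unboundedly many summands, which is exactly the non-uniformity to be eliminated; your ``direct'' variant runs into the same wall, as you note yourself, and the claim that the orbit contains a basis with respect to which all coordinates can be realized by boundedly many summands is precisely what needs proof and is not supplied.

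The paper's proof consists almost entirely of supplying this missing ingredient. It first observes that for the special element $E_{1,2}(a)$, $a\neq 0$, scalars can be absorbed at bounded cost: $E_{1,2}(a)$ is $\SL_n$-conjugate to $E_{1,2}(ab^2)$ and every element of $\mathbb{F}_p$ is a sum of two squares, whence every matrix in $\mathfrak{sl}_n(\mathbb{F}_p)$ is a sum of at most $2(n^2-1)$ conjugates of $E_{1,2}(a)$. The remaining and larger part reduces an arbitrary non-central $A$ to such an $E_{1,2}(a)$ by producing a bounded sum of conjugates of $A$ that is a nonzero nilpotent matrix: a Jordan-form computation in the nilpotent case, an explicit $n=2$ argument using points on the conic $x^2+y^2+z^2=0$ with $x^{-2}+y^{-2}+z^{-2}\neq 0$, and an induction on $n$ split according to whether $\det A=0$. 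None of these steps has a counterpart in your proposal; even if you insist on the ultraproduct framing (which is legitimate, since for fixed $C$ the conclusion is first-order), you would still have to prove that over the resulting pseudo-finite field of characteristic $0$ every element of $\mathfrak{sl}_n$ is a finite sum of conjugates of $A$, and that is where all the real work lies.
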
 

\begin{proof} It is well known that the only non-trivial $\SL_n(\mathbb{F}_p) $-invariant subspace of  $\mathfrak{sl}_n(\mathbb{F}_p)$ is the subset consisting of scalar matrices. Hence, for every $p$, there is a constant $C_p$ such that every element of $\mathfrak{sl}_n(\mathbb{F}_p)$ is equal to the sum of at most $C_p$ elements of  $\left\{ x ^{-1} A x \mid x\in \SL_n(\mathbb{F}_p) \right\}$. Therefore, in order to find a uniform $C$, we can and will assume that $p$ is large. In particular, we assume that $p \ne 2$. 

For $1 \leq i \neq j \leq n$, let $E_{i,j}(a)$ be the matrix whose $(i,j)$th entry is $a$ and is zero otherwise. Note that $E_{1,2}(a)$ is conjugate to $E_{1,2}(ab^2)$, for every $b\in \mathbb{F}_p$. Since every element in $\mathbb{F}_p$ is a sum of two squares, we get that, for any $a\in \mathbb{F}_p^ \times$, any element of the form $E_{1,2}(b)$ is the sum of at most two conjugates of $E_{1,2}(a)$. In particular, there exists a one dimensional linear subspace of 
$\mathfrak{sl}_n(\mathbb{F}_p)$ such that all its elements are sums of two conjugates of  $E_{1,2}(a)$.
Using the fact that the only $\SL_n(\mathbb{F}_p) $-invariant subsapce of  $\mathfrak{sl}_n(\mathbb{F}_p)$ is the subset consisting of scalar matrices once again, we see that, if $a \ne 0$, then every matrix in $\mathfrak{sl}_n(\mathbb{F}_p)$ is the sum of at most $2(n^2-1)$ conjugates of $E_{1,2}(a)$. Therefore, it is enough to prove that there is a constant $C$ such that, for some $a\in \mathbb{F}_p$, the matrix $E_{1,2}(a)$ is a sum of $C$ conjugates of $A$.
We divide the proof into several steps:
\\ \\
{\bf Step A:} Assume that  $A$ is nilpotent. By using Jordan's normal form we see that $A$ is conjugate to a block diagonal matrix
and each block diagonal matrix has the from 
\begin{equation} \label{eq:Jordanl.form}
\left( \begin{matrix}  0 & a &  &  & \\  & 0 & 1 &  &   \\  &  & \ddots & \ddots &  \\  &  &  & 0 & 1 \\  &  &  &  & 0 \end{matrix} \right).
\end{equation}
where $0 \ne a \in \mathbb{F}_p$ (we cannot assume that $a=1$ since we are conjugating with a matrix  in $\SL_n(\mathbb{F}_p)$  and not $\GL_n(\mathbb{F}_p)$).
A straightforward argument implies that it is enough to deal with the case where there is just one block.
Clearly, we can assume that the dimension of this block is at least 3.  
Then, there exists $\e \in \{-1,1\}$ such that the diagonal matrix $\diag(\e,1,-1,\ldots,(-1)^n)$ belongs to $\SL_n(\mathbb{F}_p)$.
Denote $B:=\diag(\e,1,-1,\ldots,(-1)^n)+E_{2,n}(1)$. Then the $n$th coordinate of the first row of $A+B^{-1} AB$ is non-zero while all the other rows equal zero.
Thus, $A+B^{-1} AB$ is conjugate to $E_{1,2}(a)$ for some non-zero $a$. 
\\ \\
{\bf Step B:} Assume that $n=2$. If $A$ is nilpotent, it is conjugate to $E_{1,2}(a)$, for some $a$, and the claim holds. In general, since $A$ is not scalar, it is conjugate to $\left( \begin{matrix} 0 & a \\ b & 0\end{matrix} \right)$, and we can assume that $b\neq 0$. We claim that, if $p \geq 17$, there are $x,y,z\in \mathbb{F}_p^ \times$ such that $x^2+y^2+z^2=0$ and $x^{-2}+y^{-2}+z^{-2} \neq 0$. If this claim holds, then
\[
\left( \begin{matrix} x &  \\  & x ^{-1} \end{matrix} \right)A\left( \begin{matrix} x ^{-1} &  \\  & x \end{matrix} \right) +
\left( \begin{matrix} y &  \\  & y ^{-1} \end{matrix} \right)A\left( \begin{matrix} y ^{-1} &  \\  & y \end{matrix} \right) +
\left( \begin{matrix} z &  \\  & z ^{-1} \end{matrix} \right)A\left( \begin{matrix} z ^{-1} &  \\  & z \end{matrix} \right)=
\]
\[
=\left(\begin{matrix} 0 & 0 \\ b(x^{-2}+y^{-2}+z^{-2}) & 0 \end{matrix}\right),
\]
which is nilpotent.

To prove the claim, let $X$ be the projective curve defined by $x^2+y^2+z^2=0$. Then $X$ has $p+1$ points over $\mathbb{F}_p$, and at most 6 of them have a zero in some coordinate. At most eight of the points of $X$ satisfy the equation $x^{-2}+y^{-2}+z^{-2}=0$ (these all have the form $[x:y:1]$, where $x^4+x^2+1=0$ and $y=\pm x ^{-1}$). In particular, if $p \geq 17$, the claim is true.
\\ \\

{\bf Step C:} Assume $n>2$ and the claim is true for all numbers smaller than $n$. We consider the following cases:

{\bf Case 1C: } Assume that $\det A =0$. By conjugating $A$ we can assume that it is of the form 
\begin{equation} 
\left( \begin{matrix}  0 & * \\  0 & B  \end{matrix} \right)
\end{equation} 
where $B \in \mathfrak{sl}_{n-1}(\mathbb{F}_p)$. 
If $B=0$ then $A$ is a nilpotent matrix and we are done by Step 1. Otherwise, we can assume that $p>n-1$ so $B$ is a non-scalar matrix since its trace is equal to zero. 
Then by the induction hypothesis the sum of a bounded number of conjugates of $A$ is a non-zero nilpotent matrix and we are done by Step 1. 

{\bf Case 2C:} $\det A \neq 0$. By using the rational canonical normal form we see that there exist  non-zero $a,b \in \mathbb{F}_p$ such that 
$A$ is conjugate to a block diagonal matrix and one of the blocks of $A$ (for notational ease, assume it's the first) is of the form: 
\begin{equation} \label{eq:rational.form}
\left( \begin{matrix}  0 & a &  &  & \\  & 0 & 1 &  &   \\  &  & \ddots & \ddots &  \\  &  &  & 0 & 1 \\  b & * & \cdots & * & * \end{matrix} \right).
\end{equation}
Denote $B:=\diag(-1,-1,1,\ldots,1)$. Then, $A+B^{-1}AB$ is a non-zero singular matrix and we are done by case 1C.
\end{proof}

\begin{remark} The proof of Lemma \ref{lem:ad.irr}  can be adapted to work over 
all finite fields of characteristic different than 2. For fields of characteristic 2, the argument of part 3B should be replaced. 
\end{remark}

\begin{lemma} \label{lem:width.adelic.fix.n} For any $n \geq 3$ there is $C$ such that \begin{enumerate} 
\item For any $p$, if $X \subseteq \SL_n(\mathbb{Z}_p)$ is symmetric and invariant to conjugation, then $X^C=\langle X \rangle$.
\item For any non-trivial word $w$, the width of $w$ in $\SL_n(\widehat{\mathbb{Z}})$ is less than $C$.
\end{enumerate} 
\end{lemma}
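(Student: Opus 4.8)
The plan is to prove part (1) first and then deduce part (2) from it together with Theorem \ref{thm:intro.congruence.subgroup}. For part (1), fix $n \ge 3$ and a prime $p$, and let $X \subseteq \SL_n(\Z_p)$ be symmetric and conjugation-invariant. The key dichotomy is whether the image of $X$ in $\SL_n(\F_p)$ is central or not. If $\langle X \rangle$ lands in the center of $\SL_n(\F_p)$ modulo $\SL_n(\Z_p;p)$, then $\langle X \rangle$ is contained in a bounded-index subgroup and we reduce to the congruence subgroup $\SL_n(\Z_p;p)$; if not, then $X$ contains an element $g$ whose reduction $\bar g$ is non-central. First I would take an element $g \in X$ with $\bar g$ non-central, consider the word map $(x_1, \dots, x_C) \mapsto \prod_i x_i^{-1} g x_i$, and apply Lemma \ref{lem:open.mapping}: its differential at the identity tuple is the map $(Y_1, \dots, Y_C) \mapsto \sum_i (\mathrm{Ad}(x_i^{-1}) - \id)$-type expression evaluated near $1$, which by Lemma \ref{lem:ad.irr} (applied to $A = \bar g - \bar g^{\mathrm{something}}$, or more directly to the adjoint action) surjects onto $\mathfrak{sl}_n(\F_p)$ once $C$ is at least the constant from that lemma. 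Hence $X^C$ (for $C$ absorbing both the \ref{lem:ad.irr} constant and the extra $+1$ from the open mapping theorem) contains a full coset of $\SL_n(\Z_p;p)$, and in fact, by conjugation-invariance and symmetry, contains $\SL_n(\Z_p;p)$ itself after a bounded number of further multiplications.

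The remaining issue in part (1) is to pass from "$X^{C}$ contains $\SL_n(\Z_p; p)$ or $X$ is almost-central" to "$X^{C'} = \langle X \rangle$" with $C'$ independent of $p$. In the non-central case, once $X^{C}$ contains $\SL_n(\Z_p;p)$, the quotient $\langle X \rangle / \SL_n(\Z_p;p)$ is a subgroup of the finite group $\SL_n(\F_p)$ generated by the image $\bar X$, which is a symmetric conjugation-invariant subset containing a non-central element; since the only proper conjugation-invariant-subgroup-generated possibilities are bounded (the group $\SL_n(\F_p)$ has bounded "conjugacy width" for non-central classes by Lemma \ref{lem:ad.irr} lifted through the exponential, or one can simply note that a symmetric normal generating set of a quotient of $\SL_n(\F_p)$ by its center has width bounded independently of $p$ — this is essentially Lemma \ref{lem:ad.irr} again). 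In the central case, $\bar X \subseteq Z(\SL_n(\F_p))$, so $X \subseteq \SL_n(\Z_p;p) \cdot Z$ where $Z$ is the (bounded-order) center, and one iterates the same argument one congruence level down; since $\SL_n(\Z_p;p^k)/\SL_n(\Z_p;p^{k+1}) \cong \mathfrak{sl}_n(\F_p)$ as an $\SL_n(\F_p)$-module for all $k \ge 1$, the same constant $C$ from Lemma \ref{lem:ad.irr} works at every level, and the $p$-adic topology forces termination: $\langle X \rangle$ is closed, so it equals $\SL_n(\Z_p; p^k) \cdot (\text{something bounded})$ for the appropriate $k$, and the width bound is uniform.

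For part (2), recall $w(\SL_n(\widehat \Z)) = \prod_p w(\SL_n(\Z_p))$ and that $X := w(\SL_n(\widehat\Z))$ is symmetric and conjugation-invariant. Applying part (1) prime-by-prime gives $X^C = \langle X \rangle$ in each $\SL_n(\Z_p)$ with $C$ uniform in $p$, hence $X^C = \langle X \rangle$ in $\SL_n(\widehat\Z)$; combining this with Theorem \ref{thm:intro.congruence.subgroup}, which already controls the width of $w$ on a congruence subgroup of $\SL_n(\Z)$ by a uniform constant, and with the fact that $\SL_n(\Z)$ is dense in $\SL_n(\widehat\Z)$ with the congruence subgroup mapping onto the corresponding open subgroup, yields a uniform width bound for $w$ in $\SL_n(\widehat\Z)$. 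I expect the main obstacle to be the bookkeeping in part (1): making sure the constant $C$ genuinely does not depend on $p$, which requires being careful that Lemma \ref{lem:ad.irr} is applied only for $p$ large (handling small $p$ separately and taking the max, exactly as in that lemma's own proof) and that the open mapping theorem's hypothesis on the differential is verified uniformly. The module-theoretic input — that every congruence quotient $\SL_n(\Z_p;p^k)/\SL_n(\Z_p;p^{k+1})$ is the adjoint module — is standard and lets a single constant propagate through all levels.
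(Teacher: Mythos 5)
Your overall architecture (induct on the first congruence level at which $X$ is non-central, use Lemma \ref{lem:ad.irr} for the adjoint quotients, use the open mapping theorem to stop, multiply over primes for part (2)) matches the paper's, but two essential steps are missing or wrong as written. First, in the case where $X$ has non-central reduction mod $p$, the differential of your map $(x_1,\dots,x_C)\mapsto \prod_i x_i^{-1}gx_i$ \emph{at the identity tuple} is not onto: its image is contained in the image of $\mathrm{Ad}(\bar g^{-1})-\mathrm{id}$, which is a proper subspace of $\mathfrak{sl}_n(\F_p)$ for most non-central $\bar g$ (e.g.\ a transvection), and Lemma \ref{lem:ad.irr} is a statement about sums of conjugates of a single Lie-algebra element, not about one fixed tuple of conjugators making a differential surjective. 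Relatedly, your claim that bounded width of a non-central conjugation-invariant subset of $\SL_n(\F_p)$ is ``essentially Lemma \ref{lem:ad.irr} again'' is not correct: this is the Liebeck--Shalev covering theorem \cite[Corollary 1.9]{LS01}, which is exactly the external input the paper uses in this case. The paper's route is: $\overline{X}^{C_1}=\SL_n(\F_p)$ by \cite{LS01}, pick $a,b\in X^{C_1}$ whose reductions generate $\SL_n(\F_p)$, and then Lemma \ref{lem:diff.cc} (which needs the \emph{generation} hypothesis, not mere non-centrality) gives surjectivity of $d\Phi_{a,b}$ so that Lemma \ref{lem:open.mapping} applies. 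Your outline has no substitute for this input.

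Second, in the case where $X$ is central modulo $p$ (level $k\geq 1$), your termination argument is circular: Lemma \ref{lem:ad.irr} applied level-by-level only shows $X^{C}\cdot \SL_n(\Z_p;p^{k+1})\supseteq \SL_n(\Z_p;p^{k})$ at each level, and iterating this forever produces infinite products, not words of bounded length; the appeal to ``$\langle X\rangle$ is closed'' assumes what the lemma is proving ($X^{C}=\langle X\rangle$ is precisely what forces closedness). The paper terminates by a second, quantitative application of Lemma \ref{lem:open.mapping} at level $k$: it chooses $a,b\in X^{C}\cap\SL_n(\Z_p;p^{k})$ whose images in $\SL_n(\Z_p;p^{k})/\SL_n(\Z_p;p^{k+1})\cong\mathfrak{sl}_n(\F_p)$ are a maximal nilpotent Jordan block and its transpose, checks $\val_p(\Phi_{a,b})\geq k$ and that $(X,Y)\mapsto[X,\bar a]+[Y,\bar b]$ is onto $\mathfrak{sl}_n(\F_p)$ (this uses $n\geq 3$ and the triviality of the common centralizer), and concludes that $X^{2C}$ contains a full coset of $\SL_n(\Z_p;p^{k+1})$, whence $X^{5C}\supseteq\SL_n(\Z_p;p^{k})$. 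You also need the small trick $g^{|Z|-1}h^{-1}gh\in X^{|Z|}$ to produce an element of $\SL_n(\Z_p;p^k)$ that is non-scalar mod $p^{k+1}$, since elements of $X$ are only central, not trivial, modulo $p^{k}$. Finally, for part (2) you do not need Theorem \ref{thm:intro.congruence.subgroup} or density of $\SL_n(\Z)$: the statement is about $\SL_n(\widehat{\Z})$ and follows from part (1) applied to $X=w(\SL_n(\Z_p))$ prime by prime, as in your first sentence of that paragraph.
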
 

\begin{proof} \begin{enumerate} \item Let $k=\min \left\{ i \mid \left( \exists g \in X \right) \text{ $g$ is not central modulo $p^{k+1}$} \right\}$. Clearly, $\langle X \rangle \subseteq Z(\SL_n(\mathbb{Z}_p)) \cdot \SL_n(\mathbb{Z}_p;p^{k})$. We will show that there is $C$ such that $\SL_n(\mathbb{Z}_p;p^{k}) \subseteq X^C$, and it will follow that $X^{C+|Z(\SL_n(\mathbb{Z}_p))|}=\langle X \rangle$, which proves the claim since $|Z(\SL_n(\mathbb{Z}_p))| \leq n$. \begin{itemize}
\item[Case 1:] $k=0$. Let $\overline{X} \subseteq \SL_n(\mathbb{F}_p)$ be the reduction of $X$ modulo $p$. By assumption, $\overline{X}$ is non-central, so \cite[Corollary 1.9]{LS01} implies that there is $C_1$, depending only on $n$, such that $\overline{X}^{C_1}=\SL_n(\mathbb{F}_p)$. Let $a,b\in X^{C_1}$ such that their reductions modulo $p$ generate $\SL_n(\mathbb{F}_p)$. By Lemma \ref{lem:diff.cc}, the differential of the map $\Phi_{\overline{a},\overline{b}}$ at $(1,1)$ is onto, which implies that $d \Phi_{a,b}(\mathfrak{sl}_n^2(\mathbb{Z}_p)) =\mathfrak{sl}_n(\mathbb{Z}_p)$, since $d \Phi_{a,b}$ is $\mathbb{Z}_p$-linear. By Lemma \ref{lem:open.mapping}, $ab\SL_n(\mathbb{Z}_p;p) \subseteq \Phi_{a,b}(\SL_n(\mathbb{Z}_p) \times \SL_n(\mathbb{Z}_p)) \subseteq X^{2C_1}$. It follows that $X^{3C_1}=\SL_n(\mathbb{Z}_p)$.
\item[Case 2:] $k>0$. Let $g\in X$ be such that $g$ is not a scalar modulo $p^{k+1}$. Since $g$ is a scalar modulo $p^{k}$,  there exists $h\in \SL_n(\mathbb{Z}_p)$ such that $g^{|Z(\SL_n(\mathbb{Z}_p))|-1}h ^{-1} g h\in X^{|Z(\SL_n(\mathbb{Z}_p))|}$  belongs to $\SL_n(\mathbb{Z}_p;p^{k})$ and is not a scalar modulo $p^{k+1}$. Since $\SL_n(\mathbb{Z}_p;p^{k})/\SL_n(\mathbb{Z}_p;p^{k+1})=\mathfrak{sl}_n(\mathbb{F}_p)$ as $\SL_n(\mathbb{Z}_p)$-modules, Lemma \ref{lem:ad.irr} implies that there is a constant $C$, independent of $X$, such that $X^C \cdot \SL_n(\mathbb{Z}_p;p^{k+1}) \supseteq \SL_n(\mathbb{Z}_p;p^{k})$. Let $\overline{a}$ be a maximal nilpotent Jordan block and let $\overline{b}=\overline{a}^T$. Note that the intersection of the centralizers of $\overline{a},\overline{b}$ in $\Mat_n$ is the collection of scalar matrices. Choose $a,b\in X^C \cap \SL_n(\mathbb{Z}_p;p^{k})$ whose images in $\SL_n(\mathbb{Z}_p;p^{k}) / \SL_n(\mathbb{Z}_p;p^{k+1})=\mathfrak{sl}_n(\mathbb{F}_p)$ are $\overline{a}$ and $\overline{b}$. We will show that $\Phi_{a,b}:\SL_n \times \SL_n \rightarrow \SL_n$ satisfies the conditions of Lemma \ref{lem:open.mapping}. 

Since $a-1$ is divisible by $p^k$, we have $\val_p(x \mapsto x ^{-1} a x-a) \geq k$. It follows that the derivative of this map also has $p$-valuation at least $k$. Similarly, $\Phi_{a,b}$ satisfies the first condition of Lemma \ref{lem:open.mapping}.

Note that $d \Phi_{a,b}(\mathfrak{sl}(\mathbb{Z}_p)^2) \subset p^k \mathfrak{sl}(\mathbb{Z}_p)$. In order to show the reverse containment, it is enough to show that the composition of $d \Phi_{a,b}$ and the reduction map $p^k \mathfrak{sl}_n(\mathbb{Z}_p) \rightarrow p^k \mathfrak{sl}_n(\mathbb{Z}_p) / p^{k+1} \mathfrak{sl}_n(\mathbb{Z}_p)$ is onto. This composition is the map $(X,Y) \mapsto [\overline{X},\overline{a}]+[\overline{Y},\overline{b}]$ (where $[x,y]$ is the Lie bracket), so we need to show that there is no non-zero linear functional that vanishes on all elements of the form $[\overline{X},\overline{a}]$ and $[\overline{X},\overline{b}]$, for $\overline{X}\in \mathfrak{sl}_n(\mathbb{F}_p)$. Any such functional has the form $\tr(A \cdot)$ for some $A\in\mathfrak{sl}_n(\mathbb{F}_p)$. 
Since $\tr(A[B,C])=\tr([A,B]C)$ for every three matrices $A$, $B$ and $C$, the assumption that $\tr(A[\overline{a},\overline{X}])=0$ for all $\overline{X}\in \mathfrak{sl}_n(\mathbb{F}_p)$ implies that $[A,\overline{a}]=\alpha I$, for some $\alpha$. Similarly, $[A,\overline{b}]=\beta I$, for some $\beta$. From $[A,\overline{a}]=\alpha I$ get (by induction) that $A_{i+1,i}=-i \alpha$, whereas from $[A,\overline{b}]=\beta I$ get that $A_{i+1,i}=A_{i+2,i+1}$. Since $n \geq 3$, we get $\alpha=0$. Similarly, $\beta=0$. Consequently, $A$ commutes with $\overline{a}$ and $\overline{b}$, so $A=0$, a contradiction. 

Applying Lemma \ref{lem:open.mapping} to $\Phi_{a,b}$, we get that any element in $ab\SL_n(\mathbb{Z}_p;p^{k+1})$ is in $\Phi_{a,b}(\SL_n(\mathbb{Z}_p) ^2)$, so, in particular, $ab\SL_n(\mathbb{Z}_p;p^{k+1}) \subset X^{2C}$ and $\SL_n(\mathbb{Z}_p;p^{k+1}) \subset X^{4C}$. Since $X^C \SL_n(\mathbb{Z}_p;p^{k+1}) \supseteq \SL_n(\mathbb{Z}_p;p^k)$, we get that $X^{5C} \supseteq \SL_n(\mathbb{Z}_p;p^k)$, proving the claim in this case.

\end{itemize} 
\item Since $w(\SL_n(\widehat{\mathbb{Z}}))=\prod w(\SL_n(\mathbb{Z}_p))$, the claim follows from the first claim.
\end{enumerate} 
\end{proof} 

\begin{proof}[Proof of Theorem \ref{thm:intro.fix.n}] By Theorem \ref{thm:intro.congruence.subgroup} and Lemma \ref{lem:width.adelic.fix.n}.
\end{proof}

\section{Proof of Theorems \ref{thm:intro.fix.w} and  \ref{thm:intro.congruence.subgroup}  - with explicit bounds}

The goal of this section is to prove the explicit bound of Theorem \ref{thm:intro.fix.w}. The proof follows the arguments in \cite{DV}.

\begin{lemma}\label{lemma:explicit.1} Let $q,m \in \N^+$ and denote $n:=3m$. Assume that $g_1,\ldots,g_m \in \SL_3(\Z;q)$ and $g_1\cdots g_m=e$. Then 
$g:=\diag(g_1,\ldots,g_m) \in L_n(\Z;q)\tilde{U}_n(\Z;q) U_n(\Z;q)$ where   $\tilde{U}_n(\Z;q):=\{hkh^{-1} \mid k \in U_n(\Z;q) \ \wedge \ h \in \SL_n(\Z)\}$.
\end{lemma}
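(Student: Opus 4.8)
The plan is to prove a relative (modulo $q$) version of Whitehead's lemma, following the matrix computations in \cite{DV}. The first step is the telescoping identity: writing $s_i:=g_1\cdots g_i$, so that $s_0=s_m=e$ and $s_{i-1}^{-1}s_i=g_i$ (and, crucially, $s_{m-1}^{-1}=g_m$ because $g_1\cdots g_m=e$), one checks block by block that
\[
\diag(g_1,\ldots,g_m)=M_1M_2\cdots M_{m-1},\qquad M_i:=\diag(I,\ldots,I,\,s_i,\,s_i^{-1},\,I,\ldots,I),
\]
where the two nonidentity $3\times 3$ blocks of $M_i$ occupy block-positions $i$ and $i+1$. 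Each $M_i$ is a two-block Whitehead element, and since $s_i$ is a product of matrices $\equiv I\pmod q$ we have $s_i\equiv I\pmod q$, so $s_i-I\in q\Mat_3(\Z)$. For such an $a=s_i$ the classical identity
$\diag(a,a^{-1})=\left(\begin{smallmatrix} I & a\\ 0 & I\end{smallmatrix}\right)\left(\begin{smallmatrix} I & 0\\ -a^{-1} & I\end{smallmatrix}\right)\left(\begin{smallmatrix} I & a\\ 0 & I\end{smallmatrix}\right)w$, with $w=\left(\begin{smallmatrix} 0 & -I\\ I & 0\end{smallmatrix}\right)$, holds; here the factors are block elementary matrices and block Weyl elements, all lying in $\SL_n(\Z)$.

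Now I would split each block elementary factor according to divisibility by $q$: $\left(\begin{smallmatrix} I & s_i\\ 0 & I\end{smallmatrix}\right)=\left(\begin{smallmatrix} I & I\\ 0 & I\end{smallmatrix}\right)\left(\begin{smallmatrix} I & s_i-I\\ 0 & I\end{smallmatrix}\right)$ with $\left(\begin{smallmatrix} I & s_i-I\\ 0 & I\end{smallmatrix}\right)\in U_n(\Z;q)$, and likewise $\left(\begin{smallmatrix} I & 0\\ -s_i^{-1} & I\end{smallmatrix}\right)=\left(\begin{smallmatrix} I & 0\\ -I & I\end{smallmatrix}\right)\left(\begin{smallmatrix} I & 0\\ I-s_i^{-1} & I\end{smallmatrix}\right)$ with the last factor in $L_n(\Z;q)$. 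The aim is then to use the commutator relations \eqref{eq:rel}, the facts that $U_n(\Z;q)$ is normalized by upper-unitriangular integer matrices and $L_n(\Z;q)$ by lower ones, and that $w$ conjugates the $e_{12}$-type transvections to the $e_{21}$-type ones, in order to sweep every $q$-divisible factor to the outside: all the $L_n(\Z;q)$-pieces to the far left, all the $U_n(\Z;q)$-pieces to the far right. Since $L_n(\Z;q)$ and $U_n(\Z;q)$ are groups and the corrections generated by \eqref{eq:rel} are products of $q$-divisible matrices (hence again lie in $L_n(\Z;q)$ or $U_n(\Z;q)$), these collapse to a single $\ell\in L_n(\Z;q)$ on the left and a single $u\in U_n(\Z;q)$ on the right. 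What remains strictly between $\ell$ and $u$ is a single $\SL_n(\Z)$-conjugate of one surviving $q$-divisible upper transvection — an element of $\tilde U_n(\Z;q)$ — which yields $\diag(g_1,\ldots,g_m)\in L_n(\Z;q)\,\tilde U_n(\Z;q)\,U_n(\Z;q)$. Heuristically, the middle $\tilde U$-term is unavoidable because, by Proposition \ref{prop:Tits}, a product of one $L_n(\Z;q)$- and one $U_n(\Z;q)$-factor cannot reach the Cartan part of $\diag(g_1,\ldots,g_m)$ modulo $q$, whereas a conjugate of a unipotent matrix $\equiv I\pmod q$ supplies exactly such a (nilpotent, but not strictly triangular) correction.

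The hard part is precisely this bookkeeping: ensuring that after all the commuting the leftover middle factor is a \emph{single} $\SL_n(\Z)$-conjugate of a \emph{single} element of $U_n(\Z;q)$, not a longer product — note that $L_n(\Z;q)\,\tilde U_n(\Z;q)\,U_n(\Z;q)$ is not closed under multiplication, so a naive induction on $m$ that multiplies $m-1$ copies of the two-block case does not work. The point that makes it go through, and the part that follows \cite{DV} most closely, is that consecutive $M_i$ overlap in exactly one block and the block Weyl elements arising from the $M_i$ multiply to a single block permutation in $\SL_n(\Z)$, so that all the ``non-$q$'' data is absorbed into one conjugating matrix. An essentially equivalent route is to reduce the block matrix $\diag(g_1,\ldots,g_m)$ directly: first perform column operations with entries in $q\Z$ (producing the right-hand $U_n(\Z;q)$-factor), then conjugate by an explicit element of $\SL_n(\Z)$ built from the partial products $s_i$, then perform row operations with entries in $q\Z$ (producing the left-hand $L_n(\Z;q)$-factor); the identity $g_1\cdots g_m=e$ is exactly what guarantees that this reduction terminates at the identity.
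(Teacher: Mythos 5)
Your opening reduction is fine: the telescoping identity $\diag(g_1,\ldots,g_m)=M_1\cdots M_{m-1}$ with $M_i=\diag(I,\ldots,I,s_i,s_i^{-1},I,\ldots,I)$ is correct, and so is the Whitehead factorization of each $M_i$. But the heart of your argument --- the ``sweeping'' step that is supposed to collapse everything to a single element of $L_n(\Z;q)\,\tilde U_n(\Z;q)\,U_n(\Z;q)$ --- is asserted, not proved, and the justification you offer does not hold as stated. First, the claim that the corrections produced by moving $q$-divisible factors around ``again lie in $L_n(\Z;q)$ or $U_n(\Z;q)$'' is false in general: conjugating an element of $U_n(\Z;q)$ by a block Weyl element $w=\left(\begin{smallmatrix}0&-I\\ I&0\end{smallmatrix}\right)$ (or by the block permutation you hope the $w$'s multiply into) destroys triangularity, so such a correction only lands in $\tilde U_n(\Z;q)$. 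Since, as you yourself note, $L_n(\Z;q)\,\tilde U_n(\Z;q)\,U_n(\Z;q)$ is not closed under multiplication, this leaves you with many interleaved $\tilde U$-factors and no mechanism to merge them. Second, the non-$q$ pieces ($\left(\begin{smallmatrix}I&I\\0&I\end{smallmatrix}\right)$, $\left(\begin{smallmatrix}I&0\\-I&I\end{smallmatrix}\right)$, and the $w$'s) are interleaved with the $q$-divisible pieces, so ``the block Weyl elements multiply to a single block permutation'' is not something you can invoke without actually commuting everything past them --- which is exactly the unresolved bookkeeping. You explicitly flag this as ``the hard part,'' and it is precisely the part that is missing; as written, the proof does not establish that the middle is a \emph{single} conjugate of a \emph{single} element of $U_n(\Z;q)$.

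For comparison, the paper avoids all of this with one explicit identity, much closer to the alternative route you sketch in your last paragraph but never carry out. Working in $\M_m(\M_3(\Z))$, let $l_1$ be block lower unitriangular with $g_i^{-1}$ in block position $(i+1,i)$, let $l_2$ be block lower unitriangular with $I_3$ in positions $(i+1,i)$, let $u_1$ be block upper unitriangular with $1-s_i$ in positions $(i,i+1)$, and let $u_2$ be block upper unitriangular with $(1-s_i)g_{i+1}$ in positions $(i,i+1)$. Then one checks directly that $g=l_1^{-1}u_1^{-1}l_2u_2=(l_1^{-1}l_2)(l_2^{-1}u_1^{-1}l_2)u_2$, where $l_1^{-1}l_2\in L_n(\Z;q)$ (because $l_1\equiv l_2 \pmod q$ and both are block lower unitriangular), $u_1,u_2\in U_n(\Z;q)$ (because $s_i\equiv I\pmod q$), and the single conjugator is just $l_2\in \SL_n(\Z)$. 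No Weyl elements and no sweeping are needed. To repair your write-up, replace the Whitehead-plus-sweeping plan by such an explicit factorization (or supply the full bookkeeping showing the collapse, which will essentially reproduce it).
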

\begin{proof} Let $I_3$ be the identity matrix of $\SL_3(\Z)$ and identify $M_n(\Z)$ with $M_m(M_3(\Z))$ where 
$M_k(R)$ is the ring of $k \times k $ matrices over the ring $R$. 
Let $l_1$ be the matrix of $M_m(M_3(\Z))$ with $I_3$
on the diagonal, $g_i^{-1}$ on the $(i+1,i)$ entry for every $1 \le i \le m-1$, and zero elsewhere. 
Let $l_2$ be the matrix of $M_m(M_3(\Z))$ with $I_3$
on the diagonal, $I_3$ on the $(i+1,i)$ entry for every $1 \le i \le m-1$, and zero elsewhere. 
Let $u_1$ be the matrix of $M_m(M_3(\Z))$ with $I_3$
on the diagonal, $1-g_1\cdots g_i$ on the $(i,i+1)$ entry for every $1 \le i \le m-1$, and zero elsewhere. 
Let $u_2$ be the matrix of $M_m(M_3(\Z))$ with $I_3$
on the diagonal, $(1-g_1 \cdots g_i)g_{i+1}$ on the $(i,i+1)$ entry for every $1 \le i \le m-1$, and zero elsewhere. 
Then, $g=l_1^{-1}u_1^{-1}l_2u_2=(l_1^{-1}l_2)(l_2^{-1}u_1^{-1}l_2)u_2 \in L_n(\Z;q)\tilde{U}_n(\Z;q) U_n(\Z;q)$.
\end{proof}

\begin{lemma}\label{lemma:explicit.2} Let $q,m \in \N^+$ and denote $n:=3m$. Assume that $g_1,\ldots,g_m \in U_3(\Z;q)L_3(\Z;q)$. Denote
$g:=\diag(g_1 \cdots g_m,I_3,\ldots,I_3) \in \SL_n(\Z;q)$ and $\tilde{U}_n(\Z;q):=\{hkh^{-1} \mid k \in U_n(\Z;q) \ \wedge \ h \in \SL_n(\Z)\}$.
Then $g \in L_n(\Z;q)\tilde{U}_n(\Z;q) U_n(\Z;q)L_n(\Z;q)$.
\end{lemma}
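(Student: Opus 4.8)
The plan is to reduce Lemma \ref{lemma:explicit.2} to Lemma \ref{lemma:explicit.1} by rewriting $\diag(g_1\cdots g_m, I_3,\ldots,I_3)$ as a product of block-diagonal matrices whose $3\times 3$ blocks multiply to the identity across each of the $m$ diagonal slots. First I would use the hypothesis $g_i \in U_3(\Z;q)L_3(\Z;q)$ to write $g_i = u_i \ell_i$ with $u_i \in U_3(\Z;q)$ and $\ell_i \in L_3(\Z;q)$, and then observe that $g_1\cdots g_m = (u_1\ell_1)(u_2\ell_2)\cdots(u_m\ell_m)$. The key trick is a "telescoping" factorization: set $h_i$ so that the partial products telescope; concretely, I want to exhibit tuples $(a_1,\dots,a_m)$ and $(b_1,\dots,b_m)$ of matrices in $\SL_3(\Z;q)$, each tuple with product $e$, so that $\diag(g_1\cdots g_m,I_3,\dots,I_3)$ is a product of $\diag(a_1,\dots,a_m)$, a conjugate of a diagonal upper-unipotent matrix, and $\diag(b_1,\dots,b_m)$ — and then apply Lemma \ref{lemma:explicit.1} to each of $\diag(a_1,\dots,a_m)$ and $\diag(b_1,\dots,b_m)$.

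In more detail, I would first handle the model case where $m=1$: then the statement is $g_1 \in L_3(\Z;q)\tilde U_3(\Z;q) U_3(\Z;q)L_3(\Z;q)$, which is immediate from $g_1 = u_1\ell_1 \in U_3(\Z;q)L_3(\Z;q)$ together with the (trivial) observation that $U_3(\Z;q) \subseteq \tilde U_n(\Z;q)$ and $1 \in L_3(\Z;q)$. For general $m$, the natural move is: the matrix $\diag(g_1\cdots g_m, I_3,\dots,I_3)$ differs from $\diag(g_1,\dots,g_m)$ by a block matrix which is a product of elementary block operations moving the $g_i$ factors into the first slot. Following Dennis–Vaserstein \cite{DV}, one writes a commutator-type identity: choosing the lower-triangular block matrix $l$ with $I_3$ on the diagonal and suitable entries below, and the upper-triangular block matrix $u$ with $I_3$ on the diagonal and suitable entries above (exactly as in the proof of Lemma \ref{lemma:explicit.1}), one gets $\diag(g_1\cdots g_m,I_3,\dots,I_3) = l^{-1} u^{-1} l' u'$ where $l,l' \in L_n(\Z;q)$ and $u,u'\in U_n(\Z;q)$ after a conjugation — but since the relevant blocks involve the $g_i$'s, which only lie in $U_3(\Z;q)L_3(\Z;q)$ rather than being unipotent, one instead absorbs the non-unipotent part into a $\tilde U_n(\Z;q)$ factor and an extra $L_n(\Z;q)$ factor, which is precisely why the conclusion has the shape $L_n\tilde U_n U_n L_n$ rather than $L_n\tilde U_n U_n$.

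Concretely, I expect the cleanest route is: apply Lemma \ref{lemma:explicit.1} to the tuple $(\ell_1 u_2, \ell_2 u_3, \dots, \ell_{m-1}u_m, \ell_m u_1)$ — which has product conjugate to $g_1\cdots g_m$ up to the outer $u_1,\ell_m$ factors — noting that each $\ell_i u_{i+1} \in L_3(\Z;q)U_3(\Z;q)$; however Lemma \ref{lemma:explicit.1} requires the entries to be in $\SL_3(\Z;q)$ and the product to be $e$, so one must instead rescale: replace $g_m$ by $g_m (g_1\cdots g_m)^{-1}$ conjugated appropriately, or equivalently insert the correcting factor $(g_1\cdots g_m)^{-1}$ in the last slot and carry the discrepancy $\diag(g_1\cdots g_m, I_3,\dots,I_3)$ out front. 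After this bookkeeping, $\diag(g_1,\dots,g_{m-1}, g_m(g_1\cdots g_m)^{-1} \cdot (g_1\cdots g_m)) = \diag(g_1,\dots,g_m)$ and the genuinely $e$-producting tuple is $(g_1,\dots,g_{m-1}, g_m (g_1\cdots g_m)^{-1})$; applying Lemma \ref{lemma:explicit.1} to $(u_1,\ell_1 u_2, \dots)$-type regrouping gives the $L_n \tilde U_n U_n$ part, and the leftover $\diag(g_1\cdots g_m, I_3,\dots,I_3)$-to-$\diag(g_1,\dots,g_m)$ discrepancy contributes the final $L_n$ (or gets absorbed).

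The main obstacle I anticipate is the careful tracking of which conjugating matrices lie in $\SL_n(\Z)$ versus preserving the congruence condition $q$, and ensuring the $\tilde U_n(\Z;q)$ factor genuinely has the form $hku^{-1}$ with $k\in U_n(\Z;q)$ and $h\in \SL_n(\Z)$ — the congruence level $q$ must sit on $k$, not on $h$, so one must be vigilant that the "spreading out" elementary matrices $l_1, l_2, u_1, u_2$ appearing as in Lemma \ref{lemma:explicit.1} have their off-diagonal blocks divisible by $q$ (which they are, since the $g_i \equiv I_3 \bmod q$ forces $1 - g_1\cdots g_i$ and $g_i^{-1} - I_3$ to be divisible by $q$). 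Once this is checked the decomposition $g = l_1^{-1}u_1^{-1}l_2 u_2$ and its regrouping $(l_1^{-1}l_2)(l_2^{-1}u_1^{-1}l_2)u_2$ lands in $L_n(\Z;q)\tilde U_n(\Z;q)U_n(\Z;q)$, and the extra $L_n(\Z;q)$ on the right accommodates the fact that we started from a $U_3L_3$ product rather than from elements of $\SL_3(\Z;q)$ with trivial product.
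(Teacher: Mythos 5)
Your high-level plan --- reduce to Lemma \ref{lemma:explicit.1} by splitting off a block-diagonal matrix built from the $g_i$, with the trailing $L_n(\Z;q)$ absorbing the fact that the $g_i$ are only in $U_3(\Z;q)L_3(\Z;q)$ --- is the same as the paper's, but the concrete reduction you propose does not go through, and the step you wave off (``the discrepancy contributes the final $L_n$ (or gets absorbed)'') is exactly the point that needs proof and fails for your choices. With your tuple $(g_1,\ldots,g_{m-1},g_m(g_1\cdots g_m)^{-1})$, Lemma \ref{lemma:explicit.1} gives $D:=\diag(g_1,\ldots,g_{m-1},g_m(g_1\cdots g_m)^{-1})\in L_n\tilde{U}_nU_n$, and the ``discrepancy'' is $D^{-1}\diag(g_1,\ldots,g_m)=\diag(I_3,\ldots,I_3,\,g_1\cdots g_m)$: it sits in the wrong slot, is certainly not an element of $L_n(\Z;q)$, and cannot be moved past the other factors since $L_nU_n\neq U_nL_n$. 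The same problem occurs if you instead use the $e$-product tuple $((g_1\cdots g_m)^{-1}g_1,g_2,\ldots,g_m)$: that only yields $g\in \diag(g_1,\ldots,g_m)\,(L_n\tilde{U}_nU_n)^{-1}\subseteq U_nL_nU_n\tilde{U}_nL_n$, which is not the required shape. Your cyclic tuple $(\ell_1u_2,\ldots,\ell_mu_1)$ has product $u_1^{-1}(g_1\cdots g_m)u_1\neq e$, so Lemma \ref{lemma:explicit.1} does not apply to it either; you notice this but the ``rescaling'' you sketch never resolves which side and which slot the correction lands in.

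The missing idea is the order reversal. Set $h:=\diag(g_m,g_{m-1},\ldots,g_1)$; writing $g_i=u_i\ell_i$ gives $h=\diag(u_m,\ldots,u_1)\diag(\ell_m,\ldots,\ell_1)\in U_n(\Z;q)L_n(\Z;q)$, and
\[
gh^{-1}=\diag\bigl((g_1\cdots g_m)g_m^{-1},\,g_{m-1}^{-1},\,\ldots,\,g_1^{-1}\bigr),
\]
whose blocks lie in $\SL_3(\Z;q)$ and whose ordered product telescopes to $e$. Lemma \ref{lemma:explicit.1} then gives $gh^{-1}\in L_n(\Z;q)\tilde{U}_n(\Z;q)U_n(\Z;q)$, hence $g\in L_n(\Z;q)\tilde{U}_n(\Z;q)U_n(\Z;q)\cdot U_n(\Z;q)L_n(\Z;q)=L_n(\Z;q)\tilde{U}_n(\Z;q)U_n(\Z;q)L_n(\Z;q)$, because the correction $h$ is multiplied on the right and its $U_n$-part merges with the existing $U_n$ factor. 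This is the paper's (one-line) proof; without the reversed ordering, none of the tuples you name produce a correction lying in $U_n(\Z;q)L_n(\Z;q)$ on the correct side, so your argument as written has a genuine gap.
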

\begin{proof} Define $h:=\diag(g_m,g_{m-1},\ldots, g_1)\in U_n(\Z;q)L_n(\Z;q)$. Lemma \ref{lemma:explicit.1} implies that 
$gh^{-1} \in L_n(\Z;q)\tilde{U}_n(\Z;q) U_n(\Z;q)$. Thus, $$g \in L_n(\Z;q)\tilde{U}_n(\Z;q) U_n(\Z;q)h \subseteq L_n(\Z;q)\tilde{U}_n(\Z;q) U_n(\Z;q) L_n(\Z;q).$$
\end{proof}

\begin{lemma}\label{lemma:explicit.3}
 Let $n \ge m \ge  3$ and $q \ge 1$.  Denote ${E}^*(m,\Z;q):=\{\diag(1,\ldots,1,g) \in \SL_n(\Z) \mid g \in {E}(m,\Z;q)\}$.   Then, ${E}(n,\Z;q) = L_n(\Z;q)U_n(\Z;q)L_n(\Z;q){E}^*(m,\Z;q)U_n(\Z;q)$.
\end{lemma}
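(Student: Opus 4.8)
The statement is an identity of subsets of $\SL_n(\Z)$ of the shape
\[
E(n,\Z;q) = L_n(\Z;q)U_n(\Z;q)L_n(\Z;q)E^*(m,\Z;q)U_n(\Z;q).
\]
The inclusion $\supseteq$ is immediate: each factor on the right lies in $E(n,\Z;q)$ (for $E^*(m,\Z;q)$ this uses that elementary matrices supported on the last $m$ coordinates with entries in $q\Z$ are among the standard generators of $E(n,\Z;q)$), and $E(n,\Z;q)$ is a group. So the content is the inclusion $\subseteq$, i.e.\ a bounded-length factorization of an \emph{arbitrary} element of $E(n,\Z;q)$ into the prescribed five blocks. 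I would obtain this by the usual ``kill the off-diagonal entries, bottom-right corner last'' column/row reduction, exactly in the spirit of Dennis--Vaserstein \cite{DV}, keeping careful track of which Bruhat-type block absorbs each step.

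\textbf{Key steps.} First, I would reduce to the case $n=3m$ is not needed here, but I would use the block structure with $m$ distinguished coordinates: write $\SL_n = \SL_{n-m} \times \SL_m$-style blocks, or more precisely think of an element $x\in E(n,\Z;q)$ as an $n\times n$ matrix and aim to clear everything outside the bottom-right $m\times m$ block. Concretely: (i) using left multiplication by elements of $L_n(\Z;q)$ (lower unipotent, entries in $q\Z$) and right multiplication by elements of $U_n(\Z;q)$, perform elementary row/column operations to bring $x$ to a form where the first $n-m$ rows and columns agree with the identity, except that we may be forced to ``park'' some correction in an intermediate $U_n L_n$ sandwich — this is where the middle $U_n(\Z;q)L_n(\Z;q)$ factor comes from, and the earlier Lemmas \ref{lemma:explicit.1} and \ref{lemma:explicit.2} are the prototype for how a diagonal block element decomposes as $L_n \tilde U_n U_n (L_n)$. (ii) Invoke Lemma \ref{lemma:explicit.2} (and implicitly Lemma \ref{lemma:explicit.1}) to handle the block-diagonal remainder: these show precisely that $\diag(g_1\cdots g_m, I,\dots,I)$ with each $g_i \in U_3(\Z;q)L_3(\Z;q)$ lies in $L_n\tilde U_n U_n L_n$, and more generally that the obstruction to straightening a product can be absorbed into lower/upper unipotent pieces with entries in $q\Z$. (iii) Once the first $n-m$ coordinates are cleared, whatever remains is an element of $E(m,\Z;q)$ sitting in the bottom-right block, i.e.\ an element of $E^*(m,\Z;q)$; collecting all the accumulated $L_n$ and $U_n$ factors from steps (i)--(ii) and commuting them past each other using the Steinberg relations \eqref{eq:rel} to merge same-type factors, one lands in the asserted normal form $L_n U_n L_n \cdot E^*(m,\Z;q) \cdot U_n$. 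The algebraic engine throughout is that $E(n,\Z;q)$ is generated by the $e_{i,j}(a)$, $a\in q\Z$, together with Proposition \ref{prop:Tits} / the commutator identities in \eqref{eq:rel}, which let one move a bounded number of elementary factors of one type through factors of another type at the cost of absorbing $U_n$- or $L_n$-entries still in $q\Z$.

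\textbf{Main obstacle.} The delicate point is bookkeeping the \emph{order} of the factors: naively the reduction produces an alternating word $L_n U_n L_n U_n \cdots E^*(m)\cdots$ of length depending on $n$, and one must collapse it to the fixed pattern $L_n U_n L_n \cdot E^*(m,\Z;q)\cdot U_n$. Collapsing requires (a) that two adjacent factors of the same type multiply within that type — true since $U_n(\Z;q)$ and $L_n(\Z;q)$ are groups — and (b) that one can commute an $E^*(m,\Z;q)$ factor past an adjacent $U_n$ or $L_n$ factor, or past the ``parked'' middle term, without creating new blocks outside the pattern; this is where one uses that $E^*(m,\Z;q)$ normalizes nothing in general, so instead one front-loads all straightening \emph{before} isolating the bottom-right block, so that only a single pass is needed. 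Getting the three-term head $L_nU_nL_n$ (rather than two or four) to exactly match what Lemma \ref{lemma:explicit.2} outputs — recalling $\tilde U_n(\Z;q)$ there is a conjugate of $U_n(\Z;q)$ which itself unwinds into an $L_n U_n$ or $U_n L_n$ product via Proposition \ref{prop:Tits} — is the one calculation that must be done with care; everything else is the standard Gaussian-elimination-with-bounded-length argument of \cite{DV}.
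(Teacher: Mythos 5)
Your easy inclusion $\supseteq$ and the general ``Dennis--Vaserstein elimination'' flavour are fine, but the proposal skips precisely the two arithmetic inputs that make such an elimination possible when every row/column operation must use a multiple lying in $q\Z$, and these are the heart of the paper's proof. The paper argues by induction on $n$, peeling off one row and column at a time: for $g\in E(n+1,\Z;q)$ one first uses the stable range property of $\Z$ (applied to the tuple $(qg_{1,1},g_{2,1},\dots)$) to add $q\Z$-multiples of the first row so that the entries of the first column below the corner have $\gcd$ exactly $q$; one then uses Mennicke's theorem, $E(k,\Z;q)=\{g\in\SL_k(\Z;q)\mid g_{i,i}\equiv 1 \ (\mathrm{mod}\ q^2)\}$, to know that the corner entry is $1$ modulo $q^2$, so that adding $q\Z$-multiples of the other rows (which changes the corner only by multiples of $q^2$) makes it exactly $1$; only then can the first row and column be cleared, and Mennicke is used again to see that the remaining $n\times n$ block is in $E(n,\Z;q)$ so the induction continues. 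In your sketch, ``perform elementary row/column operations to bring $x$ to a form where the first $n-m$ rows and columns agree with the identity'' is exactly the nontrivial claim: the pivot $g_{1,1}$ is not a unit and you may only add $q$-divisible multiples, so direct Gaussian elimination fails (e.g.\ you cannot kill $g_{2,1}=q$ against $g_{1,1}=1+q^2$ by a single $L_n(\Z;q)$-operation). Neither the stable-range step nor the $q^2$-congruence appears in your plan.

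The second gap is the bounded-pattern bookkeeping, which you yourself flag but do not resolve. You concede that a naive reduction produces an alternating word $L_nU_nL_nU_n\cdots$ of length growing with $n$ and propose to collapse it by merging same-type neighbours and ``front-loading''; but adjacent $U_n$- and $L_n$-factors do not commute, and there is no general way to collapse $(L_nU_n)^k$ into $L_nU_nL_n\cdot U_n$. The paper never creates such a word: at each inductive step the only new factors are first-column and first-row elementary matrices, i.e.\ elements of the abelian groups $C^-_n(\Z;q)=\la e_{j,1}(qa)\ra$ and $R^-_n(\Z;q)=\la e_{1,j}(qa)\ra$, and these are normalized by the embedded copies $U^-_n(\Z;q)$, $L^-_n(\Z;q)$ acting on the lower block; this is what lets them be slid to the ends and absorbed into $L_{n+1}(\Z;q)$ and $U_{n+1}(\Z;q)$ without increasing the number of blocks. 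Finally, invoking Lemmas \ref{lemma:explicit.1} and \ref{lemma:explicit.2} inside this proof is misplaced: they produce $\tilde{U}_n(\Z;q)$-factors (conjugates of $U_n(\Z;q)$), which do not occur in the normal form asserted here; in the paper those lemmas are only combined with the present lemma afterwards, in Corollary \ref{cor:explicit}, to handle the $E^*$-block.
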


\begin{proof} 
Let $q \ge 1$. The proof is by induction on $n$. The base case $n=m$ is clear. Assume that the statement  is true for some $n \ge m$. We have to show that 
the statement is true also for $n+1$.  Let $U_n^-(\Z;q)$ and $L_n^-(\Z;q)$ be the images in $\SL_{n+1}(\Z)$ of $U_n(\Z;q)$ and $L_n(\Z;q)$ under the map $M \mapsto \diag(1,M)$. Denote $C^-_n(q):=\la e_{j,1}(q) \mid 2 \le j \le n+1\ra$ and 
$R^-_n(q):=\la e_{1,j}(q) \mid 2 \le j \le n+1\ra$. Finally recall that the main theorem of \cite{Men} implies that for every $k \ge 3$,
$$E(k,\Z;q)=\{g\in \SL_k(\Z;q) \mid \forall 1 \le i \le k. \ g_{i,i}=1 (\textrm{mod}\ q^2)  \}.$$
Let $g \in{E}(n+1,\Z;q)$. Then,  $\gcd(g_{1,1},g_{2,1},\ldots,g_{n,1})=1$ and $\gcd(qg_{1,1},g_{2,1},\ldots,g_{n,1})=q$. 
Recall that $\Z$ satisfies 
the following stable range condition: If $m\ge 3$ and $a_1,\ldots,a_m \in \Z$  then there exists $t_2,\ldots,t_m \in \Z$ such that 
$\gcd(a_1,\ldots,a_m)=\gcd(a_2-t_2a_1,\ldots,a_n-t_na_1)$. Thus, there exists $h\in C^-_n(\Z;q)g$ such that 
$\gcd(h_{2,1},\ldots,h_{n,1})=q$. Since $h \in {E}(n,\Z;q)$, we have $h_{1,1}=1$ modulo $q^2$ so there exists $h'\in R^-_n(\Z;q)h$ such that $h'_{1,1}=1$. Finally, there is $h''\in C^-_n(q)h'R^-_n(q)$ such that $h''=\diag(1,g')$ for some $g' \in \SL_n(\Z;q)$.  Note that $g' \in {E}(n,\Z;q)$ since its diagonal entries are equal to 1 modulo $q^2$. 
Thus, the induction hypothesis implies that $h'' \in L_n^-(\Z;q)U_n^-(\Z;q)L_n^-(\Z;q)E^*(m,\Z;q)U_n^-(\Z;q)$. 
It follows that $g$ belongs to $$C^-_n(\mathbb{Z};q)R^-_n(\mathbb{Z};q)C^-_n(\mathbb{Z};q)L_n^-(\Z;q)U_n^-(\Z;q)L_n^-(\Z;q)E^*(m,\Z;q)U_n^-(\Z;q)R^-_n(\mathbb{Z};q).$$
Since both $U_n^-(\Z;q)$ and $L_n^-(\Z;q)$ normalize $C^-_n(\mathbb{Z};q)$ and $R^-_n(\mathbb{Z};q)$, we have

\[
C^-_n(\mathbb{Z};q)R^-_n(\mathbb{Z};q)C^-_n(\mathbb{Z};q)L_n^-(\Z;q)U_n^-(\Z;q)L_n^-(\Z;q)E^*(m,\Z;q)U_n^-(\Z;q)R^-_n(\mathbb{Z};q)=
\]
\[
C^-_n(\mathbb{Z};q)L_n^-(\Z;q)R^-_n(\mathbb{Z};q)U_n^-(\Z;q)C^-_n(\mathbb{Z};q)L_n^-(\Z;q)E^*(m,\Z;q)U_n^-(\Z;q)R^-_n(\mathbb{Z};q) =
\]
\[
L_{n+1}(\Z;q)U_{n+1}(\Z;q)L_{n+1}(\Z;q)E^*(m,\Z;q)U_{n+1}(\Z;q)
\]
\end{proof}

\begin{corollary}\label{cor:explicit} For every $n \ge 3$ denote $\tilde{U}_n(\Z;q):=\{hkh^{-1} \mid k \in U_n(\Z;q) \ \wedge \ h \in \SL_n(\Z)\}=\{hkh^{-1} \mid k \in L_n(\Z;q) \ \wedge \ h \in \SL_n(\Z)\}$.
There exists an integer $N$ such that for every $n \ge N$ and every $q \in \Z$ the following holds:
$$
E(n,\Z;q) \subseteq    L_n(\Z;q)(\tilde{U}_n(\Z;q))^3 U_n(\Z;q).
$$
\end{corollary}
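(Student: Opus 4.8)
The plan is to bootstrap the inclusion from the three preceding lemmas together with the bounded generation of Section~\ref{sec:ntro.bounded.generation}. First fix a constant $C$ such that $E(m,\Z;q)=(U_m(\Z;q)L_m(\Z;q))^C$ for every $q\in\N^+$ and every $m\in\{3,4,5\}$; this is legitimate because $U_m(\Z;q)$ and $L_m(\Z;q)$ are generated by the elementary matrices $e_{i,j}(x)$, $x\in q\Z$, so $\la U_m(\Z;q)\cup L_m(\Z;q)\ra=E(m,\Z;q)$, and Proposition~\ref{claim:bdd.gen.3} makes this subgroup equal to $(U_m(\Z;q)L_m(\Z;q))^{D(m)}$. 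Put $N:=3(C+1)$; for notational convenience I treat the case $3\mid n$ (the cases $n\equiv 1,2\pmod 3$ are identical after replacing one $\SL_3$-block by an $\SL_4$- or $\SL_5$-block). For $n\ge N$, Lemma~\ref{lemma:explicit.3} with $m=3$ gives
\[
E(n,\Z;q)=L_n(\Z;q)\,U_n(\Z;q)\,L_n(\Z;q)\,E^*(3,\Z;q)\,U_n(\Z;q),
\]
so it suffices to establish the single corner inclusion
\[
E^*(3,\Z;q)=\{\diag(I_{n-3},g)\mid g\in E(3,\Z;q)\}\ \subseteq\ L_n(\Z;q)\,\tilde{U}_n(\Z;q)\,U_n(\Z;q).
\]
Indeed, substituting this into the displayed identity and using $L_n(\Z;q)L_n(\Z;q)=L_n(\Z;q)$, $U_n(\Z;q)U_n(\Z;q)=U_n(\Z;q)$ and $U_n(\Z;q),L_n(\Z;q)\subseteq\tilde{U}_n(\Z;q)$ yields
\[
E(n,\Z;q)\subseteq L_n(\Z;q)\,U_n(\Z;q)\,L_n(\Z;q)\,\tilde{U}_n(\Z;q)\,U_n(\Z;q)\subseteq L_n(\Z;q)\,(\tilde{U}_n(\Z;q))^3\,U_n(\Z;q).
\]

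For the corner inclusion I would follow the proof of Lemma~\ref{lemma:explicit.2}, but reworked so that the three-factor Lemma~\ref{lemma:explicit.1} is used in place of the four-factor Lemma~\ref{lemma:explicit.2}. Given $g\in E(3,\Z;q)$, write $g=g_1\cdots g_C$ with $g_i\in U_3(\Z;q)L_3(\Z;q)$ and factor $g_i=u_il_i$ with $u_i\in U_3(\Z;q)$, $l_i\in L_3(\Z;q)$. Partitioning $\{1,\dots,n\}$ into $n/3$ consecutive size-$3$ blocks (here $n\ge 3(C+1)$ is used), consider the block-diagonal matrix whose diagonal blocks are, in order, $g^{-1},g_1,g_2,\dots,g_C$ followed by copies of $I_3$; the product of its blocks is $g^{-1}g_1\cdots g_C=e$, so Lemma~\ref{lemma:explicit.1} places it in $L_n(\Z;q)\tilde{U}_n(\Z;q)U_n(\Z;q)$. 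Its discrepancy from $\diag(I_{n-3},g)$ is a product of one unitriangular congruence matrix of each sign, assembled from the $\diag(u_i)$ and $\diag(l_i)$ exactly as the completion $h$ is assembled in Lemma~\ref{lemma:explicit.2}; the point is that, since the matrix was made product-trivial by appending the \emph{single} block $g^{-1}$ rather than by Lemma~\ref{lemma:explicit.2}'s $m$-block completion, only one factor of each sign is needed, and the lower one then merges with the $L_n(\Z;q)$ and the upper one with the $U_n(\Z;q)$ already produced by Lemma~\ref{lemma:explicit.1}. The routine auxiliary facts here are: a block-diagonal matrix whose blocks are unitriangular of one fixed type is itself of that type; and signed permutation matrices lie in $\SL_n(\Z)$, interchange $U_n(\Z;q)$ with $L_n(\Z;q)$ and preserve $\tilde{U}_n(\Z;q)$ and $E(n,\Z;q)$ under conjugation, which lets one normalize the position of the distinguished block.

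The main obstacle is precisely this tightening of the factor count. A naive argument — Lemma~\ref{lemma:explicit.3} combined with a black-box invocation of Lemma~\ref{lemma:explicit.2}, using signed permutations to move the corner and to swap the two types of triangular group — only gives a bound of the shape $E(n,\Z;q)\subseteq L_n(\Z;q)(\tilde{U}_n(\Z;q))^5U_n(\Z;q)$, because the unitriangular corrections coming from decomposing the corner element get \emph{stacked on top of} the three-factor framing that Lemma~\ref{lemma:explicit.3} already supplies rather than absorbed into it. Removing the two superfluous factors is what forces one back to the explicit Dennis--Vaserstein block identities underlying Lemmas~\ref{lemma:explicit.1} and~\ref{lemma:explicit.2}. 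Once this is done, the reduction from arbitrary $n\ge N$ to the case $3\mid n$ and the bookkeeping with signed permutation matrices are straightforward.
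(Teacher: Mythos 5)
Your reduction to the single corner inclusion $E^*(3,\Z;q)\subseteq L_n(\Z;q)\tilde{U}_n(\Z;q)U_n(\Z;q)$ is where the argument breaks. Lemma~\ref{lemma:explicit.1} does place $M:=\diag(g^{-1},g_1,\dots,g_C,I_3,\dots,I_3)$ in $L_n(\Z;q)\tilde{U}_n(\Z;q)U_n(\Z;q)$, but the ``discrepancy'' between $M$ and $\diag(I_{n-3},g)$ is \emph{not} one block-diagonal factor from $U_n(\Z;q)$ and one from $L_n(\Z;q)$ positioned so as to merge into the outer factors. Your merging scheme amounts to writing $\diag(I_{n-3},g)=lMu$ with $l\in L_n(\Z;q)$ and $u\in U_n(\Z;q)$ block-diagonal; comparing the first $3\times 3$ diagonal blocks forces $l' g^{-1} u'=I_3$, i.e.\ $g=u'l'\in U_3(\Z;q)L_3(\Z;q)$, which fails for a general $g\in E(3,\Z;q)$ --- the image of the product map $U_3\times L_3\to\SL_3$ is a proper subvariety, and indeed if $E(3,\Z;q)$ equalled $U_3(\Z;q)L_3(\Z;q)$ there would be no need for the bounded-generation exponent $C$ in the first place. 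Equivalently, $\diag(I_{n-3},g)\,M^{-1}$ has the arbitrary element $g$ as its first diagonal block, so it cannot be a product of one unitriangular congruence factor of each sign. The position-normalizing conjugations by signed block permutations are a further problem: apart from the full order reversal, such conjugations preserve neither $U_n(\Z;q)$ nor $L_n(\Z;q)$, so they destroy the $L_n\cdots U_n$ frame supplied by Lemma~\ref{lemma:explicit.3}. As written, the three-factor corner inclusion is unproved and the corollary does not follow.

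The miscount that pushed you into this detour should also be flagged: the ``naive'' black-box combination does give the stated bound, and that is exactly the paper's proof. Take $m=3$ in Lemma~\ref{lemma:explicit.3} and handle the corner by Lemma~\ref{lemma:explicit.2} applied inside the bottom-right $3D\times 3D$ block, with $D$ from Proposition~\ref{claim:bdd.gen.3} (so $N=3D$ works and no divisibility analysis on $n$ is needed). This yields, suppressing the congruence level $q$, the inclusion $E(n,\Z;q)\subseteq L_nU_nL_n\,\tilde{U}_n\,U_nL_nU_n$. Now, instead of replacing each inner $U_n,L_n$ by $\tilde{U}_n$ termwise (which is what produces your $\tilde{U}_n^5$), use the identities $ul=l(l^{-1}ul)$ and $u_1lu_2=(u_1lu_1^{-1})u_1u_2$, which give $L_nU_nL_n\subseteq L_n\tilde{U}_n$ and $U_nL_nU_n\subseteq\tilde{U}_nU_n$; hence $E(n,\Z;q)\subseteq L_n(\tilde{U}_n)^3U_n$. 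So the obstacle you describe is illusory, no strengthening of Lemma~\ref{lemma:explicit.2} is required, and the strengthening you propose is not established by your argument.
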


\begin{proof}
Propositon \ref{claim:bdd.gen.3} implies that there exists a constant $D$ such that for every $q \in \Z$,
$(U_3(\Z;q)L_3(\Z;q))^D=\la U_3(\Z;q)L_3(\Z;q) \ra$.
Denote ${E}^*(3,\Z;q):=\{\diag(1,\ldots,1,g) \in \SL_{3D}(\Z) \mid g \in {E}(3,\Z;q)\}$.   
Lemma \ref{lemma:explicit.2}  shows that, for any $q$, $E^*(3,\mathbb{Z};q) \subseteq L_{3D}(\mathbb{Z};q)\widetilde{U}_{3D}(\mathbb{Z};q)U_{3D}(\mathbb{Z};q)L_{3D}(\mathbb{Z};q)$. 
Lemma \ref{lemma:explicit.3} implies that

$$E(n,\Z;q) \subseteq   L_{n}(\Z;q)U_{n}(\Z;q)L_{n}(\Z;q)\tilde{U}_{n}(\Z;q) U_{n}(\Z;q)L_{n}(\Z;q)U_{n}(\Z;q).$$

Since $L_{n}(\Z;q)U_{n}(\Z;q)L_{n}(\Z;q) \subset L_n(\mathbb{Z};q)\tilde{U}_n(\mathbb{Z};q)$ and $U_{n}(\Z;q)L_{n}(\Z;q)U_{n}(\Z;q) \subset \tilde{U}_n(\mathbb{Z};q)U_n(\mathbb{Z};q)$, we get the result.
\end{proof}

\begin{proof}[Proof of Theorems \ref{thm:intro.fix.w} and \ref{thm:intro.congruence.subgroup} (with explicit bounds)] Let $n \ge 3$. 
The proof of Theorem \ref{thm:intro.congruence.subgroup} shows that there is a non-zero $q \in \Z$ such that 
$U_n(\Z;q)$ and $L_n(\Z;q)$ are contained in $w(\SL_n(\Z))^{16}$. Since $w(\SL_n(\Z))$ is a normal subset, the set 
$\tilde{U}_n(\Z;q):=\{hkh^{-1} \mid k \in U_n(\Z;q) \ \wedge \ h \in \SL_n(\Z)\}$ is contained in $w(\SL_n(\Z))^{16}$. Corollary 
\ref{cor:explicit} implies that if $n$ is large enough then $E(n,\Z;q) \subseteq w(\SL_n(\Z))^{16\cdot 5}$. Since $E(n,\mathbb{Z};q)$ contains a congruence subgroup, we proved the bound in Theorem \ref{thm:intro.congruence.subgroup}. Finally, Lemma  \ref{lem:width.adelic.fix.w} implies that if $n$ is large enough then $\SL_n(\Z) \subseteq w(\SL_n(\Z))^{16\cdot 5+7}$.
\end{proof}

\end{document}